\definecolor{webgreen}{rgb}{0,.5,0}
\definecolor{webbrown}{rgb}{.8,0,0}
\definecolor{emphcolor}{rgb}{0.5,0.95,0.95}
\ifpdf \hypersetup{pdftex,
%             pdftitle={Decision Making with Poisson process},\UTF{0192}%
%             pdfauthor={Semih Sezer},
            pdfstartview=FitH, %%Fit, FitB, FitH
            bookmarksopen=true,
            bookmarksnumbered=true
} \else \hypersetup{dvips} \fi
\numberwithin{equation}{section}
\newtheorem{theorem}{Theorem}[section]
\newtheorem{proposition}{Proposition}[section]
\newtheorem{corollary}{Corollary}[section]
\newtheorem{remark}{Remark}[section]
\newtheorem{lemma}{Lemma}[section]
\newtheorem{assumption}{Assumption}[section]
\newtheorem{cond}{Condition}[section]
\numberwithin{remark}{section} \numberwithin{proposition}{section}
\numberwithin{corollary}{section}
\newcommand {\R}{\mathbb{R}}
\newcommand {\bR}{\mathbb{R}}
\newcommand {\cF}{\mathcal{F}}
\newcommand {\A}{\mathcal{A}}
\newcommand {\N}{\mathbb{N}}
\newcommand {\cL}{\mathcal{L}}
\newcommand {\cM}{\mathcal{M}}
\newcommand {\bN}{\mathbb{N}}
\newcommand {\p}{\mathbb{P}}
\newcommand {\bP}{\mathbb{P}}
\newcommand {\E}{\mathbb{E}}
\newcommand {\bE}{\mathbb{E}}
\newcommand{\diff}{{\rm d}}
\newcommand{\lev}{L\'{e}vy }
\begin{document}
\title{On stochastic control under {Poissonian intervention}: optimality of a barrier strategy in a general L\'evy model}

	\thanks{This version: \today.   }

\author[K. Noba]{Kei Noba$^*$}
\thanks{$*$\, Department of Fundamental Statistical Mathematics, The Institute of Statistical Mathematics, 
10-3 Midori-cho, Tachikawa, Tokyo 190-8562, Japan. Email: knoba@ism.ac.jp}
\author[K. Yamazaki]{Kazutoshi Yamazaki$^\dagger$}
\thanks{$\dagger$\, School of Mathematics and Physics, The University of Queensland, St Lucia, Brisbane, QLD 4072, Australia. Email: k.yamazaki@uq.edu.au}
%\date{}

\begin{abstract}  
We study a version of the stochastic control problem of minimizing the sum of running and controlling costs, where control opportunities are restricted to independent Poisson arrival times. Under a general setting driven by a general \lev process, we show the optimality of a periodic barrier strategy, which moves the process upward to the barrier whenever it is observed to be below it. The convergence of the optimal solutions to those in the continuous-observation case is also shown.
\ \ 
\\
\noindent \small{\noindent  {AMS 2020} Subject Classifications: 60G51, 93E20, 90B05 \\
\textbf{Keywords:} stochastic control, inventory models, periodic observations, mathematical finance, \lev processes}
\end{abstract}

\maketitle

\section{Introduction} \label{section_introduction}

%{cite: Albrecher,  Bauerle and Thonhauser: Optimal dividend payout in random
%discrete time 2011.}

Stochastic control aims to obtain an optimal dynamic strategy in cases of uncertainty. In its typical formulation, the problem reduces to obtaining an adapted control process that maximizes/minimizes the expected total reward/cost, which depends on the paths of the controlling and controlled processes. The continuous-time stochastic control research, active in various fields such as financial/actuarial mathematics and research on inventory models, has been developed along with stochastic analysis and differential equations theory. In contrast with its discrete-time counterpart, for which numerical approaches are typically required, various analytical approaches, such as It\^o calculus and first passage analysis, are available in continuous-time models to obtain explicit results.

%Stochastic control aims to obtain an optimal dynamic strategy in the existence of uncertainty. In its typical formulation, the problem reduces to obtaining an adapted control process that maximizes/minimizes the expected total reward/cost, which depends on the paths of the control and controlled processes.  The continuous-time stochastic control research, active in various fields such as financial/actuarial mathematics and inventory models, has been developed along with stochastic analysis and differential equations theory. Contrary to the discrete-time counterpart, for which numerical approaches are typically required, various analytical approaches  such as It\^o calculus and first passage analysis are available in continuous-time models to obtain explicit results.

{Poissonian observation/intervention models have been developed to explore the interface between continuous-time and discrete-time models. The earliest papers on this model include Wang \cite{H_Wang} and Dupuis and Wang \cite{Dupuis_Wang} for Brownian motion models. More recently, these results have been extended to spectrally one-sided L\'evy models, as discussed in works by, among others, \cite{ABT, Albrecher,Avanzi_Cheung_Wong_Woo,   Lkabous, PPSY,PPY, Perez_yamazaki_AMO, Zhang_Han, ZCY, ZWYC}. For a comprehensive survey on this subject, see Saarinen \cite{Saarinen} and the references therein.}
%In Poissonian observation models, 
In the Poissonian model, instead of allowing the decision maker to observe the state process continuously and control it at all times, these opportunities are given only at independent Poisson arrival times. 
%\blue{With the selection of exponential interarrival times, the problem often continues to be analytically tractable, thanks mainly to the memoryless property of the exponential random variables.} 
Although this assumption of Poisson arrivals is indeed restrictive in real applications, it provides a more flexible approach for approximating the discrete-time counterpart (with deterministic interarrivals) than the classical continuous-time model. As confirmed numerically in studies such as \cite{LYZ2}, approximation via Poisson arrivals (as a special case of {Erlangization} \cite{Carr_random,Dong, KP}) often achieves accurate approximation of the discrete-time model in stochastic control problems.  % We refer the reader to \cite{Czarna, YSL2, YSL} concerning the delays  in the so-called Parisian ruin model with restricted observation times, closely related to the Poissonian observation models.
 %, closely related to the random observation models.

%Poissonian observation models have been developed to explore the interface between continuous-time and discrete-time models (see TODO). 
%Instead of allowing the decision maker to observe the state process continuously and control it at all times, these opportunities are given only at independent Poisson arrival times. With the selection of exponential interarrival times, the problem often continues to be analytically tractable, mainly thanks to the  memoryless property of the exponential random variables. Although this assumption of Poisson arrivals is indeed restrictive in real applications,  it provides a more flexible approach to approximate the discrete-time counterpart (with deterministic interarrivals) than the classical continuous-time model.  As confirmed numerically in the literature such as TODO, approximation via Poisson arrivals (as a special case of Erlanization TODO) often achieves accurate approximation to the discrete-time model in stochastic control problems. 

This paper studies the classical stochastic control problem, described as follows. 
Given a stochastic process $X = \{ X_t: t \geq 0 \}$, the objective is to choose a strategy $\pi = \{R_t^\pi : t \geq 0\}$ to  minimize the total expected values of the running cost $\int_0^\infty e^{-qt} f(U_t^\pi) \diff t$ and the controlling cost  $\int_{[0, \infty)} e^{-qt} \diff R_t^\pi$, where $U^\pi := X+R^\pi$ is the controlled process when  $\pi$ is applied.  {More precisely, we want to minimize over $\pi$ the expected sum
$v_\pi (x):=\E_x \left[ \int_0^\infty e^{-qt}f(U^\pi_t)\diff t + C\int_{0}^\infty e^{-qt} \diff R^\pi_t \right]$ for $C \in \mathbb{R}$.}
{This framework enables the modeling of various optimization scenarios by suitably selecting the process $X$.}
%{Various problems can be modeled in this framework by appropriately choosing the process $X$. TODO} 
See \cite{Bensoussan_Tapiero,Benkerouf_Bensossan,Bensoussan_Liu_Sethi} for inventory models and \cite{Cai, Jeanblanc, Mundaca} for financial applications.

This problem has been studied in several papers when $X$ is a spectrally negative \lev process (i.e., a \lev process with only negative jumps). Under the assumption that the running cost function is convex, the barrier strategy, with the lower barrier $b^*$ selected to be a unique root of
\begin{align}
\bE_b \left[\int_0^\infty e^{-qt} f^\prime_+ (U^b_t) \diff t \right] + C = 0, \label{opt_b_review}
\end{align}
is  optimal. Here, $U^b$ is the reflected process starting at $b$.
 %with a lower barrier $b$, which is also called a draw-up or an excursion in the literature. 
 {Interestingly, this optimality result continues to hold in different formulations with additional constraints on the admissible strategies.}
%Interestingly, this 
%optimality result continuous to hold under non-standard formulations. 
In a version where $R^\pi$ is restricted to being absolutely continuous with respect to the Lebesgue measure with a given density bound \cite{Hernandez}, the same optimality result holds, with $U^b$ being the so-called refracted processes \cite{Kyp_loeffen,NobPerezYamazaki, Perez_yamazaki_yu,Wang, Wang_Wang_Chen, YSL}. The Poissonian observation version we consider in this paper has been solved by \cite{Perez_Yamazaki_Bensoussan} for the spectrally negative case. In this case, $U^b$ is a version of the reflected process that is pushed to $b$ whenever it is observed to be below it. By selecting the barrier using \eqref{opt_b_review}, this version of the barrier strategy, which we call the periodic barrier strategy,  has been shown to be optimal.

The results described above all rely on the so-called scale function (see \cite{Ber1996,Egami_Yamazaki, Kyp2014, Yam2015}), which makes sense only for spectrally one-sided \lev processes. 
%The expected cost under barrier strategies can be written concisely via the scale function, with which the smoothness of the value function and its optimality can be verified in a straightforward way. 
However, the spectrally negative assumption is often unrealistic in real applications. For example, financial asset prices are empirically known to have both positive and negative jumps (see \cite{CGMY}); also, water storage levels of dams experience both positive and negative jumps, due to rainfall and surges in consumption. See also the introduction of \cite{CLW} for the application of processes of two-sided jumps in modeling the surplus of an insurance company. % actuarial science.

%The results described above all rely on the so-called scale function (see TODO), which makes sense only for spectrally one-sided \lev processes. The expected cost under barrier strategies can be written concisely via the scale function, with which the smoothness of the value function and its optimality can be verified in a straightforward way.
%However, the spectrally negative assumption is often unrealistic in real applications. For example, financial asset prices are empirically known to have both positive and negative jumps (see TODO). Water storage levels of dams experience both positive and negative jumps due to rainfalls and surge of consumption.

Although the existing results for a general \lev process in stochastic control are significantly limited in comparison with diffusion and spectrally one-sided \lev models, the problem described above has recently been solved for a general \lev process in the continuous-observation setting. Noba and Yamazaki \cite{NobYam2020} have shown that the classical barrier strategy described in  \eqref{opt_b_review} continues to be optimal even in the presence of positive jumps. It is thus a natural conjecture that the form of optimal strategy is invariant to the existence of upward jumps.
% and the same optimality result holds more generally, even under non-standard formulations.
%Although the existing results for a general \lev process in stochastic control are significantly limited in comparison to diffusion and spectrally one-sided \lev models, the problem described above has recently been solved for a general \lev process in the continuous-observation setting. It has been shown, in  \cite{NobYam2020},  that the classical barrier strategy described with \eqref{opt_b_review} continues to be optimal even with the presence of positive jumps.   It is thus a natural conjecture that the form of optimal strategy is invariant to the existence of upward jumps and the same optimality result holds more generally, even under non-standard formulations.
The objective of this paper is to verify this conjecture. We solve the Poissonian observation case for a general \lev process $X$ with both positive and negative jumps, generalizing the results of \cite{Perez_Yamazaki_Bensoussan} and \cite{NobYam2020} simultaneously and provides a unified way of expressing the optimal strategy.  Despite the obvious difficulty over the continuous observation model for which many analytical results are available for classical reflected processes, we provide a more concise proof than those given in \cite{NobYam2020}.

The remainder of the paper is organized as follows. In Section \ref{Sec02}, we formally define the problem under consideration. In Section \ref{section_barrier}, we define periodic barrier strategies and obtain their key properties. Then, in Section \ref{section_optimality}, we select the barrier and demonstrate its optimality. In Section \ref{section_convergence}, we show the convergence to the results in the classical setting as the rate of observation approaches infinity. These results are confirmed with numerical experiments in Section \ref{section_numerics}. {Finally, we conclude the paper in Section \ref{section_concluding_remarks}.} {Some proofs are deferred to the appendix. Throughout the paper, we let $g^\prime_+ (\cdot)$ and $g^\prime_- (\cdot)$ be the right-hand and left-hand derivatives of any function $g$ whenever they make sense.}

%The rest of the paper is organized as follows. In Section \ref{Sec02}, we formally define the problem under consideration. In Section \ref{section_barrier}, we define periodic barrier strategies and obtain their key properties. Then, in Section \ref{section_optimality}, we select the barrier and demonstrate the optimality. In Section \ref{section_convergence}, we show the convergence as the rate of observation goes to infinity to the ones in the classical setting. These results are confirmed with numerical results in Section \ref{section_numerics}. We conclude the paper in Section \ref{section_concluding_remarks}.

\section{Problem}\label{Sec02}

%\subsection{Problem}\label{Sec201}
%\blue{[Just delete this paragraph?]} In this paper, we consider an inventory model where the cumulative demand $D = \{ D_t^\pi: t \geq 0 \}$ follows a general \lev process. \blue{[let's discuss alternative word for inventory]}
%With its initial value $x \in \R$, the inventory (in the absence of control) follows a {(one-dimensional)} \lev process
%\[
%X_t := x - D_t, \quad t \geq 0.
%\]

%\blue{[rephrased the paragraph.]}
%\blue{[changed some to differentiate from the MOR paper]}
Let  $X=\{X_t : t \geq 0 \}$  be a (one-dimensional) \lev process defined on a probability space $(\Omega , \mathcal{F}, \p)$. 
For $x\in\R$, we denote by $\p_x$ the law of $X$ when its initial value is $x$, and write $\mathbb{P} = \mathbb{P}_0$ for the case $x = 0$.  Let $\Psi$ be the \emph{characteristic exponent} of $X$; i.e.\
$e^{-t\Psi(\lambda)}=
\E \left[e^{i\lambda X_t}\right]$, $\lambda \in \R$ and $t\geq 0$.
It is known to admit the form 
\begin{align}
\Psi (\lambda) := -i\gamma\lambda +\frac{1}{2}\sigma^2 \lambda^2 
+\int_{\R \backslash \{0\} } (1-e^{i\lambda z}+i\lambda z1_{\{|z|<1\}}) \Pi(\diff z) , \qquad \lambda\in\R, \label{202a}
\end{align}
for some $\gamma\in\R$, $\sigma\geq 0$, and a \lev measure $\Pi $ on $\R \backslash \{0\}$ satisfying
$\int_{\R\backslash \{ 0\}}(1\land z^2) \Pi (\diff z) < \infty$. 
%\blue{[maybe ok to delete the following sentence.]} Recall that the process $X$ has bounded variation paths if and only if $\sigma= 0$ and $\int_{|z|<1} |z|\Pi(\diff z)<\infty$. When this holds, we can write
%\begin{align}
%\Psi(\lambda) := -i\delta \lambda +\int_{\R \backslash \{0\} }  (1-e^{i\lambda z})\Pi (\diff z),  \label{204}
%\end{align}
%where $\delta := \gamma-\int_{{(-1,1) \backslash \{0\}}}z \Pi(\diff z)$.

We consider a {version of the} stochastic control problem 
%with Poissonian controlling opportunities 
defined as follows.
%} control problem with proportional replenishment costs (without fixed costs).  
%We assume that the inventory is replenished at
{The set of control opportunities}
\[\mathcal{T}_{\eta}:= \{T(k): k\in\N \} \] {are} given by the arrival times  of a Poisson process $N^{\eta}=\{N^{\eta}_t : t\geq 0\}$ with intensity $\eta>0$ which is independent of $X$. 
{In other words, {the interarrival times} $\{T(k)-T(k-1): k \in\bN \}$ are an i.i.d.\ sequence of exponential random variables with intensity $\eta$ where {we let} $T(0)=0$ {for notational convenience}.} 
Let $\mathbb{F} := \{ {\mathcal{F}_t}: t \geq 0 \}$ be the natural filtration generated by $(X, N^{\eta})$. %\blue{[change $r$ to $\lambda$? $r$ is often used for discount in finance.]}
%{[Since $\lambda$ is used as a variable, I have replaced it with $\eta$. ]}
A \emph{strategy}, representing the cumulative amount of {controlling}, $\pi=\{ R^\pi_t: t\geq 0\}$ is a
% nondecreasing, right-continuous, and $\mathbb{F}$-adapted process starting at $R^{\pi}_{0-}=0$ \blue{[ok to delete right-continuous and  $R^{\pi}_{0-}=0$ because they are clear from the following form?]}{[Non-decrease and $\mathbb{F}$-adapted are also almost obvious, so may as well be deleted]} which is of the form
 {process of the form}
 \begin{align}
 R^\pi_t = {\int_{[0,t]} \nu^\pi_s \diff N^{\eta}_s } {= \sum_{0 \leq s \leq t: N^\eta_s \neq N^\eta_{s-} } \nu_s^\pi}\qquad t\geq 0,  \label{A001}
 \end{align}
 for some c\`agl\`ad {(left-continuos with right limits)} and non-negative $\mathbb{F}$-adapted process $\nu^\pi=\{\nu^\pi_t : t\geq 0\}$, {where it is understood that $N^\eta_{0-}=0$.} 
{The corresponding controlled process becomes}
\begin{align}
U^\pi_t=X_t +R^\pi_t,\quad t\geq 0. 
\end{align}
{We focus on the case where one can control the state process in one direction, and hence $\nu_s^\pi \geq 0$ a.s.\ for all $\pi \in \mathcal{A}$, which is standard as in \cite{Bensoussan_Tapiero, Benkerouf_Bensossan, Bensoussan_Liu_Sethi, Hernandez}. Such an assumption is applicable in many inventory models where only replenishment is allowed, as well as in dam management scenarios where the water level can only be decreased by the decision maker.}

\par
%\blue{[changed sentences to differentiate from MOR paper]} 
For a given discount factor $q > 0$ and initial value $x \in \R$, the objective is to minimize %the net present value of total costs
\begin{align*}
v_\pi (x):=&\E_x \left[ \int_0^\infty e^{-qt}f(U^\pi_t)\diff t + C\int_{0}^\infty e^{-qt} \diff R^\pi_t \right] \\
=&{\E_x \left[ \int_0^\infty e^{-qt}f(U^\pi_t)\diff t + C\sum_{0 \leq t < \infty: N^\eta_t \neq N^\eta_{t-} } e^{-qt } \nu_t^\pi \right],}
\end{align*}
%\blue{[$\int_{[0, \infty)}$ can be changed to $\int_0^\infty$ because $R$ does not jump at time zero?]}
%{[I think it is fine.]}
which is the sum of running cost for a given measurable function $f : \R \to\R$ and a controlling cost/reward for a unit cost/reward $C \in \R$ {(cost if {it is} positive and reward if negative)}.  Let $\A$ be the set of all admissible strategies satisfying the constraints described above as well as {the integrability condition:}
\begin{align}
\E_x &\left[ \int_0^\infty e^{-qt}{|f(U^\pi_t)|}\diff t + \int_0^\infty e^{-qt} \diff R^\pi_t \right] \\
&\qquad \qquad {=
\E_x \left[ \int_0^\infty e^{-qt}{|f(U^\pi_t)|}\diff t + \sum_{0 \leq t < \infty: N^\eta_t \neq N^\eta_{t-} } e^{-qt } \nu_t^\pi \right]}
< \infty. \label{1}
\end{align}
%
%We fix a discount factor $q > 0$ and a unit cost/reward of controlling $C \in \R$ {(cost if {it is} positive and reward if negative)}. Associated with each {strategy} $\pi \in \A$, the cost of inventory is modeled by $\int_0^\infty e^{-qt} f(U^\pi_t)\diff t $ for a measurable {inventory cost} function $f : \R \to\R$
% and that of controlling is given by $C\int_{[0,\infty)} e^{-qt}\diff R^\pi_t$. The problem is to minimize their expected sum
%\begin{align*}
%v_\pi (x):=\E_x \left[ \int_0^\infty e^{-qt}f(U^\pi_t)\diff t + C\int_{[0, \infty)}e^{-qt} \diff R^\pi_t \right], \quad x\in\R, 
%\end{align*}
%over the set of all admissible {strategies} $\A$ that satisfy all the constraints described above and {the integrability condition:}
%\begin{align}
%\E_x \left[ \int_0^\infty e^{-qt}{|f(U^\pi_t)|}\diff t + \int_{[0, \infty)}e^{-qt} \diff R^\pi_t \right]< \infty,\quad x\in\R. \label{1}
%\end{align}
{The aim of the problem} is to obtain the {\emph{(optimal) value function} }
\begin{align}
v(x):=\inf_{\pi\in\A}v_\pi(x),\quad x\in\R, 
\end{align}
and {an} optimal {strategy} $\pi^\ast$ {such that $v_{\pi^*}(x) = v(x)$ (if such a {strategy} exists)}.

%\subsection{Standing assumptions}

For the {running cost} function $f$, the unit cost/reward $C$ and the \lev process $X$, we impose the same conditions as {those assumed in }\cite{NobYam2020}; {similar conditions are} commonly assumed in the literature (see \cite{Benkerouf_Bensossan, Bensoussan_Liu_Sethi, Hernandez, Yam2017}).

%Throughout the paper, we assume the following on the function $f$ {and the unit cost/reward $C$}.  Note that this is commonly assumed in the literature (see, e.g., \cite{Benkerouf_Bensossan, Bensoussan_Liu_Sethi, Hernandez, Yam2017}).

\begin{assumption}[Assumption on $f$ and $C$] \label{Ass201}
%We assume the following throughout the paper:
\begin{enumerate}
\item[(1)] The function $f$ is convex.
\item[(2)] %The function $f$ has at most polynomial growth in the tail.  That is to say, t
There exist $k_1$, $k_2 {> 0}$ and $N\in\N$ such that $|f(x)|\leq k_1+k_2 {|x|^N}$ for all $x \in\R$. 
\item[(3)]  We have $f^\prime_+ (-\infty)<-Cq< f^\prime_+(\infty)$ where $f^\prime_+(-\infty):=\lim_{x\to-\infty} f^\prime_+ (x)\in[-\infty,\infty)$ and  $f^\prime_+(\infty):=\lim_{x\to\infty}f^\prime_+(x)\in(-\infty,\infty]$. % {which exist by (1).} 
\end{enumerate}
\end{assumption}

{
\begin{remark} 
Examples of $f$ satisfying the above assumptions include classical examples such as $f(x) = x^2$ and $f(x) = |x|$, as well as asymmetric functions used for our numerical examples \eqref{examples_f} in Section \ref{section_numerics}.
%Examples of $f$ satisfying the above assumptions include classical examples such as $f(x) = x^2$ and $f(x) = |x|$, as well as asymmetric functions as used for our numerical examples in Section \ref{section_numerics}.
\end{remark}
}
Note that the right- and left-hand derivatives $f^\prime_+ (x)$ and $f^\prime_- (x)$, respectively, for all $x \in \R$ as well as their limits  are well-defined by Assumption \ref{Ass201}(1). {Assumption \ref{Ass201}(3) is necessary to avoid the optimality of a trivial strategy {and the case optimal strategy does not exist}; see \cite[Remark 1]{NobYam2020}.} {More precisely, when this assumption is violated, the optimal strategy is never to modify the process or to move the process to an arbitrarily large value.}

\begin{assumption}[Assumption on $X$]  \label{Ass202}
%We assume the following regarding the \lev process $X$.
\begin{enumerate}
\item[(1)] 
%The process
 $X$ is not a {(driftless)} compound Poisson process. 
%\blue{[later we can relax this by taking limits?]}{[I added a remark.]}
\item[(2)] For some $\bar{\theta}>0$,
$\int_{\R\backslash(-1,1)}e^{\bar{\theta} |z|}\Pi(\diff z) < \infty$. 
%\label{tail_assump}
\end{enumerate}
\end{assumption}

\begin{remark} \label{remark_mass} %\blue{[added this remark]}
By Assumption \ref{Ass202}(1) and \cite[Proposition I.15]{Ber1996}, the potential measure of $X$ has no atoms. This also shows $\bP_x (X_{T(1)}= b)
={\eta}\bE_x \left[ \int_0^\infty e^{-{\eta}t} 1_{\{X_t =b\}} \diff t\right]=0$ for all $x,b \in \R$.
\end{remark}

Assumption \ref{Ass202}(2) together with \cite[Theorem 3.6]{Kyp2014} guarantees {the finiteness of} $\E [ \exp (\bar{\theta} {|X_1|})]$ and {also that of $\E \left[|X_1|\right]$ (since $\exp(x) \geq x$ for $x\geq0$).}
%, we also have $\E \left[|X_1|\right]<\infty$.

\begin{remark}\label{Rem201} %\blue{[rephrased]}
From Assumptions \ref{Ass201}(2) and \ref{Ass202}(2) and by the proof of \cite[Lemma 11]{Yam2017}, {the expectation}
$\E_x \left[\int_0^\infty e^{-qt} |f(X_t)|\diff t \right]$ is finite %for all $x \in\bR$ and the map
%$
%x\mapsto \bE_x \left[\int_0^\infty e^{-qt} |f(X_t)|\diff t \right]
%$
and it is at most of polynomial growth as $x\uparrow \infty$ and $x\downarrow-\infty$. 
\end{remark}

%\blue{[deleted the remark about assumption of compound Poisson]}
%\begin{remark}
%\blue{[under construction] We assume Assumption \ref{Ass202} (1) for the brevity of the paper. The main result (Theorem \ref{Thm501}) remains to hold even for the case $X$ is a driftless compound Poisson process. This can be shown by approximating using a convergent sequence of drifted compound Poisson processes for which the main result holds by following the arguments given in Section {5} of \cite{NobYam2020}.}
%{In the classical barrier case, the proof of the approximation was divided into cases where $X$ is a negative subordinator and other cases, but this time there is no need for such a case separation and the argument of the approximation is simpler. }
%
%
%%The main result, which we will write later, holds even without Assumption \ref{Ass202} (1). We will discuss this in {Section \ref{Sec502}}. %\blue{[We will make B later?]}{[I have a mistake. I changed from B to Section \ref{Sec502}.]}
%\end{remark}
%{As addressed in the introduction, our key tools are the expressions of the derivatives of the expected costs under the barrier strategy in terms of this random variable. }

\section{Periodic barrier strategies} \label{section_barrier}

%\blue{[rephrased the whole section.]}

Our objective is to show the optimality of a periodic barrier strategy $\pi^b$ for a suitable selection of the barrier {$b \in \R$} in the considered stochastic control problem.

\begin{figure}[htbp]
\begin{center}
\begin{minipage}{1.0\textwidth}
\centering
\begin{tabular}{c}
 \includegraphics[scale=0.7]{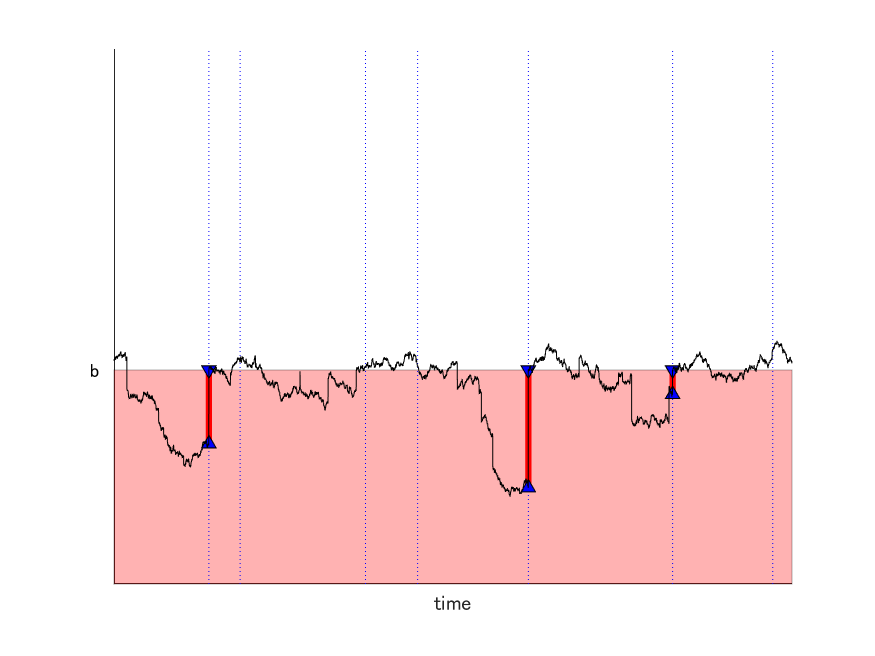}
\end{tabular}
\vspace{-4mm}
\caption{Sample path of $U^b$.
%%The solid black trajectory shows the path of $X$ and the piecewise horizontal blue lines show player $P$'s most recent %information on $X$
%%{exercise opportunity, whose} 
Control opportunities $\mathcal{T}_{\eta}$ are shown by dotted vertical lines.  The control times $\{T_b^{(n)}: n\in\bN \}$ and control sizes $\Delta R^b$ are indicated by the vertical red lines.} 
\label{plot_illustration}
\end{minipage}
\end{center}
\end{figure}

Fix $b \in \R$. A periodic barrier strategy $\pi^b$ pushes upward the process to $b$ whenever it is observed to be below $b$ {(see Figure \ref{plot_illustration})}. The epochs of controlling $\{T_b^{(n)}: n\in\bN \} \subset{\mathcal{T}_{\eta}}$  are given by %More formally, it is uniquely defined by the controlling times given as 
%An intuitive definition of the strategy is via a recursive algorithm.
%%We consider the periodic barrier strategy which replenishes any shortage below $b$ at the Poissonian opportunities $\mathcal{T}_{{\eta}}$, which can be recursively defined as follows.
%The strategy with the barrier $b \in \R$ pushes the process up to $b$ at 
a sequence of  $\mathbb{F}$-stopping times, recursively defined as follows: {with $T_b^{(0)} :=0$,
%\begin{align}
%\begin{aligned}
%&T_b^{(0)}={0}, \quad T_b^{(1)} = \inf\{t > 0 : {X_t} <b, N^{{\eta}}_t \neq N^{{\eta}}_{t-}\},\\
%T_b^{(n)} =& \inf\{t >T_b^{(n-1)}:b+(X_t-X_{T_b^{(n-1)}})<b, N^{{\eta}}_t \neq N^{{\eta}}_{t-}\},\quad n\geq 2.
%\end{aligned}
%\label{32}
%\end{align}
%{[I think it is easier to see if we write it in terms of $\mathcal{T}_{{\eta}}$. How about
\begin{align}
\begin{aligned}
%&T_b^{(0)}={0}, \quad 
%T_b^{(1)} &= \inf\{t \in \mathcal{T}_{\eta}: {X_t} <b \},\\
T_b^{(n)} &= \inf\{t \in \mathcal{T}_{\eta}: t >T_b^{(n-1)}, 
\tilde{X}_t^{(n-1),b}
%b+(X_t-X_{T_b^{(n-1)}})
<b\},\quad n\geq 1,
\end{aligned}
\label{32}
\end{align}
where 
\[
\tilde{X}_t^{(m),b} := 
\begin{cases}
X_{t},       & m = 0,\\
b+(X_t-X_{T_b^{(m)}}), & m \geq 1,
\end{cases}
\] 
is a parallel shift of $X$ so that it starts from $b$ at $T_b^{(m)}$ when $m \geq 1$. The strategy $\pi^b$ modifies $X$ by adding at $T_b^{(n)}$ the shortage $b - \tilde{X}_{T_b^{(n)}}^{(n-1),b}$ so that the path of the controlled process is the concatenation of $(\tilde{X}^{(m),b})_{m \geq 1}$. The corresponding control and controlled processes can be written, respectively,
\begin{align}
R_t^b &:= R_t^{\pi^b} = \sum_{n = 1}^\infty (b - \tilde{X}_{T_b^{(n)}}^{(n-1),b}) 1_{\{ T_b^{(n)} \leq t \}}, \\
U^b_t &:= U^{\pi^b}_t = X_t + R^b_t
= \sum_{n=1}^\infty \tilde{X}_t^{(n-1),b} 1_{\{  t\in [T^{(n-1)}_b, T^{(n)}_b )\}}.
%\begin{cases}
%X_{t} , \qquad         &\qquad t \in [ 0 , T^{(1)}_b ),\\
%\tilde{X}_t^{(n-1),b} ,\qquad   & t\in [T^{(n-1)}_b, T^{(n)}_b )\text{ with } n\geq 2 .
%\end{cases}
\end{align}
%becomes {what we call} the L\'evy process with %a lower barrier $b$. 
%Parisian reflection at $b$. %\blue{lets think later about how to call this.  }
{Note that $X$ and $N^\eta$ do not jump at the same time.}

%]}

Alternatively, in terms of the c\`agl\`ad {$\mathbb{F}$-adapted process} $\nu^b=\{\nu^b_t : t\geq 0\}$ with 
\begin{align}
\nu^b_t =
\begin{cases}
\left(b - X_{t-}\right)^+, \qquad         &\qquad t \in [ 0 , T^{(1)}_b ],\\
{\left( X_{T_b^{(n-1)}}- X_{t-}\right)^+, } \qquad   & t\in(T^{(n-1)}_b, T^{(n)}_b ]\text{ with } n\geq 2 ,
\end{cases}
\end{align}
where $x^+ := x \vee 0$ and it is understood that $X_{0-}=X_0$, 
%\blue{[do we need this?]}
%{[I have included this sentence to prove that periodic barrier strategies satisfies \eqref{A001}. We might as well pull this out as the paper might be better summarised in a shorter form.]}
%the strategy $\pi^b$ can be written as
it can be also written
\begin{align}
R^b_t =  {\int_{[0,t]}} \nu^b_s \diff N^{\eta}_s, \qquad t\geq 0.  \label{R_b_nu}
\end{align}
%{[It would be better to write $\int_{[0, t]}$ to emphasise that $t$ is included in the integral interval. ]}
%\blue{Clearly, $\nu^b$ is c\`agl\`ad and $\F$-adapted, and hence it is a feasible }.
%For simplicity, we write $R^b$ for $R^{\pi^b}$. 

{
\begin{remark} \label{remark_simple_expression_R}
{
In terms of the minimum of $X$ observed until time $t$, we can also write
\[
R_t^b = \max_{1 \leq k \leq N_t^\eta} (b-X_{T(k)})^+, \quad t \geq 0,
\]
%{[Would it be simpler and better to use this as the definition?]}
and  $T^{(n)}_b$ as the $n$-th jump time of $R^b$. This expression will be used to show the convergence to the classical case in Section \ref{section_convergence}.}
\end{remark}
}

%\blue{[moved here]} 
For the rest of the paper, {we denote the expected}
%we let the expected 
total cost under the periodic barrier strategy $\pi^b$ {by} 
%be written
\begin{align}
v_b (x):= v_{\pi^b} (x), \quad x \in \R.
\end{align} 

%As is commonly pursued in classical singular control problems,  our objective is to show the optimality of a barrier strategy
%$\pi^b$ for some $b\in\R$. 

%{It is is given in Appendix \ref{SecA01}.  }

{
%We conclude this section with the following three lemmas whose proofs are given in Appendix \ref{appendix_proof}. In particular, 

%\blue{[I made separate lemmas for integrability and admissibility. I also moved the lemma for the integrability of the derivative here.]}

{We now show the admissibility of periodic barrier strategies along with related results.  The proof of the following lemma is deferred to Appendix \ref{SecA01}.
\begin{lemma}\label{Lem301}
For $x,b \in \R$, (i) $\E_x\left[ \int_0^\infty e^{-qt} |f(U^b_t)|\diff t   \right] < \infty$ and (ii) $\E_x\left[ \int_0^\infty e^{-qt}  \diff R^b_t  \right] < \infty$, {(iii) ${x \mapsto v_{b}(x)}$ is at most of polynomial growth.}
%For $b\in\bR$, the strategy $\pi^b$ is admissible. 
\end{lemma}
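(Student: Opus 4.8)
The plan is to reduce all three statements to standard polynomial moment estimates for $X$ and its running infimum, exploiting the explicit description of $R^b$ in Remark~\ref{remark_simple_expression_R}. Set $\underline{X}_t := \inf_{0 \leq s \leq t} X_s$ and $\overline{|X|}_t := \sup_{0 \leq s \leq t}|X_s|$. By Remark~\ref{remark_simple_expression_R}, $R^b_t = \max_{1 \leq k \leq N^{\eta}_t}(b - X_{T(k)})^+$, whence $0 \leq R^b_t \leq (b - \underline{X}_t)^+ \leq |b| + |\underline{X}_t|$; since $U^b = X + R^b$, also $|U^b_t| \leq |X_t| + |b| + |\underline{X}_t|$. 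Under $\p_x$, $|\underline{X}_t| \leq |x| + \overline{|X'|}_t$ where $X'_t := X_t - x$ is a \lev process started from $0$, so it suffices to control $\E_x[|X_t|^N]$ and $\E_0[\overline{|X'|}_t^{\,N}]$ for the exponent $N$ in Assumption~\ref{Ass201}(2).

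For (ii): $R^b$ is non-negative, non-decreasing, piecewise constant with $R^b_0 = 0$ (since $T(1) > 0$ a.s.), and $e^{-qt}R^b_t \to 0$ a.s.\ because $R^b_t \leq |b| + |x| + \overline{|X'|}_t$ grows at most linearly a.s.\ (strong law of large numbers for $X'$, using $\E|X_1| < \infty$). Integration by parts then gives $\int_{[0,\infty)} e^{-qt}\diff R^b_t = q\int_0^\infty e^{-qt}R^b_t\diff t$, and by Tonelli $\E_x\big[\int_{[0,\infty)} e^{-qt}\diff R^b_t\big] = q\int_0^\infty e^{-qt}\E_x[R^b_t]\diff t \leq q\int_0^\infty e^{-qt}\big(|b|+|x|+\E_0[\overline{|X'|}_t]\big)\diff t$. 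For (i) and (iii): Assumption~\ref{Ass201}(2) gives $|f(U^b_t)| \leq k_1 + k_2|U^b_t|^N \leq k_1 + c_N\big(|X_t|^N + |b|^N + |x|^N + \overline{|X'|}_t^{\,N}\big)$ for a constant $c_N$, so $\E_x\big[\int_0^\infty e^{-qt}|f(U^b_t)|\diff t\big] \leq k_1/q + c_N\int_0^\infty e^{-qt}\big(\E_x[|X_t|^N] + |b|^N + |x|^N + \E_0[\overline{|X'|}_t^{\,N}]\big)\diff t$, while $|v_b(x)| \leq \E_x\big[\int_0^\infty e^{-qt}|f(U^b_t)|\diff t\big] + |C|\,\E_x\big[\int_{[0,\infty)} e^{-qt}\diff R^b_t\big]$. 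Thus (i), (ii) and (iii) all follow once $\E_x[|X_t|^N]$ and $\E_0[\overline{|X'|}_t^{\,N}]$ are shown to be at most polynomial in $(t,x)$ jointly.

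For the moment estimates, recall (as noted below Assumption~\ref{Ass202}) that \cite[Theorem 3.6]{Kyp2014} yields finiteness of all exponential, hence all polynomial, moments of $X_1$; by the infinitely divisible structure $\E_0[|X_t|^N] \leq c(1+t^N)$, so $\E_x[|X_t|^N] = \E_0[|x+X_t|^N] \leq 2^{N-1}(|x|^N + \E_0[|X_t|^N])$ is polynomial in $(t,x)$. For the supremum, write $X'_t = M_t + \mu t$ with $\mu := \E_0[X_1]$ and $M$ a zero-mean \lev martingale which is square integrable (Assumption~\ref{Ass202}(2) forces $\int_{\R\backslash\{0\}} z^2\Pi(\diff z) < \infty$); then $\overline{|X'|}_t \leq \sup_{s\leq t}|M_s| + |\mu|\,t$, Doob's $L^N$ maximal inequality (for $N=1$ combined with the $L^2$ bound and the Cauchy--Schwarz inequality) bounds $\E_0[\sup_{s\leq t}|M_s|^N]$ by a constant times $\E_0[|M_t|^N]$, and $\E_0[|M_t|^N]$ is a polynomial in $t$ (Rosenthal-type estimate for infinitely divisible laws). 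Hence $\E_0[\overline{|X'|}_t^{\,N}] \leq c(1+t^N)$, and since $q>0$, integrating $e^{-qt}$ against any polynomial in $t$ is finite with the resulting bounds polynomial in $x$. This completes all three parts.

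The step I expect to be the main obstacle is this last moment control of the running supremum: passing from exponential integrability of the \lev measure to polynomial-in-$t$ bounds on $\E_0[\sup_{s\leq t}|X_s|^N]$ needs the martingale decomposition together with a maximal inequality rather than a one-line estimate (alternatively, the estimates embedded in the proof of \cite[Lemma 11]{Yam2017} may be invoked directly). The rest is bookkeeping: the integration-by-parts identity and the a.s.\ decay $e^{-qt}R^b_t \to 0$ for (ii), the translation argument for the dependence on $x$, and the reduction of (i) and (iii) to these moments via Assumption~\ref{Ass201}(2).
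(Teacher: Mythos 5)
Your proof is correct, but it takes a different route from the paper. You work directly from the representation $R^b_t=\max_{1\leq k\leq N^\eta_t}(b-X_{T(k)})^+$ to get $R^b_t\leq(b-\underline{X}_t)^+$ and $|U^b_t|\leq|X_t|+|b|+|\underline{X}_t|$, and then reduce (i)--(iii) to polynomial-in-$(t,x)$ bounds on $\E_x[|X_t|^N]$ and $\E_0[\sup_{s\leq t}|X_s-x|^N]$, which you prove from scratch via the martingale-plus-drift decomposition, Doob's maximal inequality and Rosenthal-type moment bounds (all legitimate under Assumption \ref{Ass202}(2), which indeed gives $\int z^2\,\Pi(\diff z)<\infty$ and all polynomial moments), together with integration by parts and the a.s.\ linear growth of $R^b$ for (ii). The paper instead dominates by the \emph{classical} reflected pair: $X_t\leq U^b_t\leq U^{b,\infty}_t$ and $0\leq R^b_t\leq R^{b,\infty}_t$, uses convexity of $f$ to sandwich $|f(U^b_t)|\leq|f(X_t)|+|f(U^{b,\infty}_t)|+c$, and then simply imports the known finiteness and polynomial-growth statements for $X$ and $U^{b,\infty}$ from Remark \ref{Rem201} (via \cite{Yam2017}) and \cite[Lemma 3, (A.3)]{NobYam2020}. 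The trade-off: your argument is self-contained and uses only the polynomial bound of Assumption \ref{Ass201}(2) plus standard maximal/moment inequalities (essentially redoing estimates already embedded in the cited works, as you anticipate), whereas the paper's convexity-sandwich argument is shorter and transfers the classical-case results to the Poissonian setting with no new moment computations. Both are valid; just make sure, if you keep your version, to spell out the $N=1$ case of the maximal inequality (via the $L^2$ bound) and the odd-$N$ moment bound (via Jensen or Rosenthal), which you gesture at but do not write out.
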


%\blue{[added the following.]}
%\blue{Regarding the growth condition of $v_{b^*}$, we have the following. The proof is deferred to Appendix \ref{proof_lemma_poly_growth}.
%\begin{lemma} \label{lemma_poly_growth} The function 
%\end{lemma}
%}

As a corollary of the above, we also have the following. Thanks to Assumption \ref{Ass201}(1), this can be shown exactly in  the same way as the proof of \cite[Lemma 4]{NobYam2020} and thus we omit the proof.
% Thus, we omit the detail of the proof.
\begin{corollary}\label{Lem303} %\blue{[moved here]}
%[I changed the statement.]
For $x,b \in \R$,  we have 
$\bE_x \left[\int_0^\infty e^{-qt} \left|f^\prime_+ (U^b_t)\right|\diff t \right]<\infty$.
\end{corollary}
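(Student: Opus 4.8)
The statement to prove is Corollary \ref{Lem303}: for all $x,b\in\R$, $\bE_x\left[\int_0^\infty e^{-qt}|f'_+(U^b_t)|\diff t\right]<\infty$. The paper itself signals the route — it says this follows "exactly in the same way as the proof of \cite[Lemma 4]{NobYam2020}," using Assumption \ref{Ass201}(1) (convexity of $f$) and Lemma \ref{Lem301}. So my proof would be a short deduction from Lemma \ref{Lem301}, and the work is entirely in controlling $|f'_+|$ by $|f|$-type quantities plus linear/constant terms, using convexity.

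First I would use convexity of $f$ to get a pointwise bound on $f'_+$. The standard trick: for a convex $f$, fix any two reference points $y_0 < y_1$ (say $y_0 = u-1$, $y_1 = u+1$ around the point $u$ of interest, or better, two fixed points like $0$ and $1$ won't suffice for all $u$, so use points at distance $1$ from $u$). By the slope inequality for convex functions, for any $u\in\R$,
\[
f(u) - f(u-1) \;\le\; f'_+(u) \;\le\; f(u+1) - f(u),
\]
hence $|f'_+(u)| \le |f(u+1)| + |f(u-1)| + 2|f(u)|$ — or more simply $|f'_+(u)| \le |f(u-1)| + |f(u+1)| + 2|f(u)|$. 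Then I'd plug in $u = U^b_t$:
\[
\int_0^\infty e^{-qt}|f'_+(U^b_t)|\diff t \;\le\; \int_0^\infty e^{-qt}\big(|f(U^b_t-1)| + |f(U^b_t+1)| + 2|f(U^b_t)|\big)\diff t.
\]
Now the key observation is that $U^b_t - 1$ and $U^b_t + 1$ are themselves controlled processes of the same form: $U^b_t \pm 1 = X_t^{\pm} + R^{b}_t$ where... actually cleaner: $U^{b}_t + c$ is the controlled process corresponding to barrier strategy $\pi^{b+c}$ started from $x+c$, or one can just observe that $U^b_t \pm 1$ is dominated in the relevant sense. The simplest argument: apply Lemma \ref{Lem301}(i) three times — once to the periodic barrier strategy with barrier $b$ and running cost $f$ (giving finiteness of $\bE_x[\int e^{-qt}|f(U^b_t)|\diff t]$), and once each to barrier $b-1$ from $x-1$ and barrier $b+1$ from $x+1$, noting that $U^{b,x}_t - 1 \stackrel{d}{=} U^{b-1,x-1}_t$ and $U^{b,x}_t + 1 \stackrel{d}{=} U^{b+1,x+1}_t$ by spatial homogeneity of the L\'evy process. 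Each of those three expectations is finite by Lemma \ref{Lem301}(i), so the sum is finite, and taking expectations in the displayed inequality completes the proof.

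**Main obstacle.** There is no deep obstacle — this is genuinely a corollary — but the one point requiring care is the shift identity $U^{b,x}_t + c \stackrel{d}{=} U^{b+c,x+c}_t$ (equality in law, jointly in $t$, under $\p_x$ vs.\ $\p_{x+c}$). This follows because the periodic barrier mechanism in \eqref{32} is invariant under simultaneous translation of the starting point, the barrier, and the path of $X$ — the stopping times $T_b^{(n)}$ and the shifted processes $\tilde X^{(m),b}$ all transform covariantly — combined with the spatial homogeneity of $X$ (the law of $X$ under $\p_x$ shifted by $c$ equals the law under $\p_{x+c}$). I would state this as a one-line remark rather than belabor it. The only other thing to watch is the edge case where $f'_+(\pm\infty)$ is infinite (allowed by Assumption \ref{Ass201}(3)): but the pointwise bound above holds for every finite $u$ regardless, and $U^b_t$ is $\p_x$-a.s.\ finite for each $t$, so the argument is unaffected.
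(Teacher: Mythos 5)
Your proof is correct and follows essentially the same route the paper intends: it replicates the argument of \cite[Lemma 4]{NobYam2020}, bounding $|f'_+(u)|$ via the convexity slope inequality $f(u)-f(u-1)\leq f'_+(u)\leq f(u+1)-f(u)$ and then reducing to Lemma \ref{Lem301}(i) applied at shifted starting points and barriers, the shift identity being immediate from Remark \ref{remark_simple_expression_R} and spatial homogeneity of $X$.
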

Lemma \ref{Lem301} together {with} \eqref{R_b_nu} shows the following.
\begin{proposition}\label{Lem301_2}
For $b\in\bR$, the strategy $\pi^b$ is admissible. 
\end{proposition}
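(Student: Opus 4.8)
The plan is to verify the two requirements in the definition of $\A$: that $\pi^b$ has the prescribed structural form \eqref{A001} driven by a c\`agl\`ad, non-negative, $\mathbb{F}$-adapted intensity process, and that it satisfies the integrability bound \eqref{1}.

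For the structural form, I would invoke the representation \eqref{R_b_nu}, namely $R^b_t = \int_{[0,t]} \nu^b_s \diff N^{\eta}_s$, and check that $\nu^b$ meets the requirements. Non-negativity is immediate from the $(\cdot)^+$ in its definition. For c\`agl\`ad regularity, note that on each stochastic interval $(T^{(n-1)}_b, T^{(n)}_b]$ the process $\nu^b$ equals $(X_{T_b^{(n-1)}} - X_{t-})^+$, which is left-continuous with right limits because $t \mapsto X_{t-}$ is, and the endpoints of the partition are $\mathbb{F}$-stopping times by construction \eqref{32}. Concatenating finitely many such pieces on any bounded interval (there are a.s.\ finitely many control times in $[0,t]$, since $\mathcal{T}_{\eta}$ is a sequence of Poisson arrival times and hence $N^\eta_t < \infty$ a.s.) preserves the c\`agl\`ad property; the only point needing care is left-continuity at each $T^{(n)}_b$, which holds because $X$ and $N^\eta$ a.s.\ have no common jump times. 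Adaptedness follows since $\nu^b_t$ is a function of $(X_{s-})_{s \leq t}$, the stopping times $T^{(k)}_b \leq t$, and the values $X_{T_b^{(k)}}$, all of which are $\mathcal{F}_{t-} \subseteq \mathcal{F}_t$-measurable.

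For the integrability condition \eqref{1}, I would simply combine parts (i) and (ii) of Lemma \ref{Lem301}: the first gives $\E_x[\int_0^\infty e^{-qt}|f(U^b_t)|\diff t] < \infty$ and the second gives $\E_x[\int_0^\infty e^{-qt}\diff R^b_t] < \infty$; adding these yields \eqref{1}. Together with the structural verification, this shows $\pi^b \in \A$.

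I do not anticipate a serious obstacle here — the substantive work lies in Lemma \ref{Lem301} (deferred to the appendix) and in the pathwise construction of $\pi^b$ already carried out above, so the proposition is essentially a bookkeeping statement assembling those facts. The one spot deserving a line of care is the left-continuity of $\nu^b$ at the control epochs, which is precisely why the remark that $X$ and $N^\eta$ do not jump simultaneously was recorded.
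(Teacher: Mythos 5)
Your proof is correct and follows the paper's own route: the paper likewise deduces admissibility directly from the representation \eqref{R_b_nu} together with Lemma \ref{Lem301}(i)--(ii), treating the c\`agl\`ad/adapted/non-negative properties of $\nu^b$ as verified by the construction in Section \ref{section_barrier}. Your extra care about left-continuity at the control epochs (no common jumps of $X$ and $N^\eta$) is consistent with the remark the paper records for exactly that purpose.
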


%The admissibility of $\pi^b$ is verified in Lemma \ref{Lem301}.
}

%\blue{[moved the following results that hold for arbitrary $b$]}
Let $T_b := T_b^{(1)}$ be the first control time under the policy $\pi^b$. 
%\blue{We show the following lemma to obtain the derivative of $v_b$; see Appendix \ref{proof_Lem302} for the proof.}
{We conclude this section with the expression of the slope of $v_b$ written in terms of $T_b$ and the {uncontrolled} \lev process $X$.}

\begin{proposition}\label{Lem404} %\blue{[changed from Lemma to Proposition]}
{For $b \in \R$, the function $v_b$ is continuously differentiable with its derivative}
%For $x, b\in \bR$, we have 
\begin{align}
v_b^\prime (x)%\lim_{\varepsilon \to 0}\frac{v_b(x+\varepsilon)-v_b(x)}{\varepsilon}
= \bE_x \left[\int_0^{T_b} e^{-qt} f^\prime_+(X_t) \diff t \right]-C\bE_x \left[ e^{-qT_b}\right], \quad x \in \R.  \label{20}
\end{align}
%which belongs to $C(\bR)$. 
\end{proposition}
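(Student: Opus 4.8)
I would start from the defining relation for $v_b$ by conditioning on the first control opportunity $T_b^{(1)}$ (or, more precisely, on whether the process at the first Poisson arrival lies below $b$). Writing $\mathbf{e}$ for an independent exponential clock with rate $\eta$, a renewal/strong Markov argument at $T_b$ gives an equation of the form
\begin{align}
v_b(x) = \bE_x\left[\int_0^{T_b} e^{-qt} f(X_t)\,\diff t\right] + \bE_x\left[e^{-qT_b}\bigl(C(b - X_{T_b}) + v_b(b)\bigr)\right], \quad x \in \R, \label{plan_renewal}
\end{align}
where $T_b$ is the first arrival time at which $X$ has strayed below $b$; here I use that the controlled process restarts from $b$ after each control and that on the event $\{X_{T_b}\ge b\}$ no control is exerted. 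The key structural point is that $T_b$ and the integrand depend on $x$ only through the uncontrolled process $X$ started at $x$; and by spatial homogeneity of the L\'evy process, $T_b$ under $\bP_x$ has the same law as the first arrival time after which $X - x + b < b$, i.e. $T_{b-x}$-type quantities, allowing the $x$-dependence to be made explicit.

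**Key steps.** First, I would make \eqref{plan_renewal} rigorous, using Lemma \ref{Lem301} to justify that all expectations are finite and that Fubini applies; the constant term $v_b(b)$ is finite by Lemma \ref{Lem301}(iii). Second, I would differentiate in $x$. The cleanest route is to exploit the explicit description in terms of $X$: the law of $T_b$ under $\bP_x$ is that of the first Poisson arrival after which the running minimum of $X$ (started at $x$) drops below $b$, equivalently the first arrival after $\underline{X}$ goes below $b-x$ when $X$ starts at $0$. Since the potential measure of $X$ has no atoms (Remark \ref{remark_mass}), the relevant first-passage functionals are continuous in the spatial variable, and one can differentiate through the expectations. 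A convenient device is the identity $f(X_t) = f(b) + \int_b^{X_t} f'_+(y)\,\diff y$ (valid by convexity and the polynomial bound in Assumption \ref{Ass201}(2)), which after applying Fubini turns the running-cost term into $\int f'_+(y)\,\bP_x(\text{stuff})\,\diff y$; differentiating in $x$ then peels off the integral and produces the $\int_0^{T_b} e^{-qt} f'_+(X_t)\,\diff t$ term, while the controlling and continuation terms combine to give $-C\,\bE_x[e^{-qT_b}]$ (the $v_b(b)$ term and the parts of the $C(b-X_{T_b})$ term that do not survive differentiation cancel, as they must since the formula \eqref{20} has no $v_b(b)$ in it). Third, I would verify continuity of the resulting expression in $x$ — this follows from dominated convergence, using Corollary \ref{Lem303} for the integral term and continuity of $x \mapsto \bE_x[e^{-qT_b}]$ for the second — which upgrades differentiability to continuous differentiability.

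**Main obstacle.** The technical heart is justifying differentiation under the expectation, i.e. controlling how $T_b$ and $X_{T_b}$ vary with the starting point $x$. Near $x = b$ the behaviour of $T_b$ is delicate (it is the first arrival at which $X$ is strictly below $b$), and one must rule out a jump in the derivative there; this is exactly where the no-atom property of the potential measure (Remark \ref{remark_mass}) is essential, since it guarantees $\bP_x(X_{T(1)} = b) = 0$ and more generally that the first-passage quantities are continuous rather than merely measurable in the spatial argument. A clean way to organise this is to first establish the formula for $v_b'$ as a one-sided (say right) derivative by a direct difference-quotient computation — comparing the strategies $\pi^b$ started from $x$ and from $x+h$ and noting they couple after the first control — and then argue the right-hand side of \eqref{20} is continuous in $x$, so the right derivative is in fact a genuine derivative and $v_b \in C^1(\R)$. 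Alternatively, one can mimic the corresponding argument in \cite{NobYam2020}, but the Poissonian setting should make the coupling cleaner because controls happen only at discrete (Poisson) times, so no subtle local-time terms appear.
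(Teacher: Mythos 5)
Your target formula and several of the right ingredients (the integrability results, the no-atom property, a coupling idea) are present, but the central step of your main route is not actually carried out. Differentiating the renewal identity $v_b(x) = \bE_x\left[\int_0^{T_b} e^{-qt} f(X_t)\,\diff t\right] + \bE_x\left[e^{-qT_b}\left(C(b - X_{T_b}) + v_b(b)\right)\right]$ in $x$ is delicate precisely because, by spatial homogeneity, $T_b$ under $\bP_x$ has the law of $T_{b-x}$ under $\bP_0$, a pathwise nonincreasing step function of $x$. Differentiating the explicit occurrences of $x$ does give the two terms of \eqref{20}, but moving $x$ also moves $T_b$, and this produces boundary contributions from all three terms (the running-cost integral, the overshoot term $C(b-X_{T_b})$, and the continuation term $v_b(b)\,\bE_x[e^{-qT_b}]$). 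You dispose of these with ``they must cancel since \eqref{20} has no $v_b(b)$ in it'' --- that is circular, since \eqref{20} is exactly what is to be proved. To show these marginal contributions vanish one has to estimate the cost difference on the event that the first control time changes when the starting point is raised by $\varepsilon$ (roughly, $X_{T_b}\in[b-\varepsilon,b)$), which requires a priori local Lipschitz control of $v_b$ near $b$ together with the a.s.\ continuity of $b'\mapsto T_{b'}$ (the paper's Lemma \ref{Lem302}); in effect one is forced back to a pathwise comparison of the two controlled paths, which is the argument the paper actually gives.

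Your fallback sketch (difference quotients comparing $\pi^b$ started from $x$ and $x+h$) is indeed the paper's route, but the phrase ``they couple after the first control'' hides the real work and is slightly wrong as stated: the two controlled paths do \emph{not} merge at the first control time $T_b$ of the lower-started path; they merge only at the first control time of the higher-started path, which equals $T_{b-h}$, and on $[T_b, T_{b-h})$ the difference is only known to lie in $[0,h]$ (the paper's step (i) and \eqref{14}). The difference quotient therefore contains a cross term of the form $\bE_x\left[\int_{T_b}^{T_{b-h}} e^{-qt}\,h^{-1}\left(f(U^{[h],b}_t)-f(U^b_t)\right)\diff t\right]$, and killing it as $h\downarrow 0$ needs exactly Corollary \ref{Lem303} (integrability of $f'_+(U^b)$) together with Lemma \ref{Lem302} ($T_{b-h}\to T_b$ a.s.), neither of which appears in your outline. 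Finally, your plan to compute only the right derivative and upgrade via ``continuous right derivative $\Rightarrow C^1$'' is legitimate and could shorten the argument, but establishing continuity of the right-hand side of \eqref{20} in $x$ --- in particular its left-continuity, where $f'_+(X_t-\varepsilon)\to f'_+(X_t)$ can fail at the countably many discontinuities of $f'_+$ --- again requires Remark \ref{remark_mass} and the Lemma \ref{Lem302}-type continuity of $T_b$ in the barrier/starting level; these need to be invoked explicitly rather than subsumed under ``first-passage functionals are continuous.''
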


{The proof of Proposition \ref{Lem404} requires the following continuity result of $T_b$; its proof is deferred to Appendix \ref{proof_Lem302}.}
\begin{lemma}\label{Lem302} %\blue{[moved here]}
For fixed $b\in\bR$, we have $\lim_{b^\prime\to b} T_{b^\prime}=T_b$ on $\{T_b < \infty\}$, almost surely. 
\end{lemma}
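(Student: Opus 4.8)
\textbf{Proof proposal for Lemma \ref{Lem302}.}

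The plan is to fix $b \in \R$, work on the event $\{T_b < \infty\}$, and analyze the convergence of the first control time $T_{b'}$ as $b' \to b$ by treating the approach from above ($b' \downarrow b$) and from below ($b' \uparrow b$) separately, exploiting the monotonicity of $b' \mapsto T_{b'}$ that is visible from the defining formula \eqref{32} for $n=1$: namely $T_{b'} = \inf\{ t \in \mathcal{T}_\eta : X_t < b'\}$, so that $T_{b'}$ is non-decreasing in $b'$. First I would recall, via Remark \ref{remark_mass} (Assumption \ref{Ass202}(1) forbids a driftless compound Poisson process), that the potential measure of $X$ has no atoms, hence $\bP_x(X_{T(k)} = b) = 0$ for every fixed $k$ and every level $b$, and more generally $\bP_x(X_t = b \text{ for some } t \in \mathcal{T}_\eta) = 0$; this is the mechanism that rules out ``sticking'' at the level $b$.

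For the approach from above, let $b' \downarrow b$. By monotonicity, $T_{b'}$ decreases to some limit $\tau \leq T_b$, and since $\mathcal{T}_\eta$ is a discrete (locally finite) set, eventually $T_{b'}$ equals a fixed element $T(k) \in \mathcal{T}_\eta$; thus $\tau = T(k) \in \mathcal{T}_\eta$ and $\tau \leq T_b$. At this element we have $X_{T(k)} < b'$ for all $b' > b$ in the sequence, hence $X_{\tau} \leq b$; but on the complementary event of measure zero excluded above, $X_\tau \neq b$, so $X_\tau < b$, and since $\tau \in \mathcal{T}_\eta$ this forces $\tau \geq T_b$ by definition of $T_b$. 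Therefore $\tau = T_b$. For the approach from below, let $b' \uparrow b$; now $T_{b'}$ increases to some limit $\rho \leq T_b$, again eventually constant and equal to some $T(j) \in \mathcal{T}_\eta$ with $X_{T(j)} < b' < b$ for the relevant $b'$, so $X_\rho < b$ and $\rho \in \mathcal{T}_\eta$, whence $\rho \geq T_b$; combined with $\rho \leq T_b$ this gives $\rho = T_b$. Here I should check that no control opportunity in $(\,\rho, T_b\,)$ gets ``activated in the limit'': if there were $T(i) \in \mathcal{T}_\eta$ with $\rho < T(i) < T_b$, then $X_{T(i)} \geq b$ (else $T_b \leq T(i)$), and in fact $X_{T(i)} > b$ off the null set, so $X_{T(i)} \geq b > b'$ eventually, meaning $T(i)$ is not an activation time for $T_{b'}$ — consistent, no contradiction, and the argument closes.

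The main obstacle I anticipate is the bookkeeping around the boundary event $\{X_t = b \text{ for some } t \in \mathcal{T}_\eta\}$: one must ensure that all the pointwise arguments above are performed on its complement, and that this complement has full measure uniformly — which follows because $\mathcal{T}_\eta$ is countable and, by Remark \ref{remark_mass}, $\bP_x(X_{T(k)} = b) = 0$ for each $k$ by conditioning on $N^\eta$ and using that the potential measure of $X$ is atomless. A secondary point requiring care is the passage ``eventually $T_{b'}$ is a fixed element of $\mathcal{T}_\eta$'': this uses local finiteness of the Poisson arrival times together with the monotone convergence of $T_{b'}$, so that a monotone sequence converging within a locally finite set is eventually constant; one also uses $T_b < \infty$ so that only finitely many arrivals are relevant. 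Once these two technical points are handled, the two one-sided limits coincide with $T_b$ and the full two-sided limit follows, completing the proof almost surely on $\{T_b < \infty\}$.
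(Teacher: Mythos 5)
There is a concrete error in your argument: the monotonicity of $b' \mapsto T_{b'}$ is reversed. Since $T_{b'} = \inf\{t \in \mathcal{T}_\eta : X_t < b'\}$, enlarging $b'$ enlarges the set over which the infimum is taken, so $T_{b'}$ is \emph{non-increasing} in $b'$ (this is also the direction the paper uses at the end of its proof). With the correct direction, your approach-from-above argument survives essentially intact: for $b' \downarrow b$ one has $T_{b'} \le T_b$ and $T_{b'}$ \emph{increases} to a limit $\tau \le T_b$; eventual constancy within the finite set $\mathcal{T}_\eta \cap [0,T_b]$ together with $\bP_x(X_{T(k)} = b) = 0$ (Remark \ref{remark_mass}) gives $X_\tau < b$, hence $\tau \ge T_b$ and so $\tau = T_b$ --- this is exactly where the paper also needs the absence of atoms. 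But the approach-from-below paragraph collapses as written: for $b' \uparrow b$ the correct monotonicity gives $T_{b'} \ge T_b$, so the limit $\rho$ satisfies $\rho \ge T_b$ automatically, and your step ``$X_\rho < b$, hence $\rho \ge T_b$'' only re-proves this trivial direction; the inequality you actually need, $\rho \le T_b$, is asserted solely on the strength of the reversed monotonicity and is therefore unjustified.

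The missing ingredient is simple: on $\{T_b < \infty\}$ the infimum defining $T_b$ is attained at some $T(k) \in \mathcal{T}_\eta$ with the \emph{strict} inequality $X_{T_b} < b$, so for every $b' \in (X_{T_b}, b)$ one has $T_{b'} \le T_b$; combined with $T_{b'} \ge T_b$ this yields $T_{b'} = T_b$ for all $b'$ sufficiently close to $b$ from below, and no null set is needed in this direction. With these two corrections your pathwise argument becomes a valid, and arguably more elementary, alternative to the paper's proof, which instead shows $\lim_{\varepsilon \downarrow 0}\bP_x(T_b = T_{b\pm\varepsilon},\, T_b < \infty) = \bP_x(T_b < \infty)$ by monotone convergence, the strong Markov property at the observation epochs and Remark \ref{remark_mass}, and then concludes almost sure convergence from the monotonicity of $b \mapsto T_b$.
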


\begin{proof}[Proof of Proposition \ref{Lem404}]
%\blue{[I changed the proof substantially.]}
    %\blue{[moved here] 
By Lemma \ref{Lem301}, we can decompose the expected costs {as follows:}
\begin{align}
v_b (x) = v_b^{(1)}(x)  + C v_b^{(2)}(x),  \label{def_v_b}
\quad x\in\R, 
\end{align}
where we write
\begin{align}
v_b^{(1)}(x) := \E_x\left[ \int_0^\infty e^{-qt} f(U^b_t)\diff t   \right], \quad
v_b^{(2)}(x) := \E_x\left[ \int_0^\infty e^{-qt}  \diff R^b_t  \right]. 
\end{align}
%  This decomposition makes sense  }
  %\eqref{def_v_b} makes sense by the following lemma whose proofs are given in Appendix \ref{appendix_proof}.}

%\blue{[changed $(y)$ to $[y]$ because $T^{(n)}_b$ has been used.]}
For $y\in\bR$, we write $X^{[y]}_t:= X_t +y$, $t\geq 0$, and write $U^{[y],b}$, $R^{[y],b}_t$, $T^{[y]}_b$ be those corresponding to this shifted process.

(i)
Fix $b \in \R$ and $\varepsilon > 0$. We show that $t \mapsto U^{[\varepsilon], b}_t-U^b_t= \varepsilon+ R^{[\varepsilon], b}_t-R^b_t$ is nonincreasing and {always} lies on $[0, \varepsilon]$. Because this difference is a step function {in $t$} with jump times contained in the set {$\mathcal{T}_\eta$,}
%$\{ T(k) \}_{k \in \mathbb{N}}$, 
it suffices to show
\[
\zeta(k) := U^{[\varepsilon],b}_{T(k)}-U^b_{T(k)}=\varepsilon + R^{[\varepsilon],b}_{T(k)}-R^b_{T(k)}, \quad k \geq 0,
\]
is nonincreasing in $k$ and takes values only on $[0,\varepsilon]$.
%
%
%Consider the difference $U^{b+\varepsilon}_t-U^b_t=R^{b+\varepsilon}_t-R^b_t$. We shall show the following claim.
%\begin{enumerate}
%\item it increases.
%\item between $[0, \varepsilon]$
%\item zero on $[0, T_b+\varepsilon)$ 
%\item $\varepsilon$ on $[T_{b}, \infty)$.
%\end{enumerate}
%The third claim is immediate.
%we can only focus on these times.
We show this claim by induction. 

First it holds trivially when $k = 0$ with $\zeta(0) = {\varepsilon} \in [0,\varepsilon]$.

Now, suppose it holds that $\zeta(k) \in [0,\varepsilon]$ for some $k \geq 0$.
%
%%at $T(k)$ with $k \in \bN \cup \{0\}$, we have 
%\begin{align}
%\begin{aligned}
%U^{b+\varepsilon}_{T(k)}-U^{b}_{T(k)}=R^{b+\varepsilon}_{T(k)}-R^{b}_{T(k)}\in[0, \varepsilon].
%% \quad U^{b}_{T(k)} \geq b
%%, \quad U^{b+\varepsilon}_{T(k)}\geq b+\varepsilon.
%%R^{b+\varepsilon}_{T(k)}-R^{b}_{T(k)}=U^{b+\varepsilon}_{T(k)}-U^{b}_{T(k)}, \quad R^{b}_{T(k)}\geq 0. 
%\end{aligned}\label{6}
%\end{align}
With the set of indices of controlling: %\blue{[here changed from $\varepsilon$ to $\delta$ because it is used for the case $\delta = 0$]}
\[
A^{[\delta]} := \{ k \geq 1: \Delta R_{T(k)}^{[\delta], b} > 0\} = \{ k \geq 1: U^{[\delta],b}_{T(k-1)}+(X_{T(k)} - X_{T(k-1)}) < b \}, \quad \delta = 0, \varepsilon,
% \tilde{U}^{b}_{T(k)} \neq U^{b}_{T(k)}\} = \{ k \geq 0: \tilde{U}^{b}_{T(k)} < b \}.
\]
we have
\begin{multline}
\{ k+1 \in A^{[\varepsilon]} \} %= \{\tilde{U}^{b}_{T(k+1)} < b \} 
= \{ U^{b}_{T(k)}  + \zeta(k) +(X_{T(k+1)} - X_{T(k)}) < b \} \\ \subset  \{ U^{b}_{T(k)}+(X_{T(k+1)} - X_{T(k)}) < b \} = \{ k+1 \in A^{[0]} \}. \label{set_inclusion2}
\end{multline}
%In addition,
%\[
%\{ k+1 \notin A_b, k+1 \notin A_{b+\varepsilon} \} 
%= \{ U^{b}_{T(k)}+(X_{T(k+1)} - X_{T(k)}) > b, U^{b+\varepsilon}_{T(k)}+(X_{T(k+1)} - X_{T(k)}) > b + \varepsilon \}
%\]
%and 
%\[
%\{ k+1 \notin A_b, k+1 \in A_{b+\varepsilon} \} 
%= \{ U^{b}_{T(k)}+(X_{T(k+1)} - X_{T(k)}) > b, U^{b+\varepsilon}_{T(k)}+(X_{T(k+1)} - X_{T(k)}) < b + \varepsilon \}
%\]

(A) Suppose $k+1 \in A^{[\varepsilon]}$ so that $U^{[\varepsilon],b}_{T(k+1)} = b$. By \eqref{set_inclusion2}, this also implies $k+1 \in A^{[0]}$ or equivalently $U^{b}_{T(k+1)} = b$. Hence, $\zeta(k+1) = 0$.

(B) Suppose $k+1 \notin A^{[\varepsilon]}$ so that 
\begin{align}
U^{[\varepsilon],b}_{T(k)}+(X_{T(k+1)} - X_{T(k)})  \geq b. \label{U_bigger_b2}
\end{align}

(a) Suppose $k+1 \notin A^{[0]}$, then because $\Delta R^{[\varepsilon],b}_{T(k+1)} = \Delta R^{b}_{T(k+1)} = 0$, we have $\zeta(k+1) = \zeta(k) \in [0,\varepsilon]$.

(b) Suppose $k+1 \in A^{[0]}$. Then, clearly $\zeta(k+1) = \zeta(k) - \Delta R^{b}_{T(k+1)} < \zeta(k)$. In addition, by \eqref{U_bigger_b2},
\[
 \zeta(k+1) = (U^{[\varepsilon],b}_{T(k)}+(X_{T(k+1)} - X_{T(k)})) - b \geq 0.
\]
In sum, in all cases we have $\zeta(k+1) \leq \zeta(k)$ and in addition, $\zeta(k+1) \in [0, \varepsilon]$. By mathematical induction we have that $\zeta$ is nonincreasing and always lies {in} $[0, \varepsilon]$. 
In view of \eqref{set_inclusion2}, this also shows $A^{[\varepsilon]} \subset A^{[0]}$. 

%\blue{[need to say that control cannot happen when the process is right at zero.]}

At the moment %$T_b = \inf_{k \in A^{(0)}} T(k)$
{$T^{[\varepsilon]}_b = \inf_{k \in A^{[\varepsilon]}} T(k)$} {with $\inf \varnothing = \infty$}, %\blue{[changed back to inf because the set may be empty]} 
the difference {between $U^{[\varepsilon], b}$ and $U^{b}$} becomes $0$ and must stay at $0$ afterwards. On the other hand, before %$T^{(\varepsilon)}_{b}$
{$T_{b}$} there is no control for both and the difference is $\varepsilon$. In sum, %\blue{[below changed from $T^{(0)}_b$ to $T_b$]}
\begin{align}
U^{[\varepsilon], b}_t-U^b_t=
\begin{cases}
\varepsilon, \quad &t\in[0, T_b), \\
0, \quad&t\in[{T^{[\varepsilon]}_b}, \infty),
\end{cases}
\quad 
R^{[\varepsilon], b}_t-R^b_t=
\begin{cases}
0, \quad &t\in[0, T_b), \\
-\varepsilon, \quad&t\in[{T^{[\varepsilon]}_b}, \infty).
\end{cases}
\label{14}
\end{align}
\par
(ii) 
%We prove, for $x\in\bR$, 
%\begin{align}
%\lim_{\varepsilon\downarrow0}\frac{v^{(1)}_b(x+\varepsilon)-v^{(1)}_b(x)}{\varepsilon}=\bE_x \left[\int_0^{T_b} e^{-qt} f^\prime_+(X_t) \diff t \right], \label{17}\\
%\lim_{\varepsilon\downarrow0}\frac{v^{(1)}_b(x)-v^{(1)}_b(x-\varepsilon)}{\varepsilon}=\bE_x \left[\int_0^{T_b} e^{-qt} f^\prime_-(X_t) \diff t \right].\label{18}
%\end{align}
%We have 
%\begin{align}
%\frac{v^{(1)}_b(x+\varepsilon)-v^{(1)}_b(x)}{\varepsilon}
%=\bE_x \left[ \int_0^\infty e^{-qt} \frac{f(U^{(\varepsilon),b}_t )-f(U^{(0),b}_t ) }{\varepsilon}\diff t\right].
%\end{align}
By \eqref{14}, we have
%By (i) (in particular that the process $\{U^{(\varepsilon),b}_t-U^b_t : t\geq 0\}$ is nonincreasing and from \eqref{14}), %\blue{[I guess we are only using \eqref{14}?]}
\begin{align}
\frac{v^{(1)}_b(x+\varepsilon)-v^{(1)}_b(x)}{\varepsilon} =& \bE_x \left[    \int_0^{T_b} e^{-qt} \frac{f(U^{b}_t + \varepsilon)-f(U^{b}_t ) }{\varepsilon}\diff t \right] \\
&+ \bE_x \left[   \int_{T_b}^{T_b^{[\varepsilon]}} e^{-qt} \frac{f(U^{[\varepsilon],b}_t )-f(U^{b}_t ) }{\varepsilon}\diff t  \right].
\end{align}
By (i) (in particular that the process $\{U^{[\varepsilon],b}_t-U^b_t : t\geq 0\}$ is nonincreasing), mean value theorem, and the convexity of $f$, for all $0 < \varepsilon < \bar{\varepsilon}$,
\begin{align}
\left| \bE_x \left[   \int_{T_b}^{T_b^{[\varepsilon]}} e^{-qt} \frac{f(U^{[\varepsilon],b}_t )-f(U^{b}_t ) }{\varepsilon}\diff t  \right] \right| & \leq \bE_x \left[   \int_{T_b}^{T_b^{[\varepsilon]}} e^{-qt} \frac{|f(U^{[\varepsilon],b}_t )-f(U^{b}_t )| }{\varepsilon}\diff t  \right] \\
 \leq \bE_x &\left[ \int_{T_b}^{T_b^{[\varepsilon]}} e^{-qt} \sup_{y \in [U_t^b, U_t^b+\bar{\varepsilon}]} |f'_+(y)|
 %\frac{|f(U^{b+\varepsilon}_t)-f(U^b_t)|}{\varepsilon}
 \diff t\right]  \\
 \leq \bE_x &\left[\int_{T_b}^{T_{b-\varepsilon}}  e^{-qt} (|f'_+(U_t^b)| + |f'_+(U_t^b+\bar{\varepsilon})|)
 %\frac{|f(U^{b+\varepsilon}_t)-f(U^b_t)|}{\varepsilon}
 \diff t\right] \\
 \xrightarrow{\varepsilon \downarrow 0}& 0,
\end{align}
where 
{$T_b^{[\varepsilon]} = T_{b-\varepsilon}$ holds because $\{t < T_b^{[\varepsilon]} \} = \{ \min_{1 \leq k \leq N_t^\eta} X_{T(k)}^{[\varepsilon]} \geq b
\}  = \{ \min_{1 \leq k \leq N_t^\eta} X_{T(k)} \geq b-\varepsilon
\} = \{t < 
{T_{b-\varepsilon}}
\}$
%T_b^{(\varepsilon)} \}$ 
(see Remark \ref{remark_simple_expression_R}) and}
the last limit holds by monotone convergence and Lemma \ref{Lem302}. Note that the finiteness of the expectations above hold by {Corollary} \ref{Lem303}.
%
%\begin{align}
%\begin{aligned}
% \bE_x \left[\inf_{s\in[T_{b+\varepsilon}, T_b] }\int_s^\infty e^{-qt} \frac{f(U^{b+\varepsilon}_t)-f(U^b_t)}{\varepsilon}\diff t\right]&\leq
%\frac{v^{(1)}_{b+\varepsilon}(x)-v^{(1)}_b(x)}{\varepsilon}\\
%\leq &\bE_x \left[\sup_{s\in[T_{b+\varepsilon}, T_b] }\int_s^\infty e^{-qt} \frac{f(U^{b+\varepsilon}_t)-f(U^b_t)}{\varepsilon}\diff t\right]. 
%\end{aligned}\label{9}
%\end{align}
%By \eqref{7},
%\begin{align}
%&\left|\bE_x \left[\sup_{s\in[T_{b+\varepsilon}, T_b] }\int_s^\infty e^{-qt} \frac{f(U^{b+\varepsilon}_t)-f(U^b_t)}{\varepsilon}\diff t\right]-\bE_x \left[\inf_{s\in[T_{b+\varepsilon}, T_b] }\int_s^\infty e^{-qt} \frac{f(U^{b+\varepsilon}_t)-f(U^b_t)}{\varepsilon}\diff t\right] \right|\\
%\label{8}&\qquad \leq \bE_x \left[\int_{T_{b+\varepsilon}}^{T^{(\varepsilon)} \blue{T_b??}} e^{-qt} \frac{|f(U^{b+\varepsilon}_t)-f(U^b_t)|}{\varepsilon}\diff t\right]
%\xrightarrow{\varepsilon \downarrow 0} 0,
%\end{align}
%%as $\varepsilon \downarrow 0$ 
%where the last limit holds 
%by Lemmas \ref{Lem302} and \ref{Lem303} and the dominated convergence theorem. 
%From \eqref{9}, \eqref{8}, \eqref{7} and Lemma \ref{Lem303} and the dominated convergence theorem, we have 
Now by the convexity of $f$, monotone convergence gives
%\begin{align}
%\lim_{\varepsilon\downarrow 0}\frac{v^{(1)}_{b+\varepsilon}(x)-v^{(1)}_b(x)}{\varepsilon}
%=&\lim_{\varepsilon\downarrow 0}\bE_x \left[\int_{T_b}^\infty e^{-qt} \frac{f(U^{b}_t+\varepsilon)-f(U^b_t)}{\varepsilon}\diff t\right]
%= \bE_x \left[\int_{T_b}^\infty e^{-qt} f^\prime_+(U^b_t) \diff t \right]. \label{11}
%\end{align}
%
%
%
%From \eqref{14}, we have 
%\begin{align}
%\begin{aligned}
% \bE_x \left[\inf_{s\in[T^{(0)}_{b}, T^{(\varepsilon)}_b] }\int_0^s e^{-qt} \frac{f(U^{(\varepsilon),b}_t)-f(U^{(0),b}_t)}{\varepsilon}\diff t\right]&\leq
%\frac{v^{(1)}_b(x+\varepsilon)-v^{(1)}_b(x)}{\varepsilon}\\
%\leq &\bE_x \left[\sup_{s\in[T^{(0)}_{b}, T^{(\varepsilon)}_b] }\int_0^s e^{-qt} \frac{f(U^{(\varepsilon),b}_t)-f(U^{(0),b}_t)}{\varepsilon}\diff t\right]. 
%\end{aligned}\label{9}
%\end{align}
%We have 
%\begin{align}
%&\left|\bE_x \left[\sup_{s\in[T^{(0)}_{b}, T^{(\varepsilon)}_b] }\int_0^s e^{-qt} \frac{f(U^{(\varepsilon),b}_t)-f(U^{(0),b}_t)}{\varepsilon}\diff t\right]-\bE_x \left[\inf_{s\in[T^{(0)}_{b}, T^{(\varepsilon)}_b] }\int_0^s e^{-qt} \frac{f(U^{(\varepsilon),b}_t)-f(U^{(0),b}_t)}{\varepsilon}\diff t\right] \right|\\
%\label{8}&\qquad \leq \bE_x \left[\int_{T^{(0)}_{b}}^{T^{(\varepsilon)}_b} e^{-qt} \frac{|f(U^{(\varepsilon),b}_t)-f(U^{(0),b}_t)|}{\varepsilon}\diff t\right]
%\to0,
%\end{align}
%as $\varepsilon \downarrow 0$ by Lemma \ref{Lem302}, \ref{Lem303} and the dominated convergence theorem. 
%From \eqref{9}, \eqref{8}, \eqref{7} and Lemma \ref{Lem303} and the dominated convergence theorem, we have 
\begin{align}
\lim_{\varepsilon\downarrow 0}\frac{v^{(1)}_b(x+\varepsilon)-v^{(1)}_b(x)}{\varepsilon}
=&\lim_{\varepsilon\downarrow 0}\bE_x \left[\int_0^{T_b} e^{-qt} \frac{f(U^{b}_t+\varepsilon)-f(U^b_t)}{\varepsilon}\diff t\right]\\
=&\bE_x \left[\int_0^{T_b} e^{-qt} f^\prime_+(U^b_t) \diff t \right]. 
\end{align}
%which implies \eqref{17}.
%By changing from $x$ to $x-\varepsilon$ in the above argument, we have 
%{[How about the following?]

In the same way, we compute the left derivative. 
By (i) with $x$ changed to $x-\varepsilon$, we have 
\begin{align}
\frac{v^{(1)}_b(x)-v^{(1)}_b(x-\varepsilon)}{\varepsilon} = \bE_x \left[    \int_0^{T_b} e^{-qt} \frac{f(U^{b}_t )-f(U^{b}_t -\varepsilon) }{\varepsilon}\diff t \right] + {h(\varepsilon)},
%-\bE_x \left[    \int_{T^{(-\varepsilon)}_b}^{T_b} e^{-qt} \frac{f(U^{b}_t )-f(U^{b}_t -\varepsilon) }{\varepsilon}\diff t \right]
%\\
%&+ \bE_x \left[   \int_{T_b^{(-\varepsilon)}}^{T_b} e^{-qt} \frac{f(U^{b}_t )-f(U^{(-\varepsilon),b}_t ) }{\varepsilon}\diff t  \right].
\end{align}
{where 
\[
h(\varepsilon) :=  \bE_x \left[   \int_{T_b^{[-\varepsilon]}}^{T_b} e^{-qt} \frac{f(U^{b}_t )-f(U^{[-\varepsilon],b}_t ) }{\varepsilon}\diff t  \right]-\bE_x \left[    \int_{T^{[-\varepsilon]}_b}^{T_b} e^{-qt} \frac{f(U^{b}_t )-f(U^{b}_t -\varepsilon) }{\varepsilon}\diff t \right].
\]}
For all $0 < \varepsilon < \bar{\varepsilon}$, {mean value theorem and the convexity of $f$ give}
\begin{align}
\left| \frac{f(U^{b}_t )-f(U^{b}_t -\varepsilon) }{\varepsilon} \right| \lor
\left| \frac{f(U^{b}_t )-f(U^{[-\varepsilon],b}_t ) }{\varepsilon}\right| \leq 
%\sup_{y \in [U_t^b-\bar{\varepsilon} , U_t^b]} |f'_+(y)| 
|f'_+(U_t^b-\bar{\varepsilon})| + |f'_+(U_t^b)|, \quad t \geq 0,
\end{align}
and thus
%\begin{align}
%&\bE_x \left[    \int_{T^{(-\varepsilon)}_b}^{T_b} e^{-qt} \frac{f(U^{b}_t )-f(U^{b}_t -\varepsilon) }{\varepsilon}\diff t \right]
%+ \bE_x \left[   \int_{T_b^{(-\varepsilon)}}^{T_b} e^{-qt} \frac{f(U^{b}_t )-f(U^{(-\varepsilon),b}_t ) }{\varepsilon}\diff t \right]\\
%&\leq 2\bE_x \left[ \int_{T^{(-\varepsilon)}_b}^{T_b} e^{-qt} \sup_{y \in [U_t^b-\bar{\varepsilon}, U_t^b]} |f'_+(y)| \diff t\right] 
${|h(\varepsilon)|}\leq 2\bE_x \left[\int_{ {T_{b+\varepsilon}}}^{T_b}  e^{-qt} (|f'_+(U_t^b-\bar{\varepsilon})| + |f'_+(U_t^b)|)
 \diff t\right] \xrightarrow{\varepsilon \downarrow 0} 0$ 
 where {we used} $T^{[-\varepsilon]}_b=T_{b+\varepsilon}$ {and Lemma \ref{Lem302}}.
%\end{align}
Therefore, as {in} the case of right derivative, we have {by monotone convergence,}
\begin{align}
\lim_{\varepsilon\downarrow 0}\frac{v^{(1)}_b(x)-v^{(1)}_b(x-\varepsilon)}{\varepsilon}
=&\lim_{\varepsilon\downarrow 0}\bE_x \left[\int_0^{T_b} e^{-qt} \frac{f(U^{b}_t)-f(U^b_t-\varepsilon)}{\varepsilon}\diff t\right]\\
=&\bE_x \left[\int_0^{T_b} e^{-qt} f^\prime_-(U^b_t) \diff t \right]. 
\end{align}
%Since the potential measure of $X$ has no atoms {by Assumption \ref{Ass202}(1) and \cite[Proposition I.15]{Ber1996}}, 
{Because the right and left derivatives coincide thanks to Remark \ref{remark_mass} and $U^{b}_t = X_t$ for $t < T_b$,} %we obtain 
\[v_b^{(1) \prime} (x)%\lim_{\varepsilon \to 0}\frac{v_b(x+\varepsilon)-v_b(x)}{\varepsilon}
=\bE_x \left[\int_0^{T_b} e^{-qt} f^\prime_+(U_t^b) \diff t \right]
=\bE_x \left[\int_0^{T_b} e^{-qt} f^\prime_+(X_t) \diff t \right].
\]
%which implies \eqref{18}. 

\par
(iii) %We compute the right-derivative of the map $b\mapsto v_b^{(2)}(x)$ for fixed $x\in\bR$ to show
We now show
\begin{align}
\lim_{\varepsilon\downarrow0}\frac{v^{(2)}_{b}(x+\varepsilon)-v^{(2)}_b(x)}{\varepsilon}=
\lim_{\varepsilon\downarrow0}\frac{v^{(2)}_{b}(x)-v^{(2)}_b(x-\varepsilon)}{\varepsilon}= -\bE_x \left[e^{-qT_b}\right]. \label{16}
\end{align}
%\blue{[below changed $R^{(0), b}$ to $R^{b}$]} 
Indeed, since the process $\{R^{[\varepsilon],b}_t-R^{b}_t : t\geq 0\}$ is nonincreasing by (i) and from \eqref{14}, we have 
\begin{align}
\begin{aligned}
- \bE_x \left[ e^{-qT_b}\right]\leq
\frac{v^{(2)}_{b}(x+\varepsilon)-v^{(2)}_b(x)}{\varepsilon}
\leq  -\bE_x \left[ e^{-qT_{b+\varepsilon}}\right]. 
\end{aligned}\label{15}
\end{align}
By Lemma \ref{Lem302} and \eqref{15}, we have that the first term in \eqref{16} is equal to the third term in \eqref{16}. 
By changing from $x$ to $x-\varepsilon$ in the above argument, we have the second equality in \eqref{16}. 
\par
From (ii) and (iii), we obtain \eqref{20}. 
\par
(iv) {It remains to show that {$x \mapsto v^\prime_b(x)$} is continuous.} %belongs to $C(\bR)$. 
We have 
\begin{align}
&\left|v_{b}^\prime (x+\varepsilon)-v_{b}^\prime (x)\right|\\
&\quad \leq 
\left|\bE_{x+\varepsilon} \left[\int_0^{T_b} e^{-qt} f^\prime_+(X_t) \diff t \right]
-\bE_x \left[\int_0^{T_b} e^{-qt} f^\prime_+(X_t) \diff t \right]\right|\\&\quad\quad
+|C|\left|\bE_{x+\varepsilon} \left[ e^{-qT_b}\right]-\bE_x \left[ e^{-qT_b}\right]\right|\\
&\quad= 
\left|\bE_{x} \left[\int_0^{T_{b-\varepsilon}} e^{-qt} f^\prime_+(X_t+\varepsilon) \diff t \right]
-\bE_x \left[\int_0^{T_b} e^{-qt} f^\prime_+(X_t) \diff t \right]\right|\\&\quad\quad
+|C|\left|\bE_{x} \left[ e^{-qT_{b-\varepsilon}}\right]-\bE_x \left[ e^{-qT_b}\right]\right|
\\
%&\leq
%\bE_x \left[ \int_0^{T_b} e^{-qt} \left| f^\prime_+(X_t+\varepsilon)- f^\prime_+(X_t) \right| \diff t \right]
%+\bE_x \left[ \int_{T_{b-\varepsilon}}^{T_b} e^{-qt} \left|  f^\prime_+(X_t) \right| \diff t \right]\\
&\quad\leq
\bE_x \left[ \int_0^{T_b} e^{-qt} \left| f^\prime_+(X_t+\varepsilon)- f^\prime_+(X_t) \right| \diff t \right]
+\bE_x \left[ \int_{T_b}^{T_{b-\varepsilon}} e^{-qt} \left|  f^\prime_+(X_t+\varepsilon) \right| \diff t \right]\\
&\qquad \quad +|C|\left|\bE_{x} \left[ e^{-qT_{b-\varepsilon}}\right]-\bE_x \left[ e^{-qT_b}\right]\right|.
\end{align}
{As $\varepsilon \searrow 0$,} the first expectation converges to zero by monotone convergence, the second expectation converges to zero by monotone convergence and Lemma \ref{Lem302}. The last expectation converges to zero by Lemma \ref{Lem302}. 
%\blue{By replacing $\varepsilon$ with $-\varepsilon$, we also have the left continuity. [CHECK]}
By replacing $x$ with $x-\varepsilon$ and again using 
{Remark \ref{remark_mass},} 
%the fact that the potential measure of $X$ has no atoms,} 
we also have the left continuity.
\end{proof}

%We define {the first control time under the policy $\pi^b$:}
%stopping times as follows: for $b, x \in\bR$, %\blue{[delete $x$?]}
%\begin{align}
%T_b:= \inf\{t\geq 0 :  R^{b}_t>0  \}. \label{first_control_time}
%\end{align}
%\blue{[actually this is the same as $T_b^{(1)}$. Better to just keep using $T_b^{(1)}$?]}
%{[You are right. How about the following?]
%{Let us write the first control time under the policy $\pi^b$ as $T_b := T_b^{(1)}$.}
%{[Should we delete the sentence that follows since it is clear from the definition of $T_b^{(1)}$?]}

\section{The optimal barrier $b^\ast$ in the periodic barrier strategies} \label{section_optimality}
In this section, we {show the optimality of a periodic barrier strategy.}
% for a suitable selection of the barrier.}
%give the optimal barrier values of the periodic barrier strategies. 
%For this, we give some lemmas. The proofs of Lemmas \ref{Lem303}--\ref{Lem302} are given in Appendix \ref{appendix_proof}. 
%Let we define the function $\rho$ as
Define 
\begin{align}
\rho(b):=\bE_b \left[\int_0^\infty e^{-qt} f^\prime_+ (U^b_t) \diff t \right],\qquad b\in\bR, \label{def_rho}
\end{align}
which takes real values by {Corollary} \ref{Lem303}.  Our candidate optimal barrier is
%\begin{align}
%b^\ast:= \inf\{b\in\bR:\rho (b) +C\geq 0\}
%\end{align}
%where
\begin{align}
b^\ast:= \inf\{b\in\bR:\rho (b) +C\geq 0\}, %\quad \textrm{where} \quad \rho(b) := \bE_b \left[\int_0^\infty e^{-qt} f^\prime_+ (U^b_t) \diff t \right], 
\label{33}
\end{align} 
%\blue{[above: maybe ok to change $\geq$ to $=$?]}
%{[Under the assumptions in this case, I think that's fine.
%However, in the case of driftless compound Poisson, I think the original definition should be used.
%Considering expansion, I think it's fine as is.]}
which is well-defined by Lemma \ref{Lem401} below;
%The function $\rho$ 
%\blue{is used to chracterize} the optimal barrier value and has the following properties; 
{see Appendix \ref{proof_lem401} for the proof. } 
\begin{lemma}\label{Lem401}
The function $\rho$ is nondecreasing and continuous. 
In addition, we have $\lim_{b \uparrow\infty} \rho(b)= f^\prime_+(\infty) /q > {-C}$ and $\lim_{b \downarrow-\infty} \rho(b)= f^\prime_+(-\infty) /q < {-C}$. 
\end{lemma}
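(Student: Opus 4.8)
The plan is to establish the three assertions — monotonicity, continuity, and the two limits — essentially by reducing $\rho$ to quantities involving the uncontrolled L\'evy process via Proposition 3.4, and then invoking dominated/monotone convergence together with Assumption 2.1. For monotonicity, I would first observe that $\rho$ is closely tied to $v_b'$: indeed, by Proposition~3.4 applied at $x = b$,
\[
\rho(b) + C = \bE_b\!\left[\int_0^\infty e^{-qt} f'_+(U^b_t)\,\diff t\right] + C,
\]
and since $U^b_t = X_t$ for $t < T_b$ while after $T_b$ the controlled process restarts at $b$, one gets a renewal-type identity $\rho(b) + C = v_b'(b) + \bE_b[e^{-qT_b}](\rho(b)+C)$, hence $\rho(b)+C = v_b'(b)/(1 - \bE_b[e^{-qT_b}])$ up to checking $\bE_b[e^{-qT_b}] < 1$ (which holds because $q>0$ and $T_b > 0$ a.s.). Alternatively, and more robustly, I would compute $\rho(b') - \rho(b)$ for $b' > b$ by a coupling of $U^{b'}$ and $U^b$ on the same probability space: both are driven by the same $X$ and $N^\eta$, and one shows pathwise that $U^{b'}_t \geq U^b_t$ for all $t$ (the higher barrier pushes at least as high and never lower — this is the analogue of the $\zeta(k) \in [0,\varepsilon]$ induction in the proof of Proposition~3.4, now comparing two different barriers rather than two different starting points). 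Convexity of $f$ gives $f'_+$ nondecreasing, so $\rho(b') \geq \rho(b)$; the polynomial growth from Lemma~3.1(iii)/Corollary~3.3 and Remark~2.4 provides the integrability needed to pass the comparison through the expectation.

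For continuity, I would fix $b$ and let $b' \to b$. Using the coupling again, $U^{b'}_t \to U^b_t$ for each $t$ on the event that the controlled paths are built from the same observation epochs; the only subtlety is at control times, but $T_{b'} \to T_b$ on $\{T_b < \infty\}$ by Lemma~3.2 (and its analogue for later control epochs, which follows by iterating Lemma~3.2 along the renewal structure), and by Remark~2.3 the process $X$ a.s.\ does not hit the level $b$ at any observation time, so the set of discontinuity points has probability zero. Then $f'_+(U^{b'}_t) \to f'_+(U^b_t)$ at a.e.\ $t$ (again using that $U^b$ a.s.\ avoids the at-most-countable set of points where $f'_+$ jumps, by Remark~2.3-type reasoning on the potential measure), and dominated convergence — with dominating function coming from the polynomial bounds in Corollary~3.3 applied uniformly for $b'$ in a compact neighbourhood of $b$ — yields $\rho(b') \to \rho(b)$. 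In fact the cleanest route may be to note $\rho(b) = v_b'(b)/(1-\bE_b[e^{-qT_b}])$ or a similar closed relation and deduce continuity of $\rho$ from the already-established continuity of $x \mapsto v_b'(x)$ in Proposition~3.4 together with continuity of $b \mapsto \bE_b[e^{-qT_b}]$ from Lemma~3.2.

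For the limits: as $b \uparrow \infty$, the coupling shows $U^b_t \uparrow \infty$ for every fixed $t$ (and uniformly on compacts, since for $b$ large enough no control occurs before any fixed horizon, so $U^b_t = X_t + (b - X_0)$ roughly, growing without bound); hence $f'_+(U^b_t) \uparrow f'_+(\infty)$, and monotone convergence gives $\rho(b) \uparrow \int_0^\infty e^{-qt} f'_+(\infty)\,\diff t = f'_+(\infty)/q$, with the strict inequality $f'_+(\infty)/q > -C$ being exactly Assumption~2.1(3) after dividing by $q > 0$. The case $b \downarrow -\infty$ is symmetric: $U^b_t \downarrow -\infty$, $f'_+(U^b_t) \downarrow f'_+(-\infty)$, monotone convergence gives $\rho(b) \downarrow f'_+(-\infty)/q < -C$, again by Assumption~2.1(3). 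The main obstacle I anticipate is the rigorous justification of the pathwise monotone coupling $b' > b \Rightarrow U^{b'}_t \geq U^b_t$ across \emph{all} control epochs (not just the first): this requires an induction over the observation times $T(k)$ parallel to, but slightly more involved than, the $\zeta$-induction already carried out in Proposition~3.4, keeping track of the fact that once the two controlled processes both sit at their respective barriers the gap is exactly $b' - b$ and thereafter can only shrink. Everything else — the convergence theorems and the identification of the limits with Assumption~2.1(3) — is routine once that comparison and the integrable domination (from Corollary~3.3 and the polynomial growth estimates) are in hand.
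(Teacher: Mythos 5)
Your main route is essentially the paper's argument and it works, but three points deserve attention. First, the coupling you single out as the ``main obstacle'' is in fact trivial: for $\rho(b')$ and $\rho(b)$ both controlled processes start \emph{at their respective barriers}, so under the natural coupling (same $X$-increments, same $N^\eta$) one has $U^{b'}_t=U^b_t+(b'-b)$ for \emph{all} $t$ --- the gap is constant, not merely nonincreasing, because $U^{b'}<b'$ iff $U^b<b$, so both are pushed at exactly the same epochs. No induction over observation times is needed; this is precisely the identity the paper exploits, $\rho(b)=\bE_0\left[\int_0^\infty e^{-qt}f^\prime_+(U^0_t+b)\,\diff t\right]$, from which monotonicity, both limits (monotone convergence plus Assumption 2.1(3), integrability from Corollary 3.3) and right-continuity (right-continuity of $f^\prime_+$ plus dominated convergence) follow exactly as you say; Lemma 3.2 and the convergence of control epochs play no role here.

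Second, the one substantive ingredient you leave unproved is the left-continuity step: you need that the potential measure of the \emph{controlled} process $U^b$ charges no single point, so that $U^0_t+b$ spends zero discounted time on the countable set of discontinuities of $f^\prime_+$. Remark 2.3 gives atomlessness only for $X$; the paper's proof devotes its step (i) to transferring it to $U^b$, by decomposing $\bE_x\left[\int_0^\infty e^{-qt}1_{\{y\}}(U^b_t)\,\diff t\right]$ over the control epochs $T_b^{(k)}$ via the strong Markov property, each piece being the (atomless) potential of $X$ killed at $T_b$. This is short but it is a genuine step, and ``Remark 2.3-type reasoning'' should be spelled out in this form. Finally, the ``cleanest route'' you float is not sound as stated: the renewal identity should read $\rho(b)\left(1-\bE_b\left[e^{-qT_b}\right]\right)=v_b^\prime(b)+C\,\bE_b\left[e^{-qT_b}\right]$ (your version is off by an additive $C$), and Proposition 3.4 gives continuity of $x\mapsto v_b^\prime(x)$ for \emph{fixed} $b$, which does not yield continuity of $b\mapsto v_b^\prime(b)$; proving the latter requires the same dominated-convergence and atomlessness analysis as the direct argument, so that shortcut buys nothing.
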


%which is non-decreasing since $f^\prime_+$ is so. 
%Note that {$b^\ast$ is well defined and finite} by Lemma \ref{Lem401}.%since $\lim_{b \uparrow\infty} \rho(b)= f^\prime_+(\infty) /q > 0$ and $\lim_{b \downarrow-\infty} \rho(b)= f^\prime_+(-\infty) /q{< 0}$ by Assumption \ref{Ass201} (3). 

%\blue{[probably we can combine the previous two lemmas and make a lemma saying (1) $\rho$ is finite, (2) it is increasing, (3) it is continuous.]}

%\subsection{Slope of $v_{b^*}$} 
%\blue{We shall now obtain the slope of the candidate value function $v_{b^*}$, which is important for the verification of optimality.}

%\subsection{Verification} \blue{[changed to subsection]}
%\blue{[added this.]}
%In this section, we put the verification lemma, which give the sufficient condition for an optimal strategy, 
%and prove that the periodic barrier strategy at $b^\ast$ satisfies the conditions in the verification lemma. 
%The main theorem of this paper is the following: 
{We now state the main result of the paper.}
\begin{theorem}\label{Thm501}
%{Under Assumptions \ref{Ass201}, \ref{Ass202} and \ref{Ass501} (which will be given later), } \blue{[let me think how to present]}
The periodic barrier strategy at $b^\ast$ is an optimal strategy and thus we have 
$v(x)=v_{b^\ast} (x)$ for $x\in\bR$. 
\end{theorem}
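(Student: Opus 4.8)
The plan is to prove optimality via the standard verification argument: show that $v_{b^*}$ satisfies the appropriate variational inequality (dynamic programming inequality) for this Poisson-observation control problem, and then argue that any admissible strategy has cost at least $v_{b^*}(x)$, while $\pi^{b^*}$ attains it. Concretely, the generator/DPP inequality should take the form
\begin{align*}
\mathcal{L} w(x) - q w(x) + f(x) + \eta \inf_{r \geq 0}\bigl( w(x+r) + C r - w(x)\bigr) \geq 0, \quad x \in \R,
\end{align*}
where $\mathcal{L}$ is the \lev generator of $X$, together with the ``no control is never worse than forced control'' condition $w(x+r) + Cr - w(x) \geq 0$ for all $x$ and $r \geq 0$ (equivalently $w'(x) \geq -C$ everywhere and $w' \leq -C$ below $b^*$, giving $w(x) - w(b^*) = -C(b^*-x)$, i.e.\ $w$ is affine with slope $-C$ on $(-\infty, b^*]$). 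Since control opportunities arrive only at Poisson times, the "obstacle" enters multiplied by $\eta$ inside the generator rather than as a hard constraint $w \leq \mathcal{M}w$, which is exactly why the paper claims the variational inequality is easier to verify here.

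The key steps, in order: (1) Establish the structural properties of $v_{b^*}$ from Propositions \ref{Lem404} and the definition of $b^*$: using \eqref{20}, $v_{b^*}'$ is continuous, and by Lemma \ref{Lem401} and \eqref{33} one checks $v_{b^*}'(x) \geq -C$ for all $x$ with equality (in fact $v_{b^*}'(x) = -C$) for $x \le b^*$ — this gives the obstacle inequality $v_{b^*}(x+r) + Cr \geq v_{b^*}(x)$. The identity $\rho(b^*) + C = 0$ (using continuity of $\rho$) pins down that at $x = b^*$ the "push to $b^*$" action is indifferent. (2) Show $v_{b^*}$ solves the DPP equation with equality on $\{x \geq b^*\}$ (where $\pi^{b^*}$ does nothing until the process next drops below $b^*$) and the inequality on $\{x < b^*\}$; for the latter, using that $v_{b^*}$ is affine with slope $-C$ there, the term $\mathcal{L}v_{b^*}(x) - q v_{b^*}(x) + f(x)$ can be evaluated/bounded, and the $\eta$-term is nonnegative. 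One should verify enough regularity (e.g.\ $v_{b^*} \in C^1$, and $C^2$ or in the domain of $\mathcal{L}$ away from $b^*$, plus polynomial growth from Lemma \ref{Lem301}) to apply Itô–Lévy. (3) Verification: for arbitrary $\pi \in \A$, apply the Itô–Lévy formula to $e^{-qt} v_{b^*}(U^\pi_t)$, take expectations (using the integrability \eqref{1} and polynomial growth to kill the local-martingale part and the boundary terms at infinity), and use the DPP inequality to get $v_{b^*}(x) \leq v_\pi(x)$; then check the inequalities are equalities along $\pi^{b^*}$, so $v_{b^*}(x) = v(x)$.

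The main obstacle I expect is step (2)–(3): proving sufficient smoothness/regularity of $v_{b^*}$ to justify the Itô–Lévy expansion for a general \lev process (finite or infinite activity/variation), and handling the jump-integral term in $\mathcal{L}$ carefully when $X$ can jump across $b^*$ in either direction — near the barrier $v_{b^*}$ is only $C^1$, not $C^2$, so one must either establish that $v_{b^*}$ lies in the extended domain of the generator or work with an approximation/dominated-convergence argument (as is typical in the spectrally one-sided scale-function proofs, but here without scale functions). A secondary technical point is controlling the martingale and tail terms using only the polynomial-growth bounds from Lemma \ref{Lem301} together with Assumption \ref{Ass202}(2), which supplies exponential moments of $X$. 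Establishing \eqref{20}-based monotonicity of $x \mapsto v_{b^*}'(x) + C$ and that it vanishes precisely on $(-\infty, b^*]$ is the conceptual heart that makes the obstacle inequality hold globally; I would prove this directly from the representation of $v_{b^*}'$ and the monotonicity/continuity of $\rho$ in Lemma \ref{Lem401}.
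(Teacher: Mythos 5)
There is a genuine gap: your structural description of $v_{b^*}$ below the barrier is the one from the \emph{continuous-observation} singular control problem, and it is false in the Poisson-observation setting. You require the global ``obstacle'' condition $w(x+r)+Cr-w(x)\geq 0$ for all $r\geq 0$, equivalently $w'\geq -C$ everywhere, and you conclude that $v_{b^*}$ is affine with slope $-C$ on $(-\infty,b^*]$. In this model none of that holds: by Proposition \ref{Lem404} and Lemma \ref{Lem405}, $v_{b^*}'(x)=\bE_x\big[\int_0^\infty e^{-qt}f'_+(U_t^{b^*})\,\diff t\big]$ is nondecreasing with $v_{b^*}'(b^*)=-C$, so $v_{b^*}'\leq -C$ strictly below $b^*$ (indeed $v_{b^*}'(x)\to f'_+(-\infty)/(q+\eta)-C\eta/(q+\eta)<-C$ as $x\to-\infty$, since the controller must wait an exponential time before acting), and consequently $\cM v_{b^*}(x)=C(b^*-x)^+ +v_{b^*}(x\vee b^*)<v_{b^*}(x)$ for $x<b^*$ (Proposition \ref{prop_M}). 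Thus your claimed obstacle inequality fails below $b^*$, your claim that ``the $\eta$-term is nonnegative'' has the wrong sign ($\cM w-w\leq 0$ by definition), and the plan to verify the HJB inequality on $\{x<b^*\}$ by exploiting affineness of $v_{b^*}$ there cannot be carried out. The same confusion appears in your regularity worry that $v_{b^*}$ is ``only $C^1$ at the barrier'': in the Poisson setting $v_{b^*}\in C^2(\R)$ globally once $f$ is smooth enough (Lemma \ref{Lem502}, Condition \ref{Ass501}), and the remaining case is handled by approximating $f$, not by an extended-generator argument at $b^*$.

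What the paper actually does, and what your plan is missing, is that the candidate value function satisfies the equation \eqref{19} with \emph{equality on all of} $\R$ (Lemma \ref{Lem503}), not equality above $b^*$ and an inequality below. This is proved not region by region but via the strong Markov property at the first observation time, which yields the resolvent identity $v_{b^*}(x)=\bE_x\big[\int_0^\infty e^{-(q+\eta)t}\,h(X_t)\,\diff t\big]$ with $h=f+\eta\,\cM v_{b^*}$, from which the process $e^{-(q+\eta)t}v_{b^*}(X_t)+\int_0^t e^{-(q+\eta)s}h(X_s)\,\diff s$ is a local martingale and the generator equation follows by It\^o's formula; the verification lemma (Lemma \ref{Lem501}) then gives $v_{b^*}\leq v$, and admissibility of $\pi^{b^*}$ gives the reverse inequality. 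Your overall skeleton (verification lemma with the $\eta(\cM w-w)$ term, slope analysis via \eqref{20} and $\rho(b^*)+C=0$, polynomial growth, approximation of $f$) is aligned with the paper, but the concrete mechanism you propose for establishing the HJB relation below the barrier is based on false premises and would fail; you need either the paper's martingale/resolvent identity or a direct computation replacing it.
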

\par
{In the remaining {part}, we show Theorem \ref{Thm501}.}
%\subsection{Optimality result} %\blue{[we should not call this a special case, because the assumption is very minor.]}
%\blue{[ok to delete the first two sentences and just start with ``Let...''?]}
%We prove Theorem \ref{Thm501} under an assumption. 
%Before giving the assumption, we define the infinitesimal generator. 
%Let $\cL$ be the operator applied {to} a 
{Acting on a measurable function  $g: \bR\to\bR$ belonging to}
%polynomial growth function $g$ which 
$C^1(\bR)$ (resp., $C^2(\bR)$) when $X$ has bounded (resp., unbounded) variation paths {with at most polynomial growth, define {the} operator}%defined by 
\begin{align}
\cL g(x) := \gamma g^\prime (x) 
+\frac{1}{2}\sigma^2 g^{\prime \prime}(x) 
+\int_{\R \backslash \{0\} }  (g(x+z)-g(x) -g^\prime (x) z1_{\{|z|<1\}}) \Pi(\diff z), \quad   x \in \R.
\end{align}
{Let $(\mathcal{L} - q) g := \mathcal{L} g - q g$.}
%{Note that $\cL$ is well defined for $g$ since $g$ is polynomial growth and by Assumption \ref{Ass202}(2)} \blue{[it is proved later and so we can delete this sentence.]}
%\blue{[moved here]}
Define also for any measurable function $g: \bR\to\bR$, 
\begin{align}
\cM g(x)&:= \inf_{l\geq 0}\{Cl +g(x+l)\},  \quad x \in \R.
\end{align}

The following verification lemma gives a sufficient condition for optimality. The proof is   the same as that for the spectrally negative case in \cite[Lemma 3.1]{Perez_Yamazaki_Bensoussan} {and hence we omit {it}.} 

\begin{lemma}[verification lemma] \label{Lem501}
Let $w: \bR \to \bR$ be of polynomial growth and {belongs to $C^1(\bR)$ (resp., $C^2(\bR)$) when $X$ has bounded (resp., unbounded) variation paths.} 
If it satisfies
\begin{align}
(\cL -q)w(x)+ {\eta} (\cM w(x) -w (x)) + f(x)=0, \quad x\in\bR, \label{19}
\end{align}
then, we have $w(x) \leq v(x)$ for $x\in\bR$. 
\end{lemma}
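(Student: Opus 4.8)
The strategy is a standard martingale/dynamic-programming argument: I take an arbitrary admissible strategy $\pi \in \A$, apply an appropriate change-of-variable (Itô/Meyer) formula to the process $t \mapsto e^{-qt} w(U^\pi_t)$, use the HJB-type equation \eqref{19} to turn the drift part into $-e^{-qt} f(U^\pi_t)\,\diff t$ plus a nonnegative correction coming from the jumps of $R^\pi$ (which occur only at the Poisson times $\mathcal{T}_\eta$), and then take expectations, showing the local-martingale part is a true martingale (or at least has nonpositive expectation in the limit) so that letting the time horizon go to infinity yields $w(x) \le v_\pi(x)$. Taking the infimum over $\pi$ gives $w(x) \le v(x)$.

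More concretely, first I would fix $\pi \in \A$ and a localizing sequence of stopping times $(\theta_m)$ reducing the local martingale below, and write $U^\pi = X + R^\pi$ where $R^\pi$ is pure-jump (piecewise constant, jumping only at times in $\mathcal{T}_\eta$) by \eqref{A001}. Between the Poisson times, $U^\pi$ moves like $X$, so I apply the Itô–Meyer formula for $w \in C^1$ (bounded variation case) or $C^2$ (unbounded variation case) to $e^{-qt}w(U^\pi_t)$; this is legitimate because $w$ has polynomial growth and, by Assumption \ref{Ass202}(2), $X$ has exponential moments, so all the integrals that appear are integrable. The continuous/compensated-jump part of $X$ contributes $\int_0^t e^{-qs}(\cL - q)w(U^\pi_{s-})\,\diff s$ plus a local martingale; the jumps of $R^\pi$ at a Poisson time $T(k)$ contribute $e^{-qT(k)}\big(w(U^\pi_{T(k)-}+\nu^\pi_{T(k)}) - w(U^\pi_{T(k)-})\big)$, and since by definition $\cM w(x) \le Cl + w(x+l)$ for every $l \ge 0$, choosing $l = \nu^\pi_{T(k)}$ gives $w(U^\pi_{T(k)-}+\nu^\pi_{T(k)}) - w(U^\pi_{T(k)-}) \ge \cM w(U^\pi_{T(k)-}) - w(U^\pi_{T(k)-}) - C\nu^\pi_{T(k)}$. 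Summing these jumps and compensating the Poisson process $N^\eta$ (rate $\eta$), the sum of jump contributions equals $\int_0^t \eta\big(\cM w(U^\pi_{s}) - w(U^\pi_s)\big)\,\diff s - C\int_{[0,t]}e^{-qs}\,\diff R^\pi_s$ plus a martingale, up to an inequality in the correct ($\ge$) direction. Combining with \eqref{19}, which says $(\cL-q)w + \eta(\cM w - w) = -f$, the finite-variation part of $e^{-qt}w(U^\pi_t)$ is bounded below by $-\int_0^t e^{-qs}f(U^\pi_s)\,\diff s - C\int_{[0,t]}e^{-qs}\,\diff R^\pi_s$.

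Then I would take expectations over $[0, t \wedge \theta_m]$, so the martingale terms vanish, rearrange to get
\[
w(x) \le \E_x\!\left[\int_0^{t\wedge\theta_m} e^{-qs} f(U^\pi_s)\,\diff s + C\int_{[0,t\wedge\theta_m]} e^{-qs}\,\diff R^\pi_s\right] + \E_x\!\left[e^{-q(t\wedge\theta_m)} w(U^\pi_{t\wedge\theta_m})\right].
\]
Finally, let $m \to \infty$ and $t \to \infty$: the admissibility condition \eqref{1} together with the polynomial growth of $w$ and the polynomial-growth moment bounds on $U^\pi$ (controlled via $X$ using Assumption \ref{Ass202}(2) and the fact that $R^\pi$ contributes a finite expected discounted amount) allow dominated/monotone convergence on the cost terms, while the boundary term $\E_x[e^{-q(t\wedge\theta_m)}w(U^\pi_{t\wedge\theta_m})]$ vanishes as $t\to\infty$ because $e^{-qt}$ decays exponentially and $w(U^\pi_t)$ grows at most polynomially in a controlled $L^1$ sense. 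This yields $w(x) \le v_\pi(x)$, and taking the infimum over $\pi \in \A$ gives $w(x) \le v(x)$.

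The main obstacle is the careful justification of the integrability and limiting steps: verifying that the local martingale arising from the Itô–Meyer formula is genuinely reducible (needs the exponential-moment assumption on $\Pi$ and the polynomial growth of $w$ and $w'$, $w''$), that the boundary term vanishes, and that one may interchange limit and expectation in the cost functional using only \eqref{1} and the a priori polynomial growth bounds — in particular handling the two variation regimes ($C^1$ with bounded variation, $C^2$ with unbounded variation) uniformly. Since the proof is asserted to be identical to that of \cite[Lemma 3.1]{Perez_Yamazaki_Bensoussan}, I would organize it to mirror that argument, the only new ingredient being that the process $X$ here is a general (two-sided) Lévy process rather than spectrally negative — but this affects none of the above steps, since the operator $\cL$ and the Itô formula are written for general $\Pi$ and the exponential-moment Assumption \ref{Ass202}(2) supplies all the required integrability.
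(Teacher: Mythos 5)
Your proposal is correct and takes essentially the same approach as the paper: the paper omits the proof altogether, stating it is identical to \cite[Lemma 3.1]{Perez_Yamazaki_Bensoussan}, and your plan reconstructs exactly that standard verification argument (It\^o--Meyer formula applied to $e^{-qt}w(U^\pi_t)$, the bound $\cM w(x)\le Cl+w(x+l)$ at the Poisson-compensated control times, the identity \eqref{19}, localization, and passage to the limit using \eqref{1} and the polynomial growth of $w$). The one step requiring the same care as in the cited reference is the transversality term $\E_x\bigl[e^{-q(t\wedge\theta_m)}w(U^\pi_{t\wedge\theta_m})\bigr]$, whose vanishing does not follow from \eqref{1} alone for arbitrary admissible $\pi$ and polynomial growth of degree larger than one; you correctly identify this as the main obstacle, and it is precisely the integrability bookkeeping that the cited proof supplies.
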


{Before confirming these sufficient conditions for $w = v_{b^*}$, we explicitly compute $\mathcal{M} v_{b^*}$.}
{To this end, we show the following, which is a direct consequence of Proposition \ref{Lem404}.}
% we have the slope of $v_{b^*}$ written in terms of the process $U^{b^*}$.}
\begin{lemma}\label{Lem405}
%\blue{[changed from Lemma to Proposition]}
For $x\in\bR$, we have 
%\begin{align}
$v_{b^\ast}^\prime (x) = \bE_x \left[\int_0^\infty e^{-qt} f^\prime_+ (U^{b^\ast}_t) \diff  t\right].$
%\end{align}
\end{lemma}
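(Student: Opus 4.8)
The goal is to show that for $x \in \mathbb{R}$,
\[
v_{b^\ast}^\prime (x) = \bE_x \left[\int_0^\infty e^{-qt} f^\prime_+ (U^{b^\ast}_t) \diff t\right].
\]
Starting from Proposition \ref{Lem404}, which gives $v_{b^\ast}^\prime(x) = \bE_x[\int_0^{T_{b^\ast}} e^{-qt} f^\prime_+(X_t)\diff t] - C\,\bE_x[e^{-qT_{b^\ast}}]$, the plan is to rewrite the right-hand side of the desired identity by splitting the time integral at the first control time $T_{b^\ast}$. On $[0, T_{b^\ast})$ we have $U^{b^\ast}_t = X_t$, so that piece already matches the first term in Proposition \ref{Lem404}. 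It thus remains to identify the post-$T_{b^\ast}$ contribution $\bE_x[\int_{T_{b^\ast}}^\infty e^{-qt} f^\prime_+(U^{b^\ast}_t)\diff t]$ with $-C\,\bE_x[e^{-qT_{b^\ast}}]$.

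The key step is a strong-Markov / regeneration argument at $T_{b^\ast}$. At that time the controlled process is pushed exactly to $b^\ast$, and because interarrival times are exponential (memoryless) and $X$ has stationary independent increments, the process $\{U^{b^\ast}_{T_{b^\ast}+s}\}_{s\ge0}$ is, conditionally on $\mathcal{F}_{T_{b^\ast}}$ and on $\{T_{b^\ast}<\infty\}$, a copy of the periodic barrier process $U^{b^\ast}$ started at $b^\ast$. Hence
\[
\bE_x\!\left[\int_{T_{b^\ast}}^\infty e^{-qt} f^\prime_+(U^{b^\ast}_t)\diff t\right]
= \bE_x\!\left[e^{-qT_{b^\ast}}\right]\,\bE_{b^\ast}\!\left[\int_0^\infty e^{-qs} f^\prime_+(U^{b^\ast}_s)\diff s\right]
= \bE_x\!\left[e^{-qT_{b^\ast}}\right]\rho(b^\ast),
\]
using the definition \eqref{def_rho} of $\rho$ and the finiteness from Corollary \ref{Lem303} to justify Fubini and the conditioning. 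Now the definition \eqref{33} of $b^\ast$ together with the continuity of $\rho$ in Lemma \ref{Lem401} gives $\rho(b^\ast) + C = 0$, i.e. $\rho(b^\ast) = -C$, so the post-$T_{b^\ast}$ term equals $-C\,\bE_x[e^{-qT_{b^\ast}}]$, matching the second term in Proposition \ref{Lem404}. Adding the two pieces yields the claim.

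A small technical point to handle: $b^\ast$ could in principle be a boundary value where $\rho(b^\ast)+C = 0$ fails as an equality if $\rho$ had a jump, but Lemma \ref{Lem401} rules this out since $\rho$ is continuous; and on $\{T_{b^\ast}=\infty\}$ both $e^{-qT_{b^\ast}}$ and the tail integral vanish, so that event contributes nothing. I expect the main obstacle to be stating the regeneration step cleanly — carefully invoking the strong Markov property of the pair $(X, N^\eta)$ at the stopping time $T_{b^\ast}$ and verifying that $U^{b^\ast}$ restarted at $T_{b^\ast}$ is genuinely distributed as $U^{b^\ast}$ under $\bP_{b^\ast}$ — but this is exactly the structure already used implicitly in the construction \eqref{32} of the $T_{b^\ast}^{(n)}$, so it should go through without difficulty.
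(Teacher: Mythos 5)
Your proof is correct and follows essentially the same route as the paper: the paper also uses $\rho(b^\ast)+C=0$ (from Lemma \ref{Lem401} and the definition \eqref{33} of $b^\ast$) together with the strong Markov property at $T_{b^\ast}$ to identify $\bE_x\bigl[\int_{T_{b^\ast}}^\infty e^{-qt} f^\prime_+(U^{b^\ast}_t)\diff t\bigr]$ with $-C\,\bE_x\bigl[e^{-qT_{b^\ast}}\bigr]$, and then substitutes into \eqref{20}. Your write-up just spells out the regeneration and integrability details that the paper leaves implicit.
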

\begin{proof}
%  by Lemma \ref{Lem404}, 
%\begin{align*}
%v_{b^\ast}^\prime (x)=\bE_x \left[\int_0^{T_{b^\ast}} e^{-qt} f^\prime_+(X_t) \diff t \right]-C\bE_x \left[ e^{-qT_{b^\ast}}\right]
%=\bE_x \left[\int_0^{T_{b^\ast}} e^{-qt} f^\prime_+(X_t) \diff t \right]+\bE_x \left[ e^{-qT_{b^\ast}}\right]\rho (b^\ast).
%\end{align*}
%\blue{[shortened the proof.]}
By Lemma \ref{Lem401} and the definition of $b^\ast$, we have $\rho (b^\ast)+C=0$. This together with the  strong Markov property gives
$-C\bE_x \left[ e^{-qT_{b^\ast}}\right] = \bE_x \left[ e^{-qT_{b^\ast}}\right]\rho (b^\ast) =
\bE_x [\int_{T_{b^\ast}}^\infty e^{-qt} f^\prime_+(U^{b^\ast}_t) \diff t ]$ {where we recall $T_{b^\ast} := T_{b^\ast}^{(1)}$ is the first control time under the policy $\pi^{b^\ast}$. }
Subsituting this in \eqref{20} gives the result.
\end{proof}
%{This verifies that $v_{b^*}$ is convex and hence implies the following.}
%First we write $\mathcal{M} v_{b^*}$ explicitly as follows.

%\blue{[moved the following outside of the proof] }
From (i) of the proof of Proposition \ref{Lem404}, {for each $t \geq 0$,} $U_t^{b^*}$ is monotonically increasing in the start value $X_0 = x$. By this and Lemma \ref{Lem405}, the {derivative $v_{b^\ast}'$} is nondecreasing. In addition, by the definition of $b^\ast$ {and the continuity of $\rho$ as in Lemma \ref{Lem401}}, we have $v_{b^\ast}^\prime(b^\ast)=-C$. 
Thus, we have 
\begin{align}
v_{b^\ast}^\prime (x)
\begin{cases}
\leq -C, \quad &x< b^\ast, \\
\geq -C, \quad &x\geq b^\ast.
\end{cases}
\end{align}
Since the derivative of the function $l \mapsto C l + v_{b^\ast} (x+l)$ is equal to $l\mapsto C+v_{b^\ast}^\prime (x+l)$, it is minimized when $l = (b^* - x)^+$, {showing the following.} %\blue{In sum, we have the following.}

\begin{proposition} \label{prop_M} We have
%\begin{align}
%$\cM v_{b^\ast}(x)= {C (b^\ast - x )^+  + v_{b^*}(x \vee b^*),}$ {for all $x \in \R$.}
{
\[
\cM v_{b^\ast}(x)= \left\{ \begin{array}{ll}
v_{b^*}(x), & x \geq b^*, \\ C (b^\ast - x )  + v_{b^*}(b^*), & x < b^*. 
\end{array}
\right.
\]
}
% \label{23}
%\begin{cases}
%C(b^\ast - x )+ v_{b^\ast} (b^\ast)\quad &x < b^\ast, \\
%v_{b^\ast}(x)\quad & x\geq b^\ast.
%\end{cases}
%\end{align}
\end{proposition}
%\begin{proof}%[Proof of Proposition \ref{prop_M}]
%%We compute $\cM v_{b^\ast}$. 
%
%\end{proof}
}

%\par
%We prove that $v_{b^\ast}$ is sufficiently smooth and satisfies \eqref{19}. 

{Regarding the smoothness of $v_{b^*}$, it belongs to $C^1(\bR)$ by {Proposition} \ref{Lem404}. This is sufficient for the case of bounded variation but a care is needed for the unbounded variation case.  We temporarily assume the following, {to first consider the case {the} $C^2$ property of $v_{b^*}$ is guaranteed.}
%In the case of unbounded variation, it is guaranteed to belong to $C^2(\bR)$ when $f \in C^{2}(\bR)$.
\begin{cond}\label{Ass501}
When $X$ has unbounded variation paths, {the running cost function} $f$ belongs to $C^{2}(\bR)$ and $f^{\prime \prime}$ has polynomial growth in the {tails}. %\blue{[maybe we can relax this later?]}{[I guess we can relax as classical cases.]}
\end{cond}
{The proof of the following lemma is deferred to Appendix \ref{proof_Lem502}.}
\begin{lemma}\label{Lem502} %\blue{[I combined the two lemmas.]}
{Suppose Condition \ref{Ass501} holds.}
When $X$ has unbounded variation paths, the function $v_{b^\ast}$ belongs to $C^2(\bR)$. 
\end{lemma}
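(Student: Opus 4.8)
The plan is to obtain the second derivative of $v_{b^\ast}$ by differentiating once more the first--derivative representation of Lemma~\ref{Lem405}, essentially recycling the proof of Proposition~\ref{Lem404} one order higher. Under Condition~\ref{Ass501} we have $f\in C^2(\bR)$ with $f''$ of polynomial growth, so $f_+'=f'$ and Lemma~\ref{Lem405} reads $v_{b^\ast}'(x)=\bE_x[\int_0^\infty e^{-qt}f'(U_t^{b^\ast})\,\diff t]$; this functional has exactly the structure of the ``running-cost'' functional $v_b^{(1)}$ treated in Proposition~\ref{Lem404}, with the convex integrand $f$ replaced by the $C^1$ integrand $f'$. (Equivalently, conditioning on the first Poisson observation time and using Proposition~\ref{prop_M} yields the renewal identity $v_{b^\ast}=\mathcal{R}_{q+\eta}[f+\eta\,\mathcal{M}v_{b^\ast}]$, where $\mathcal{R}_{q+\eta}$ is the $(q+\eta)$-resolvent of $X$; differentiating it twice leads to the same formula for $v_{b^\ast}''$, but the direct route below avoids any discussion of resolvent smoothness.)

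First I would fix $b^\ast$, $x$ and $0<\varepsilon<\bar\varepsilon$, couple the controlled processes started from $x$ and $x+\varepsilon$ through $X^{[\varepsilon]}=X+\varepsilon$, and invoke part~(i) of the proof of Proposition~\ref{Lem404} together with \eqref{14}: the difference $U_t^{[\varepsilon],b^\ast}-U_t^{b^\ast}$ is nonincreasing, takes values in $[0,\varepsilon]$, equals $\varepsilon$ on $[0,T_{b^\ast})$ and equals $0$ on $[T_{b^\ast-\varepsilon},\infty)$ (recall $T_{b^\ast}^{[\varepsilon]}=T_{b^\ast-\varepsilon}$). Splitting the difference quotient accordingly, and using that the two controlled processes agree after $T_{b^\ast-\varepsilon}$ and that $U_t^{b^\ast}=X_t$ on $[0,T_{b^\ast})$,
\begin{align*}
\frac{v_{b^\ast}'(x+\varepsilon)-v_{b^\ast}'(x)}{\varepsilon}
&=\bE_x\left[\int_0^{T_{b^\ast}}e^{-qt}\,\frac{f'(X_t+\varepsilon)-f'(X_t)}{\varepsilon}\,\diff t\right]\\
&\quad+\bE_x\left[\int_{T_{b^\ast}}^{T_{b^\ast-\varepsilon}}e^{-qt}\,\frac{f'(U_t^{[\varepsilon],b^\ast})-f'(U_t^{b^\ast})}{\varepsilon}\,\diff t\right].
\end{align*}
In the first term the integrand converges pointwise to $f''(X_t)$ (since $f\in C^2$) and is dominated by $\sup_{y\in[X_t,X_t+\bar\varepsilon]}|f''(y)|$; by Condition~\ref{Ass501} this bound is of polynomial growth and, as in Remark~\ref{Rem201} (using the polynomial moments of $X_t$ guaranteed by Assumption~\ref{Ass202}(2)), $\bE_x[\int_0^\infty e^{-qt}\sup_{y\in[X_t,X_t+\bar\varepsilon]}|f''(y)|\,\diff t]<\infty$, so dominated convergence gives that the first term tends to $\bE_x[\int_0^{T_{b^\ast}}e^{-qt}f''(X_t)\,\diff t]$. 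For the second term, $|f'(U_t^{[\varepsilon],b^\ast})-f'(U_t^{b^\ast})|\leq\varepsilon\sup_{y\in[U_t^{b^\ast},U_t^{b^\ast}+\bar\varepsilon]}|f''(y)|$, so its modulus is at most $\bE_x[\int_{T_{b^\ast}}^{T_{b^\ast-\varepsilon}}e^{-qt}\sup_{y\in[U_t^{b^\ast},U_t^{b^\ast}+\bar\varepsilon]}|f''(y)|\,\diff t]$, which vanishes as $\varepsilon\downarrow0$ by monotone convergence and Lemma~\ref{Lem302} ($T_{b^\ast-\varepsilon}\downarrow T_{b^\ast}$ on $\{T_{b^\ast}<\infty\}$), the quantity being finite for $\varepsilon<\bar\varepsilon$ by the same polynomial-growth estimate. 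Hence the right derivative of $v_{b^\ast}'$ at $x$ equals $\bE_x[\int_0^{T_{b^\ast}}e^{-qt}f''(X_t)\,\diff t]$. Running the identical argument with $X^{[-\varepsilon]}=X-\varepsilon$ and $T_{b^\ast}^{[-\varepsilon]}=T_{b^\ast+\varepsilon}$ produces the left derivative, which coincides with the right one because $f''$ is continuous (so, unlike in Proposition~\ref{Lem404}, no appeal to Remark~\ref{remark_mass} is needed). Thus $v_{b^\ast}$ is twice differentiable with $v_{b^\ast}''(x)=\bE_x[\int_0^{T_{b^\ast}}e^{-qt}f''(X_t)\,\diff t]$.

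It then remains to verify that $v_{b^\ast}''$ is continuous, and I would do this exactly as in step~(iv) of the proof of Proposition~\ref{Lem404}: writing $\bE_{x+\varepsilon}[\int_0^{T_{b^\ast}}e^{-qt}f''(X_t)\,\diff t]=\bE_x[\int_0^{T_{b^\ast-\varepsilon}}e^{-qt}f''(X_t+\varepsilon)\,\diff t]$ and bounding its distance from $v_{b^\ast}''(x)$ by $\bE_x[\int_0^{T_{b^\ast}}e^{-qt}|f''(X_t+\varepsilon)-f''(X_t)|\,\diff t]+\bE_x[\int_{T_{b^\ast}}^{T_{b^\ast-\varepsilon}}e^{-qt}|f''(X_t+\varepsilon)|\,\diff t]$, both of which tend to $0$ as $\varepsilon\downarrow0$ by dominated and monotone convergence (continuity and polynomial growth of $f''$) and Lemma~\ref{Lem302}; left-continuity follows symmetrically. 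This gives $v_{b^\ast}\in C^2(\bR)$ (indeed the argument does not use the variation type of $X$, but it is only in the unbounded-variation case that this regularity is needed).

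The only real work is the integrability bookkeeping in the two convergence steps: one must exhibit fixed integrable dominating functions on the random intervals $[T_{b^\ast},T_{b^\ast-\varepsilon})$, which is done by combining the polynomial growth of $f''$ from Condition~\ref{Ass501} with the polynomial moment estimates for $X$ from Assumption~\ref{Ass202}(2), in the spirit of Lemma~\ref{Lem301}, Corollary~\ref{Lem303} and Remark~\ref{Rem201}. No genuinely new phenomenon appears relative to Proposition~\ref{Lem404}; the one conceptual observation is that Lemma~\ref{Lem405} presents $v_{b^\ast}'$ as a functional of the same form as $v_b^{(1)}$, so that the first-passage/coupling differentiation argument transfers verbatim to one order higher, while the hypothesis $f\in C^2$ is precisely what makes the one-sided derivatives agree without the atomlessness input of Remark~\ref{remark_mass}.
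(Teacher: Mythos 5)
Your proposal is correct and follows essentially the same route as the paper, which simply defers to the proof of \cite[Lemma 9]{NobYam2020} (with the classical reflected process replaced by the Poissonian one) to obtain $v_{b^\ast}''(x)=\bE_x\bigl[\int_0^{T_{b^\ast}}e^{-qt}f''(U_t^{b^\ast})\,\diff t\bigr]$ and then proves continuity by dominated convergence using the polynomial growth of $f''$, \eqref{14} and Lemma \ref{Lem302}; your formula is identical since $U_t^{b^\ast}=X_t$ on $[0,T_{b^\ast})$. In effect you have written out explicitly the coupling/difference-quotient argument that the paper invokes by citation, including the correct observation that $f\in C^2$ makes the one-sided derivatives agree without Remark \ref{remark_mass}.
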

%In the remaining arguments, we assume 
We assume Condition \ref{Ass501} temporarily for Lemma \ref{Lem503}. However, Condition \ref{Ass501} can be completely relaxed by following the arguments in Section 4.2 in \cite{NobYam2020}. We later provide a brief remark on how {Condition \ref{Ass501}} can be {removed} in the proof of Theorem \ref{Thm501}.
%the value function of a general case can be approximated by those of the cases 
}
%
%In the case of bounded variation, $v_{b^\ast}$ belongs to $C^1(\bR)$ by \blue{Proposition} \ref{Lem404}. 
%In the case of unbounded variation, we give the following assumption. 
%
%{In this paper, we prove the main result under Assumption \ref{Ass501}. Assumption \ref{Ass501} can be relaxed, but we omit the details of this in this paper, since it is just the same argument as for Section 4.2 in \cite{NobYam2020}. 
%%The main result holds without . We can show this by making exactly the same argument as we did for Section 4.2 in \cite{NobYam2020}.
%}
%%Under this assumption, we have the following lemma. 
%
%\blue{[moved the proof to the appendix]}

By {Proposition} \ref{Lem404} and {Lemma} \ref{Lem502}, the function ${x \mapsto v_{b^\ast}(x)}$ is sufficiently smooth to apply $\cL$ {(under Condition \ref{Ass501})}. 
\begin{lemma}\label{Lem503} {Suppose Condition \ref{Ass501} holds.}
For $x \in\bR$, we have 
%\begin{align}
%\cL v_{b^\ast}(x) - (q+\eta)v_{b^\ast}(x) + h(x)=0, \label{21}
%\end{align}
%we have
$(\cL -q)v_{b^\ast}(x)+{\eta} (\cM v_{b^\ast}(x) -v_{b^\ast} (x)) + f(x)=0$.
%where %\blue{[below ok to change $y$ to $x$?]}
\end{lemma}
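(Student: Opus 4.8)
The plan is to derive the identity $(\cL-q)v_{b^\ast}(x) + \eta(\cM v_{b^\ast}(x) - v_{b^\ast}(x)) + f(x) = 0$ directly from the probabilistic representation of $v_{b^\ast}$ by conditioning on the first event of $N^\eta$ and applying an infinitesimal (Dynkin-type) argument. First I would write, for each $x\in\R$, a one-step renewal decomposition of $v_{b^\ast}(x)$ by splitting the time horizon at $T(1)\sim\mathrm{Exp}(\eta)$. On $[0,T(1))$ the strategy $\pi^{b^\ast}$ does nothing, so the controlled process coincides with the uncontrolled $X$; at $T(1)$ the strategy applies a lump payment of size $C(b^\ast - X_{T(1)})^+$ and restarts from $X_{T(1)}\vee b^\ast$. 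Using the independence of $N^\eta$ and $X$ and the strong Markov property, this yields
\begin{align}
v_{b^\ast}(x) = \bE_x\!\left[\int_0^{T(1)} e^{-qt} f(X_t)\,\diff t\right] + \bE_x\!\left[e^{-qT(1)}\Big(C(b^\ast - X_{T(1)})^+ + v_{b^\ast}(X_{T(1)}\vee b^\ast)\Big)\right].
\end{align}
By Proposition \ref{prop_M}, the term in the big parentheses is exactly $\cM v_{b^\ast}(X_{T(1)})$, so integrating out the exponential variable $T(1)$ (replacing $e^{-qT(1)}$ and the stopped integral by their expectations, which introduces the factor $\eta/(\eta+q)$ and the resolvent of $X$ killed at rate $\eta+q$) gives an integral equation for $v_{b^\ast}$ of the schematic form $v_{b^\ast} = R_{\eta+q}\big(f + \eta\,\cM v_{b^\ast}\big)$, where $R_{\eta+q}$ is the $(\eta+q)$-resolvent of $X$.

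Next I would convert this resolvent identity into the stated pointwise differential equation. Under Condition \ref{Ass501}, Lemma \ref{Lem502} (together with Proposition \ref{Lem404}) guarantees $v_{b^\ast}\in C^2(\R)$ in the unbounded-variation case and $C^1(\R)$ in the bounded-variation case, with polynomial growth; and $\cM v_{b^\ast}$ inherits the same regularity from Proposition \ref{prop_M}. Then the standard fact that for a \lev process $X$ and a sufficiently smooth, polynomially growing $h$ one has $R_{\eta+q}h \in \mathrm{Dom}(\cL)$ with $((\eta+q) - \cL)R_{\eta+q}h = h$ — this is where Assumption \ref{Ass202}(2) is used, to control the \lev-measure integral in $\cL$ against polynomial growth, exactly as in Remark \ref{Rem201} and \cite{Yam2017} — applied to $h = f + \eta\,\cM v_{b^\ast}$ gives $((\eta+q)-\cL)v_{b^\ast} = f + \eta\,\cM v_{b^\ast}$, i.e. $(\cL - q)v_{b^\ast} - \eta v_{b^\ast} + f + \eta\,\cM v_{b^\ast} = 0$, which is precisely the claimed identity after rearranging $\eta(\cM v_{b^\ast} - v_{b^\ast})$.

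An alternative, perhaps cleaner route avoiding the resolvent machinery is a direct Dynkin/It\^o argument: apply the It\^o–\lev formula to $e^{-(q+\eta)t}v_{b^\ast}(X_t)$ between $0$ and a small time, take expectations to kill the martingale part, and combine with the first-event decomposition; letting the time shrink to $0$ and using the continuity of $\cL v_{b^\ast}$ and of $\cM v_{b^\ast}$ yields the generator equation at the point $x$. I would likely present this version since the paper already establishes all the needed smoothness and growth bounds.

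The main obstacle I anticipate is justifying the application of $\cL$ to $v_{b^\ast}$ rigorously in the unbounded-variation/infinite-activity case: one must verify that the compensated jump integral $\int(g(x+z)-g(x)-g'(x)z1_{\{|z|<1\}})\Pi(\diff z)$ converges absolutely and that the It\^o formula (or the resolvent identity) applies despite $v_{b^\ast}$ only being $C^2$ rather than having bounded derivatives — polynomial growth of $v_{b^\ast}''$ must be matched against the exponential moment bound on $\Pi$ from Assumption \ref{Ass202}(2). This is handled by the same truncation-and-dominated-convergence estimates underlying Lemma \ref{Lem502} and Remark \ref{Rem201}, so the bookkeeping is routine but needs care. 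The second, minor subtlety is that Proposition \ref{prop_M} expresses $\cM v_{b^\ast}(x)$ as $C(b^\ast-x)^+ + v_{b^\ast}(x\vee b^\ast)$, a function that is only $C^1$ across $x=b^\ast$; but it enters the equation undifferentiated (as $\cM v_{b^\ast}(X_{T(1)})$, evaluated pointwise), so no extra smoothness of $\cM v_{b^\ast}$ is required — only measurability and polynomial growth, both of which are immediate.
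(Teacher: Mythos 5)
Your proposal is correct and follows essentially the same route as the paper: the same one-step conditioning on $T(1)$ yielding the resolvent identity $v_{b^\ast}(x)=\bE_x\big[\int_0^\infty e^{-(q+\eta)t}h(X_t)\,\diff t\big]$ with $h=f+\eta\,\cM v_{b^\ast}$ (via Proposition \ref{prop_M}), followed by the established $C^1$/$C^2$ regularity of $v_{b^\ast}$ to pass to the pointwise generator equation. The paper implements this last step by showing that $M_t=e^{-(q+\eta)t}v_{b^\ast}(X_t)+\int_0^t e^{-(q+\eta)s}h(X_s)\,\diff s$ is a local martingale and applying It\^o's formula, which is exactly your alternative Dynkin/It\^o argument (and avoids needing smoothness of $h$ itself, the only loose point in your primary resolvent formulation).
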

\begin{proof}
{ It suffices to show
%\begin{align}
$\cL v_{b^\ast}(x) - (q+\eta)v_{b^\ast}(x) + h(x)=0$ %\label{21}
%\end{align}
with
\begin{align}
h(x) := f(x) + \eta\cM v_{b^\ast}(x)  = f(x)+ {\eta} C (b^\ast -x)^+  + {\eta} v_{b^\ast} (x\lor b^\ast), 
\end{align}
where the second equality holds by Proposition \ref{prop_M}.}
%\blue{[moved here] 
{Because $v_{b^*}$ is smooth enough to apply Ito's formula {(cf. Proposition \ref{Lem404} and Lemma \ref{Lem502})}, it is enough to show that}
%Considering the fact above, 
the process $\{M_t: t\geq 0 \}$, where
\begin{align}
M_t := e^{-(q+\eta)t}v_{b^\ast}(X_t)+\int_0^t e^{-(q+\eta)s} h(X_s) \diff s,
\end{align}
is a {local} martingale with {respect to} the natural filtration $\{\cF^X_t : t\geq 0\}$ generated {by} $X$. {See the proof of \cite[(12)]{BifKyp2010}.}

By the strong Markov property {and because $U^{b^*}_t = X_t$ for $t < T(1)$}, we have, for $x \in\bR$, 
\begin{align*}
v_{b^\ast}(x)  &= \bE_x\left[\int_0^{T(1)} e^{-qt} f({X_t}) \diff t \right]
+C\bE_x \left[  e^{-q T(1)}   ({b^\ast- X_{T(1)}}  )^+  \right]
\\ 
&\qquad%\qquad \qquad\qquad 
 +\bE_x \left[e^{-qT(1)} v_{b^\ast}({ X_{T(1)}\vee b^*} ) \right]\\
&=\bE_x\left[\int_0^{\infty} e^{-(q+\eta)t} f(X_t) \diff t \right]
+ \eta C\bE_x \left[  \int_0^\infty e^{-(q+\eta) t}  (b^\ast- X_{t}  )^+ \diff t \right] \\
&\qquad%\qquad \qquad\qquad 
+ \eta \bE_x \left[\int_0^\infty e^{-(q+\eta)t} v_{b^\ast}(X_t \lor b^\ast  ) \diff t \right]\\
&=\bE_x\left[\int_0^{\infty} e^{-(q+\eta)t} h(X_t) \diff t \right]. %\label{29}
\end{align*}
 %\blue{[I deleted $\mathbb{F}^X$ because it is not used.]}
%\blue{Indeed,} we have,
This together with the strong Markov property gives, for $t\geq 0$ and $\tau_{[n]}:= \inf\{t >0: |X_t| > n\}$ with $n\in\N$, %\blue{[changed $T$ to $\tau$]}
\begin{align}
\bE_x&\left[\int_0^{\infty} e^{-(q+\eta) s} h(X_s) \diff s \mid \cF^X_{t\land \tau_{[n]}}  \right]\\
&\qquad=\int_0^{t\land \tau_{[n]}} e^{-(q+\eta)s} h(X_s) \diff s +\bE_x\left[\int_{t\land \tau_{[n]}}^{\infty} e^{-(q+\eta)s} h(X_s) \diff s \mid \cF^X_{t\land \tau_{[n]}}  \right]\\
&\qquad =\int_0^{t\land \tau_{[n]}} e^{-(q+\eta)s} h(X_s) \diff s +e^{-(q+\eta) (t\land \tau_{[n]})}v_{b^\ast} (X_{t\land \tau_{[n]}}) = M_{t\land \tau_{[n]}}. 
\end{align}
{By the tower property of conditional expectations, $M$ is a local martingale.}
%{[Should we give the proof of $\bE_x\left[  |M_t|\right]<\infty$]}
%where in the last equality, we used \eqref{29}.
%%\blue{[maybe we need to write $\mathcal{F}_t^X$ so that this filtration is only generated by $X$. The filtration for the strategy is generated by $(X,N)$.] }
%By the same reasoning as that of the proof of \cite[(12)]{BifKyp2010}, we obtain \eqref{21}. 
\end{proof}

%By Proposition \ref{prop_M}, $h(x) = f(x)+ \blue{\eta} \cM v_{b^\ast}(x).$ Substituting this in the expression in  Lemma \ref{Lem503} shows the following.
%
%\begin{lemma}\label{Lem504}
%For $x \in\bR$, we have
%$(\cL -q)v_{b^\ast}(x)+\blue{\eta} (\cM v_{b^\ast}(x) -v_{b^\ast} (x)) + f(x)=0, \quad x\in\bR$.
%\end{lemma}
%\begin{proof}
%We compute $\cM v_{b^\ast}$. 
%From \blue{Proposition} \ref{Lem405}, the {derivative $v_{b^\ast}'$} is non-decreasing. In addition, by the definition of $b^\ast$, we have $v_{b^\ast}^\prime(b^\ast)=-C$. 
%Thus, we have 
%\begin{align}
%v_{b^\ast}^\prime (x)
%\begin{cases}
%\leq -C, \quad &x< b^\ast, \\
%\geq -C, \quad &x\geq b^\ast.
%\end{cases}
%\end{align}
%Since the derivative of the function $l \mapsto C l + v_{b^\ast} (x+l)$ is equal to $l\mapsto C+v_{b^\ast}^\prime (x+l)$, {it is minimized when $l = (b^* - x)^+$. Hence,}
%\begin{align}
%\cM v_{b^\ast}(x)= {C (b^\ast - x )^+  + v_{b^*}(x \vee b^*),} \label{23}
%%\begin{cases}
%%C(b^\ast - x )+ v_{b^\ast} (b^\ast)\quad &x < b^\ast, \\
%%v_{b^\ast}(x)\quad & x\geq b^\ast.
%%\end{cases}
%\end{align}
%{and thus $h(x) = f(x)+ \blue{\eta} \cM v_{b^\ast}(x).$ Substituting this in the expression in  Lemma \ref{Lem503} completes the proof.
%}
%%By Lemma \ref{Lem503} and \eqref{23}, we obtain \eqref{22}. 
%\end{proof}

\begin{proof}[{Proof of Theorem \ref{Thm501}}]
%\blue{First suppose Condition \ref{Ass501} holds.}
By %Lemma  \ref{Lem404}, 
{
{Lemmas \ref{Lem301}(iii), \ref{Lem502} and  \ref{Lem503}} and Proposition \ref{Lem404},} the function $v_{b^\ast}$ satisfies the conditions in Lemma \ref{Lem501}. %{[Should we write that $v_b$ is polynomially growth?]}. \blue{[Yes, i think so. Is it easy to show this?]}
%{
%By \eqref{31}, we have 
%\begin{align}
%|v_{b^\ast}(x)|\leq \E_x \left[ \int_0^\infty e^{-qt} (|f(X_t)| + |f(U^{b, \infty}_t)|) \diff t\right]
%+|C| \bE_x \left[ \int_{[0, \infty)}e^{-qt} \diff R^{b,\infty}_t\right]. \label{34}
%\end{align}
%The right hand side of \eqref{34} is polynomial growth by Remark \ref{Rem201} and the proof of \cite[Lemma 3]{NobYam2020} and so is $v_{b^\ast}$.  
%}
Thus,  $v_{b^*}(x) \leq v(x)$ for all $x \in \R$. Because $\pi^{b^*}$ is admissible as in {Proposition \ref{Lem301_2},} the reverse inequality also holds.  This completes the proof {for the case Condition \ref{Ass501} holds.}

{For the case Condition \ref{Ass501} is violated, we {can} write the cost function $f$ in terms of the limit of a sequence of $C^2(\R)$ functions for which Condition \ref{Ass501} is fulfilled and the optimality of a barrier strategy holds. We omit the details because the proof is exactly the same as those proofs given in Section 4.2 in \cite{NobYam2020}.
}
\end{proof}

%\blue{
%\begin{remark}   where the cost function $f$ and its corresponding value function for the case Condition \ref{Ass501} is violated is approximated by those satisfying Condition \ref{Ass501}.  
%\end{remark}
%}

%\subsection{Extension}\label{Sec502}
%Theorem \ref{Thm501} is correct without Assumptions \ref{Ass202}(1) and \ref{Ass501}. 
%To relax Assumptions \ref{Ass202}(1) and \ref{Ass501}, we need give the same approximation as \cite[Section 4.2]{NobYam2020} and \cite[Section 5]{NobYam2020}, respectively. 
%The approximations are the same, thus we omit the proof.  \blue{[Let me work on this part later to make it pursuasive.]}

\section{Convergence as $\eta \to \infty$}  \label{section_convergence}

{In this section, we verify the convergence of the optimal solutions to the classical case \cite{NobYam2020} as the rate of observation $\eta \to \infty$. }
%We do not rely on any result of \cite{NobYam2020} and hence we provide an alternative proof of \cite{NobYam2020} as a byproduct of the obtained results of the paper. 
%We in particular show the convergence of the optimal barriers as well as the value function. }
%We thus  provide a simple alternative proof of the optimality of a barrier strategy of \cite{NobYam2020}.}

%
%{We denote by $R^{b, \eta}$ the control process and by $U^{b, \eta}$ the resulting process when using a periodic barrier strategy at the barrier $b$ in the case where the observation time is defined using a Poisson process with intensity $\eta$. } \blue{[I guess we do not need this sentence because, later we say``Solely in this section''?]}
Recall, in the classical case, strategy $R^\pi$ is any adapted (with respect to the natural filtration of $X$) and nondecreasing process, which does not have to be of the form \eqref{A001}. The classical barrier strategy with barrier $b \in \R$ is given by $R_t^{b,\infty} = (b-\underline{X}_t)^+$ where $\underline{X}$ is the running infimum process of $X$ and the corresponding controlled process is $U_t^{b,\infty} = X_t + R_t^{b,\infty}$, {$t \geq 0$}. As obtained in  \cite{NobYam2020}, {the barrier strategy} $\{R_t^{b^*_\infty,\infty}: t \geq 0\}$ with barrier
\begin{align*}
b^\ast_\infty := \inf\{b\in\bR:\rho_\infty (b) +C \geq 0\} \quad \textrm{for} \quad \rho_\infty(b) := \bE_b \left[\int_0^\infty e^{-qt} f^\prime_+ (U^{b,\infty}_t) \diff t \right], \quad b \in \R,
\end{align*}
{is optimal; the value function becomes}
%the optimal strategy is
%The optimal value function in the classical case is 
$v_\infty^* (x):=\E_x \left[ \int_0^\infty e^{-qt}f(U_t^{b^\ast_\infty,\infty})\diff t + C\int_{0}^\infty e^{-qt} \diff R_t^{b^\ast_\infty,\infty}\right]$ for $x \in \R$.  
%\blue{Note that, in \cite{NobYam2020}, a slightly different barrier $\tilde{b}^\ast_\infty:=\inf\{ b \in \bR:\rho_\infty (b) +C \geq 0\}$ is used, but both selections achieve the same optimal value function. Indeed, if $\tilde{b}^\ast_\infty \neq b^\ast_\infty$ (which necessarily mean $\tilde{b}^\ast_\infty < b^\ast_\infty$), then by \cite[Lemma 6]{NobYam2020}, $b \mapsto v^\infty_b(x):=\E_x \left[ \int_0^\infty e^{-qt}f(U_t^{b,\infty})\diff t + C\int_{0}^\infty e^{-qt} \diff R_t^{b,\infty}\right]$ is continuous and its slope is zero on $(\tilde{b}^\ast_\infty, b^\ast_\infty) \backslash \{x\}$. Therefore $v^\infty_{\tilde{b}^\ast_\infty}(x)=v^\infty_{b^\ast_\infty}(x) = v_\infty^* (x)$. Note in view of \cite[Lemma 5]{NobYam2020} that $b^\ast_\infty$ most likely coincide with $\tilde{b}^\ast_\infty$.
%}
%\blue{[I changed as suggested, but we may be able to stick to our original $b^*_\infty$; see below.]}

%{In fact, in \cite[Theorem 2]{NobYam2020}, we prove that the value $ \tilde{b}^\ast_\infty:=\inf\{ b \in \bR:\rho_\infty (b) +C \geq 0\}$ is an optimal barrier. 
%However, assume that $\tilde{b}^\ast_\infty<b^\ast_\infty$. 
%Then, for $b\in (\tilde{b}^\ast_\infty, b^\ast_\infty)$, the right derivative of the function $v^\infty_b(x):=\E_x \left[ \int_0^\infty e^{-qt}f(U_t^{b,\infty})\diff t + C\int_{0}^\infty e^{-qt} \diff R_t^{b,\infty}\right]$ is equal to $0$ for all $x\in\bR$ by \cite[Lemma 6]{NobYam2020}, and thus $v^\infty_{\tilde{b}^\ast_\infty}(x)=v^\infty_{b^\ast_\infty}(x)$ by the same argument as the proof of \cite[Theorem 1]{NobYam2020}. 
%}

Solely in this section, to spell out the dependence {on the rate $\eta$}, we add super/subscript $\eta$ in an obvious way and add $\infty$ for the classical case.

%\blue{[need to explain more about classical case: TODO]} 
The objective is to show the convergence $b^\ast_{\eta} \to b^*_\infty$ and $v^{*}_{\eta} \to v^{*}_\infty$ where $b^\ast_{\eta}$ is as defined in \eqref{33}. The results hold except for a very particular case when $X$ is the negative of a subordinator (where the reflected process becomes a constant).

\begin{theorem} \label{theorem_conv}
%\blue{[I made this. Could you check?]}
We have (i) $b^\ast_{\eta} \searrow b^*_\infty$ as $\eta \to \infty$ and (ii) $v^{*}_{\eta} \searrow v^{*}_\infty$   as $\eta \to \infty$ {uniformly in $x$ on any compact set,} {where we assume $f'$ is strictly increasing at $b^*_\infty$ for the case $X$ is the negative of a subordinator.}
\end{theorem}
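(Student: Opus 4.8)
\textbf{Proof proposal for Theorem \ref{theorem_conv}.}

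The plan is to establish (i) first and then deduce (ii) from it together with a monotonicity-in-$\eta$ argument. For (i), the natural idea is to show that $\rho_\eta(b) \to \rho_\infty(b)$ for each fixed $b$ as $\eta \to \infty$, and that $\eta \mapsto \rho_\eta(b)$ is monotone (nondecreasing, so that $b^\ast_\eta$ is nonincreasing in $\eta$). The key observation is the pathwise comparison built into Remark \ref{remark_simple_expression_R}: $R_t^{b,\eta} = \max_{1\le k\le N_t^\eta}(b-X_{T(k)})^+$ increases to $R_t^{b,\infty} = (b-\underline{X}_t)^+$ as the Poisson clock speeds up, because the sampled minima $\min_{k\le N_t^\eta} X_{T(k)}$ decrease to $\underline X_t$ almost surely (the sampling grid becomes dense). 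Hence $U_t^{b,\eta} \uparrow U_t^{b,\infty}$ pathwise and monotonically in $\eta$ (for those $\eta$ along which the Poisson grids are nested — in general one argues via coupling or directly in distribution). Since $f'_+$ is nondecreasing, $f'_+(U_t^{b,\eta})$ is monotone in $\eta$, and combining this with the polynomial-growth integrability bounds from Corollary \ref{Lem303} and Lemma \ref{Lem301}, dominated/monotone convergence gives $\rho_\eta(b) \to \rho_\infty(b)$. The monotonicity of $b \mapsto \rho_\eta(b) + C$ (Lemma \ref{Lem401}) plus the limit identity then forces $b^\ast_\eta \searrow b^\ast_\infty$, provided $\rho_\infty$ does not have a flat stretch at the level $-C$ around $b^\ast_\infty$; the exceptional hypothesis ``$f'$ strictly increasing at $b^\ast_\infty$ when $X = -$(subordinator)'' is exactly what rules out the degenerate flat case (where $U^{b,\infty}$ is constant equal to $b$, so $\rho_\infty(b) = f'_+(b)/q$).

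For (ii), I would first show $v^\ast_\eta(x) = v_{b^\ast_\eta}^\eta(x)$ converges by writing the value function through its representation and controlling two sources of $\eta$-dependence: the change in the barrier $b^\ast_\eta \to b^\ast_\infty$ and the change in the law of the controlled process for a \emph{fixed} barrier. For a fixed barrier $b$, one has $U_t^{b,\eta} \uparrow U_t^{b,\infty}$ and $R_t^{b,\eta} \uparrow R_t^{b,\infty}$ pathwise (monotone in $\eta$), so $v_b^\eta(x) \to v_b^\infty(x)$ by monotone convergence applied separately to the running-cost part (splitting $f = f^+ - f^-$ and using the polynomial growth bound of Remark \ref{Rem201}/Lemma \ref{Lem301}(iii) for domination) and to the controlling-cost part (here monotone convergence applies directly since $R^{b,\eta}$ is increasing in $\eta$; the sign of $C$ is handled by noting both $v_b^\eta$ and the integral $\int e^{-qt}\diff R_t^{b,\eta}$ are finite). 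Then I would combine this with (i): since $v^\ast_\eta = v_{b^\ast_\eta}^\eta$, and since $v_b^\eta(x)$ is jointly continuous/monotone enough in $(b,\eta)$ near $(b^\ast_\infty,\infty)$ — using that $b \mapsto v_b^\eta(x)$ has a minimum at $b^\ast_\eta$ with $v_b^{\eta\prime}$ controlled by Proposition \ref{Lem404} — a sandwiching argument gives $v^\ast_\eta(x) \to v^\ast_\infty(x)$. The monotone decrease $v^\ast_\eta \searrow v^\ast_\infty$ follows because a larger $\eta$ means a richer set of control opportunities (the admissible class for rate $\eta$ embeds, in the limiting sense, into that for larger $\eta'$; more carefully, one shows $\eta \mapsto v^\ast_\eta(x)$ is nonincreasing by the pathwise monotonicity of the optimally-controlled process, or by noting $v^\ast_\eta \ge v^\ast_{\eta'} \ge v^\ast_\infty$ from the value-function comparison), and uniformity on compacts follows from the equi-polynomial-growth and equicontinuity of the family $\{v^\ast_\eta\}$ afforded by the uniform bounds in Lemma \ref{Lem301}(iii) and the derivative formula \eqref{20} (a Dini-type argument: monotone pointwise convergence of continuous functions to a continuous limit on a compact set is uniform).

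The main obstacle I anticipate is the rigorous justification of the pathwise monotone convergence $U_t^{b,\eta} \uparrow U_t^{b,\infty}$ \emph{as a genuinely monotone family in the single parameter} $\eta$, rather than merely along a nested subsequence of Poisson grids. A clean way around this is to avoid claiming monotonicity in $\eta$ pathwise and instead: (a) prove the pointwise limit $\rho_\eta(b) \to \rho_\infty(b)$ and $v_b^\eta(x) \to v_b^\infty(x)$ via a coupling where $\mathcal T_\eta$ is the restriction of a common Poisson random measure (so nestedness holds along $\eta = 2^n$, say, and general $\eta$ is squeezed); and (b) obtain the monotonicity statements $b^\ast_\eta \searrow$ and $v^\ast_\eta \searrow$ separately from the comparison $v^\ast_\eta(x) \ge v^\ast_{\eta'}(x)$ for $\eta \le \eta'$, which itself follows because any rate-$\eta$ strategy is (in distribution) implementable at rate $\eta'$ by simply not acting at the extra opportunities — this is the conceptually cleanest route and sidesteps the delicate pathwise bookkeeping. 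The second genuine subtlety is the handling of the degenerate subordinator case in (i)–(ii), where one must check that the extra assumption on $f'$ genuinely restores strict monotonicity of $\rho_\infty$ at $b^\ast_\infty$ and hence uniqueness of the crossing level; this is a short but essential verification.
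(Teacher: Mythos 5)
Your overall strategy coincides with the paper's: couple the observation grids by superposing independent Poisson processes so that the sets $\mathcal{T}_{\eta_n}$ are nested, deduce the pathwise monotone convergence $R^{b,\eta_n}_t \nearrow R^{b,\infty}_t$ and $U^{b,\eta_n}_t \nearrow U^{b,\infty}_t$ (for a.e.\ $t$), pass to the limit in $\rho_{\eta}$ by monotone convergence, identify the limiting barrier through the crossing of $-C$, and obtain (ii) by dominated convergence together with monotonicity in $\eta$ and Dini's theorem; the work-arounds you anticipate (a common Poisson construction for nestedness, strategy embedding for the monotonicity of the value in $\eta$) are exactly the devices used in the paper.

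There is, however, one genuine gap in your argument for (i): you claim that the only way $\rho_\infty$ can be flat at the level $-C$ to the right of $b^*_\infty$ is the negative-subordinator case, so that the extra hypothesis on $f'$ disposes of all degeneracies. This is not automatic. Since $f'_+$ may be constant (possibly equal to $-Cq$) on a nondegenerate interval, the nondecreasing map $b\mapsto\rho_\infty(b)$ could a priori equal $-C$ on an interval to the right of $b^*_\infty$ even when $X$ is not the negative of a subordinator; ruling this out is precisely the content of Lemma 5 of \cite{NobYam2020} ($\rho_\infty(b)>-C$ for all $b>b^*_\infty$ in the non-subordinator case), which the paper cites and which requires a separate argument about the reflected process — without this input, your conclusion $b^*_\eta\searrow b^*_\infty$ does not follow. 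Two smaller points: the exact pointwise limit $\rho_\eta(b)\to\rho_\infty(b)$ you assert can fail at discontinuity points of $\rho_\infty$, because $U^{b,\eta}_t$ approaches $U^{b,\infty}_t$ from below and $f'_+$ is only right-continuous; the paper therefore works with the one-sided sandwich $\rho_\infty(b-)\le\lim_\eta\rho_\eta(b)\le\rho_\infty(b)$, which suffices, and your write-up should do the same. For (ii), your sandwich $v^*_\infty\le v^*_\eta\le v^\eta_{b^*_\infty}\to v^*_\infty$ is a legitimate alternative to the paper's direct domination $U^{b^*_\infty,\eta_N}_t\le U^{b^*_{\eta_n},\eta_n}_t\le U^{b^*_{\eta_N},\infty}_t$ combined with convexity and integration by parts, but it needs the fixed-barrier convergence and the comparison $v^*_\infty\le v^*_\eta$ (a filtration/admissibility point) to be spelled out.
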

\begin{proof}
Let $\{\eta_n: {n \in \{0\} \cup \mathbb{N}} \}$ be a strictly increasing {(deterministic)} sequence such that $\eta_0 = 0$ and $\eta_n \xrightarrow{n \uparrow \infty} \infty$. Consider, {for each $n$}, a Poisson process $M^{n}$ with rate $\lambda_n := \eta_n - \eta_{n-1} > 0$ independent of $X$, and let
\begin{align}
N^{\eta_n}_t = \sum_{k=1}^n M^{k}_t,\qquad t\geq 0.
\end{align}
%\blue{[change $N^{{\eta_n}}$ to$N^{\eta_n}$?]} 
{We assume $\{M^n: n \geq 1\}$ are mutually independent and also independent of $X$. Hence, {their superposition} $N^{\eta_n}$ becomes}
a Poisson process with rate $\eta_n$ {independent of $X$}. We consider the problems driven by these processes (defined on the same probability space) to show the convergence.

(i) Fix $u \geq 0$. Let $\bar{\sigma}_n(u) := \inf \{ s > u: \Delta N^{\eta_n}_s \neq 0 \}$ and $\underline{\sigma}_n(u) := \sup \{ s < u: \Delta N^{\eta_n}_s \neq 0 \}$  be, respectively, the first {arrival} time after $u$ and the last {arrival} time  before $u$ of $N^{\eta_n}$ (with the understanding $\sup \varnothing = 0$). Then, %because $N^{\eta_n}$ and $X$ are independent, \blue{[I guess it has nothing to do with the independence with $X$? Delete ``because ...''?]}
\[
\p (\bar{\sigma}_n(u) - u > \varepsilon) = \p (N^{\eta_n}_{u + \varepsilon} - N^{\eta_n}_u = 0) = e^{-\varepsilon \eta_n} \xrightarrow{n \uparrow \infty} 0, \quad \varepsilon > 0.
\]
In other words $\bar{\sigma}_n(u) \xrightarrow{n \uparrow \infty} u$ in probability. Because it is decreasing, the convergence also holds in the a.s.-sense. Similarly, we also have $\underline{\sigma}_n(u) \nearrow u$ a.s.\ {as $n \to \infty$.}

%\blue{[People use $G$ instead of $L$?]} 
Fix $t > 0$ and $G(t):=\sup\{s\in[0, t]:X_{s-}\wedge X_s= {\underline{X}_t}  \}$ {(with $X_{0-}=X_0$)}. Suppose $G(t) {\in (0, t)}$.  If $X_{G(t)-} \geq X_{G(t)}$ (i.e.\ $X$ is continuous or {jumps downward} at $G(t)$), because $X$ is right-continuous a.s.,
\[
X_{\bar{\sigma}^n (G(t))} \xrightarrow{n \uparrow \infty} X_{G(t)} = X_{G(t)} \wedge X_{G(t)-} = \underline{X}_t.
\]
If $X_{G(t)} > X_{G(t)-}$ (i.e.\ $X$ {jumps upward} at $G(t)$), then
\[
X_{\underline{\sigma}^n (G(t))} \xrightarrow{n \uparrow \infty} X_{G(t)-} = X_{G(t)} \wedge X_{G(t)-} =  \underline{X}_t.
\]
%Alternatively, we can write
%\[
%R_t^b = \max_{1 \leq k \leq N_t} (b-X_{T(k)}) \vee 0, \quad t \geq 0,
%\]
{These together with Remark \ref{remark_simple_expression_R} give,} for any $b \in \R$, %\blue{[below changed $\vee 0$ to $^+$. Please check.]}
\begin{align*}
R^{b, \eta_n}_t  { = \max_{1 \leq k \leq N_t^{\eta_n}} (b-X_{T(k)})^+}
%= \sup_{s: \Delta N_s^n \neq 0}1_{\{s \leq t \}} \left( b - X_t\right) \vee 0 
\geq (b-X_{\underline{\sigma}^n (G(t))})^+ \vee (b-X_{\bar{\sigma}^n (G(t))})^+ \xrightarrow{n \uparrow \infty} (b - \underline{X}_t)^+ {= R_t^{b,\infty}}.
\end{align*}
%{[I guess 
%\begin{align*}
%R^{b, \eta_n}_t  { = \max_{1 \leq k \leq N_t^{\eta_n}} (b-X_{T(k)})^+}
%%= \sup_{s: \Delta N_s^n \neq 0}1_{\{s \leq t \}} \left( b - X_t\right) \vee 0 
%\geq (b-X_{\bar{\sigma}^n (G(t))})^+ 
%\end{align*}
%may not true. 
%]}
For the case $G(t) = 0$ {(i.e., $\underline{X}_t = X_0$)}, by slightly modifying the arguments, $R^{b, \eta_n}_t  
%= \sup_{s: \Delta N_s^n \neq 0}1_{\{s \leq t \}} \left( b - X_t\right) \vee 0 
\geq  (b-X_{\bar{\sigma}^n (0)})^+ \xrightarrow{n \uparrow \infty} (b - X_0)^+ {= R_t^{b,\infty}}$.

{If $G(t) = t$, we have $X_{\underline{\sigma}^n (t)} \xrightarrow{n \uparrow \infty} X_{t-}$ and hence
$R^{b, \eta_n}_t  \xrightarrow{n \uparrow \infty} (b - \underline{X}_{t-})^+$, 
which differs from $R_t^{b,\infty}$ only when $X$ jumps downward at $t$.
}

{By these and because $n \mapsto R^{b,\eta_n}_t$ is increasing, we have}
 %and bounded from above by $R^{b,\infty}_t$. Hence, 
 $R^{b,\eta_n}_t \nearrow  R^{b,\infty}_t$ and consequently $U^{b,\eta_n}_t \nearrow U^{b,\infty}_t$ as $n \to \infty$ for {a.e.} $t > 0$ {(more specifically all $t > 0$ except $t$ at which $X$ jumps downward)}
 %$R^{b, \infty}$ does not have a jump}, 
for all $ b \in \R$. 
% {[I guess $R^{b,\eta_n}_t \nearrow  R^{b,\infty}_t$ may not be true when $R^{b,\infty}$ has jump at time $t$.]
%% For any $\varepsilon>0$, we have 
%%\begin{align*}
%%R^{b, \eta_n}_{t+\varepsilon}  { = \max_{1 \leq k \leq N_{t+\varepsilon}^{\eta_n}} (b-X_{T(k)})^+}
%%\geq \lim_{n\uparrow\infty }(b-X_{\underline{\sigma}^n (G(t))})^+ \vee (b-X_{\bar{\sigma}^n (G(t))})^+ = (b - \underline{X}_t)^+ {= R_t^{b,\infty}}.
%%\end{align*}
% }

%\blue{[Actually, $U_t^{b, \eta_n}$ may coincide with $U_t^{b, \infty}$ and so it can happen that  $f'_+ (U_t^{b, \eta_n}) \xrightarrow{n \uparrow \infty} f'_- (U_t^{b, \infty})$ or  $f'_+ (U_t^{b, \eta_n}) \xrightarrow{n \uparrow \infty} f'_+ (U_t^{b, \infty})$. In any case, 
%$f'_- (U_t^{b, \infty}) \leq \lim_{n \to \infty}f'_+ (U_t^{b, \eta_n})  \leq f'_+ (U_t^{b, \infty})$ and so $\rho_\infty(b-) \leq \lim_{n \to \infty} \rho_{\eta_n} (b)  \leq \rho_\infty(b)$. So I changed as follows]}

By this, together with the fact that {$f_+'$} is 
%continuous a.e.\ {[The continuity of $f^\prime_+$ is not necesarry?]} and
 nondecreasing, we have $f'_- (U_t^{b, \infty}) \leq \lim_{n \to \infty}f'_+ (U_t^{b, \eta_n})  \leq f'_+ (U_t^{b, \infty})$ for a.e.\ $t > 0$ and hence, by the monotone convergence theorem,
%{we have 
%\begin{align}
%\lim_{n\uparrow\infty}\rho_{\eta_n}(b)\leq \rho_{\infty}(b) \leq 
%\lim_{n\uparrow\infty}\bE_b \left[\int_0^\infty e^{-qt} f^\prime_+ (X_t+R^{b, \eta_n}_{t+\varepsilon}  ) \diff \right]
%\end{align}
%}
$n \mapsto \rho_{\eta_n}(b)$ is nondecreasing and  $\rho_{\infty}(b-) \leq \lim_{n \to \infty} \rho_{\eta_n}(b) \leq \rho_{\infty}(b)$ for all $b \in \R$.

This shows that $b^\dagger := \lim_{n \to \infty} b^*_{\eta_n}$ exists and $b^\dagger \geq b^*_\infty$. The monotonicy  also suggests that $\rho_{\eta_n} (b^\dagger) \leq {-C}$ uniformly in $n$ and hence $\rho_{\infty} (b^\dagger-) \leq {-C}$. {If $b^\dagger > b^*_\infty$, then we must have $\rho_{\infty} ((b^*_\infty + b^\dagger)/2) \leq {-C}$.} {However, as}  shown in  \cite[Lemma 5]{NobYam2020},
%{[Should we change the definition of $b^\ast_\infty$ as above?]},
 $\rho_\infty (b)>-C$ for $b>b^\ast_\infty$ {for the case $X$ is not the negative of a subordinator. For the case it is the negative of a subordinator, we have $\rho_\infty(b) = f'_+(b)/q$, which is strictly increasing at $b = b^*_\infty$ by assumption  and hence the contradiction can be derived similarly.}
 Hence we must have $b^\dagger = b^*_\infty$, as desired. %\blue{[I don't know but where do we use the new version of $b^*$? I feel we can just stick to our original $b^*$.]}
(ii) 
Fix $N \in \N$ and $x\in\bN$. Because, for $n \geq N$,  $b^*_\infty \leq  b^*_{\eta_n} \leq b^*_{\eta_N}$ and hence $U_t^{b^*_\infty,\eta_N} \leq U_t^{b^*_{\eta_n},\eta_n} \leq U_t^{b^*_{\eta_N}, \infty}$ and by the convexity of $f$,
\begin{align}
\bE_x \left[\int_0^\infty e^{-qt} \sup_{n \geq N}|f (U_t^{b^*_{\eta_n},\eta_n})| \diff  t\right] 
\leq  \bE_x \left[\int_0^\infty e^{-qt} (| f (U_t^{b^*_\infty,\eta_N}) | +  | f (U_t^{b^*_{\eta_N}, \infty}) |+c)\diff  t\right] < \infty,
\end{align}
where $c$ is a constant value defined in \eqref{35}. 
{On the other hand, $|U^{b^{\ast}_{\eta_n},\eta_n}_t - U^{b^{\ast}_{\infty},\infty}_t | \leq |U^{b^{\ast}_{\eta_n},\eta_n}_t - U^{b^{\ast}_{\infty},\eta_n}_t| + |U^{b^{\ast}_{\infty},\eta_n}_t  - U^{b^{\ast}_{\infty},\infty}_t| \xrightarrow{n \uparrow \infty} 0$ by Remark  \ref{remark_simple_expression_R} and (i) for a.e.\ $t > 0$.}
Hence dominated convergence gives the pointwise convergence of $\E_x \big[ \int_0^\infty e^{-qt}f(U_t^{b^\ast_{\eta_n},\eta_n})\diff t \big]$ to $\E_x \big[ \int_0^\infty e^{-qt}f(U_t^{b^\ast_\infty,\infty})\diff t \big]$ for all $x \in \R$.

{On the other hand, by integration by parts,}
\begin{align}
&\left| \bE_x \left[
%\int_{[0, \infty)} 
{\int_0^\infty}
e^{-qt}\diff R_t^{b^*_{\eta_n},\eta_n} \right] -  \bE_x \left[\int_{[0, \infty)} e^{-qt}\diff R_t^{b^*_{\infty},\infty} \right] \right| \\
&
\qquad\qquad\qquad\qquad
=q\left| \bE_x \left[
%\int_{[0, \infty)} 
{\int_0^\infty} e^{-qt} R_t^{b^*_{\eta_n},\eta_n} \diff t\right] -  \bE_x \left[
%\int_{[0, \infty)} 
{\int_0^\infty} e^{-qt} R_t^{b^*_{\infty},\infty} \diff t\right] \right|\\
&\qquad\qquad\qquad\qquad
\leq q  \bE_x \left[
%\int_{[0, \infty)} 
{\int_0^\infty}e^{-qt} \left| R_t^{b^*_{\eta_n},\eta_n}- R_t^{b^*_{\infty},\infty}\right| \diff t\right]. 
%\leq  2\bE_x \left[\int_{[0, \infty)} e^{-qt} R_t^{b^*_{\eta_N},\infty} \diff t\right]
%<\infty. 
\end{align}
{Here, $ \bE_x \big[\int_0^\infty e^{-qt} \sup_{n \geq N}| R_t^{b^*_{\eta_n},\eta_n}- R_t^{b^*_{\infty},\infty} | \diff t\big] 
\leq  2\bE_x \big[\int_0^\infty e^{-qt} R_t^{b^*_{\eta_N},\infty} \diff t \big]
<\infty$ thanks to $ R_t^{b^*_{\eta_n},\eta_n}\lor R_t^{b^*_{\infty},\infty} \leq R_t^{b^*_{\eta_N}, \infty}$ for all $t > 0$.}
% and thus, 
%\blue{[I guess we need to be careful at time zero especially when $x$ is below $b$?]}
Thus, by the dominated convergence theorem together with $|R^{b^{\ast}_{\eta_n},\eta_n}_t - R^{b^{\ast}_{\infty},\infty}_t |=|U^{b^{\ast}_{\eta_n},\eta_n}_t - U^{b^{\ast}_{\infty},\infty}_t | \xrightarrow{n \uparrow \infty} 0$ for a.e. $t > 0$, we have the pointwise convergence of $\bE_x \big[\int_0^\infty e^{-qt}\diff R_t^{b^*_{\eta_n},\eta_n} \big]$ to $\bE_x \big[\int_{[0, \infty)} e^{-qt}\diff R_t^{b^*_{\infty},\infty} \big]$ for all $x \in \R$.

Finally, because $n \mapsto v^{\ast}_{\eta_n}(x)$ is monotone and each value function is continuous in $x$, $\lim_{n \uparrow \infty}v^{\ast}_{\eta_n}(x) = v^{\ast}_\infty(x)$ holds uniformly in $x$ on any compact set by Dini's theorem.

 \end{proof}
%Hence
%\[
%\tilde{v}(x)-C = \E_x [\int_0^{\tau_b^-} e^{-qt} f(X_t) \diff t] + \E [e^{-q \tau_b} (b - X_{\tau_b^-} + (\tilde{v}(b)-C))]
%\]
%Solving for $C$,
%\[
%\tilde{v}(x) - \E_x [\int_0^{\tau_b^-} e^{-qt} f(X_t) \diff t] - \E_x [e^{-q \tau_b} (b - X_{\tau_b^-} + \tilde{v}(b))] = C (1 - \E_x [e^{-q \tau_b} ])
%\]
%Hence,
%\[
%C = \lim_{x \rightarrow \infty} (\tilde{v}(x) - \E_x [\int_0^{\tau_b^-} e^{-qt} f(X_t) \diff t] ) 
%\]

\section{Numerical results} \label{section_numerics}

%\blue{[added]}
In this section, we confirm the obtained results through numerical experiments via Monte Carlo simulation (classical Euler scheme). 
%While there exist many simulation techniques for \lev processes, it is not our focus to evaluate and compare the performance of these techniques.  Here, in order to confirm that our results are easily implementable, we use a classical Euler scheme focusing on the case $X$ is the sum of drifted Brownian motion and a compound Poisson process with two-sided jumps (i.e.\ $\Pi$ is a finite measure). 
%At least in theory, any \lev process can be approximated using those with  finite \lev measures.  
%An obvious shortcoming of the Euler scheme is that it fails to be accurate  when $f'$ is not continuous (for the computation of $b^*$), and hence here we focus on the cases $f'$ is continuous. This issue may potentially be resolved by using other methods, such as the Wiener-Hopf simulation \cite{KKPV}	 which is implementable if the Wiener-Hopf factorization is explicitly known. 
%As an example, we consider the following \lev process of the form:TODO.
In order to confirm that the results hold for a wide class of \lev processes, we choose a \lev process
%.  We consider 
$X$ of the form
\begin{equation}
 X_t = X_0 -0.1 t+ 0.2 B_t + \sum_{n=1}^{N_t^+} Z_n^{+} - \sum_{n=1}^{N_t^-} Z_n^{-}, \quad 0\le t <\infty, \label{X_phase_type}
\end{equation}
where $\{B_t: t\ge 0\}$ is a standard Brownian motion and $\{N_t^+: t\ge 0 \}$ and $\{N_t^-: t\ge 0 \}$ are Poisson processes with arrival rates $0.4$ and $0.6$, respectively. The upward and downward jumps  $\{ Z_n^+: n \in \mathbb{N} \}$ and $\{ Z_n^-: n \in \mathbb{N} \}$ are i.i.d.\ sequences of (folded) normal random variables with mean zero and variance $1$ and Weibull random variables with shape parameter $2$ and scale parameter $1$, respectively. These processes are assumed mutually independent.

For the running cost function $f$, we consider the following three cases: 
\begin{align} \label{examples_f}
\begin{aligned}
&\ \ f_1(x) := x^2, \; f_2(x) :=x^3 1_{\{x \geq 0\}} +  x^2 1_{\{x < 0\}}, \\
& f_3(x) := [ x^2 + e^{-(x-1)} ] 1_{\{x \geq 1\}} + \frac {x^2+3} 2 1_{\{x < 1\}},
 \end{aligned}
\end{align}
for $x \in \mathbb{R}$, which are convex and continuously differentiable on $\mathbb{R}$.   For other parameters, we set $q=0.05$ and $C = 1$. For each realization, we truncate the time horizon to $T = 100$ and discretize $[0,T]$ using $N = 10,000$ equally-spaced points with distance $\Delta_t := T/N$. Unless stated otherwise, we use $\eta = 1$.
%Here, we consider $f_2$ and $f_3$ as examples where they fail to be $C^2$ at $0$ and $1$, respectively (i.e.\ the condition in Lemma \ref{Lem205a} does not hold), and hence the $C^2$ property of $v_{b^*}$ as in Assumption \ref{assump_C_line} needs to be verified numerically.

%As shown in the previous section, obtaining the optimal solution reduces to computing the barrier  $b^*$ as in \eqref{def_b_star}. Note that  \eqref{def_b_star} is equivalently written as:
%\begin{align}
%b^\ast {:=}\inf\left\{b\in\bR: \blue{\rho(b) := }\bE_0 \left[\int_0^\infty e^{-qt} f^\prime_+(U^{0}_t+b)\diff t\right] +C \geq 0\right\}.  
%\end{align}
For the approximation of the expectation, we first obtain a set of $M := 5,000$ sample paths of $X$ started at zero, say $\hat{\mathbf{X}} := (\hat{X}^{(1)}, \ldots, \hat{X}^{(M)})$ with $\hat{X}^{(m)} = \{\hat{X}^{(m)}_{n \Delta_t}: 1 \leq n \leq N \}$ for $1 \leq m \leq M$. Control opportunities $\hat{\mathbf{N}}^\eta := (\hat{N}^{\eta,(1)}, \ldots, \hat{N}^{\eta,(M)})$ with $\hat{N}^{\eta,(m)} = \{\hat{N}^{\eta,(m)}_{n \Delta_t  }: 1 \leq n \leq N \}$ for $1 \leq m \leq M$ are sampled by generating  $\hat{N}^{\eta,(m)}_{(n+1) \Delta_t} - \hat{N}^{\eta,(m)}_{n \Delta_t} = 1_{\{\mathrm{e} < \Delta_t \}}$ with i.i.d.\ $\mathrm{e} \sim \exp(\eta)$ and  their corresponding reflected paths (with barrier zero) $\hat{U}^{0,(m)} = \{\hat{U}^{0,(m)}_{n \Delta_t }: 1 \leq n \leq N \}$ are then computed.
% and their corresponding reflected paths (with barrier zero) \blue{TODO} $\hat{U}^{0,(m)}_{n \Delta_t } = \hat{X}_{n \Delta_t}^{(m)} - \min_{0 \leq l \leq n} \hat{X}_{l \Delta_t}^{(m)}$, $m = 1, \ldots, M$. 
 These sample paths can be used commonly for the approximation of the expectation in $\rho(b)$ as in \eqref{def_rho}. In other words, we approximate it by $\hat{\rho}_M(b) :=M^{-1} \sum_{m=1}^M \Delta_t \sum_{n=0}^{N} e^{-q n \Delta_t } f'(\hat{U}^{0,(m)}_{n \Delta_t   }+b)$.
As shown in Section \ref{section_barrier}, $\rho(b)$ is monotone and hence $b^*$ can be obtained by classical  bisection. While $\hat{\rho}_M(b)$ for each $b$ is an approximated value, because we are using the same sample paths $(\hat{\mathbf{X}}, \hat{\mathbf{N}}^\eta)$,  the monotonicity of $b \to \hat{\rho}_M(b) $ is still preserved, causing no problem in using bisection methods.  %Below, we use $M = TODO$.
Figure \ref{plot_b_star} shows the plots of $\hat{\rho}_M(b)$ for Cases $i$ for $i = 1,2,3$. It can be confirmed that it is indeed monotonically increasing, and the root becomes $b^*$. Note for the case $i=1$, $\rho_M(b)$ becomes a  straight line. % as discussed in Example \ref{example_periodic}. \blue{TODO}

\begin{figure}[htbp]
\begin{center}
\begin{minipage}{1.0\textwidth}
\centering
\begin{tabular}{cc}
 \includegraphics[scale=0.6]{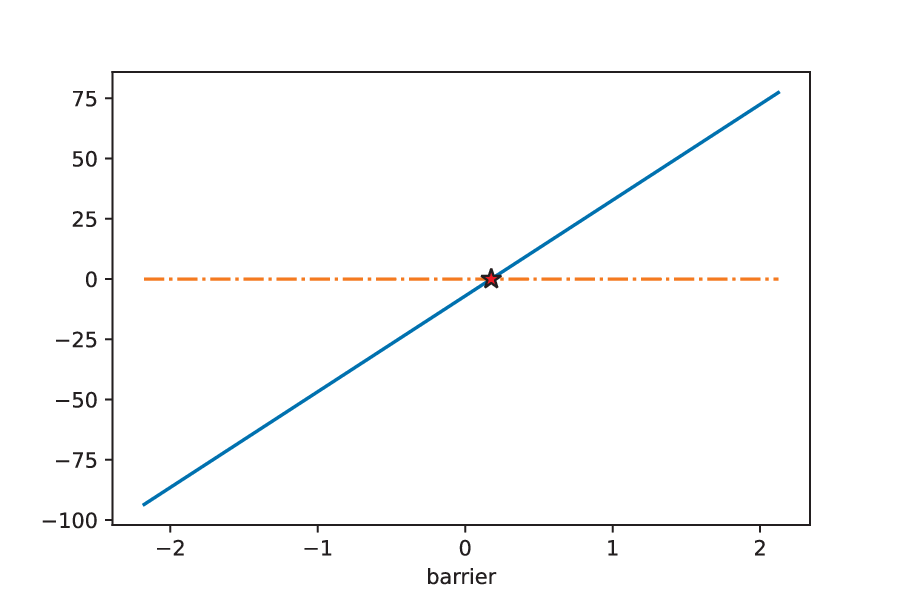} \\ Case 1 \\ \includegraphics[scale=0.6]{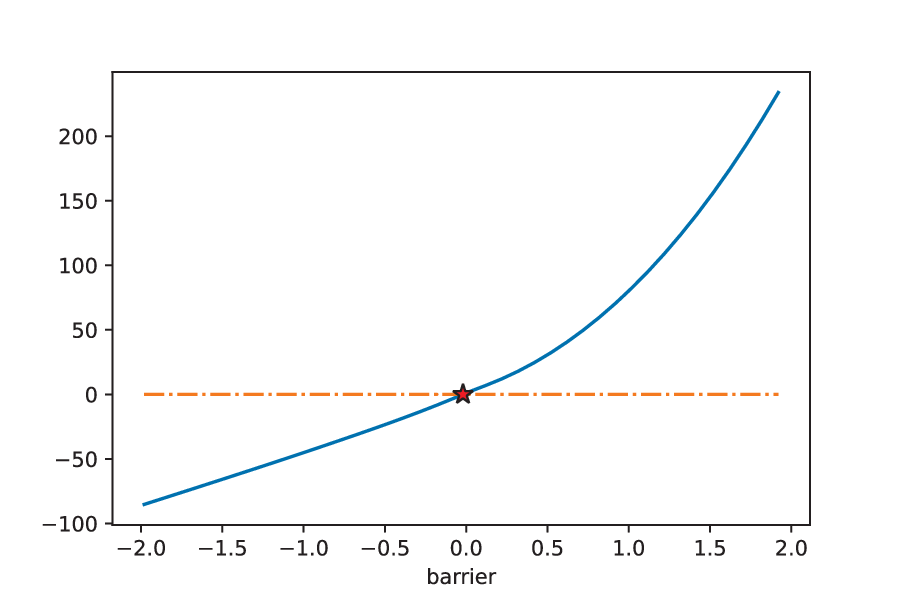}   \\
  Case 2 
 \end{tabular}
  \includegraphics[scale=0.6]{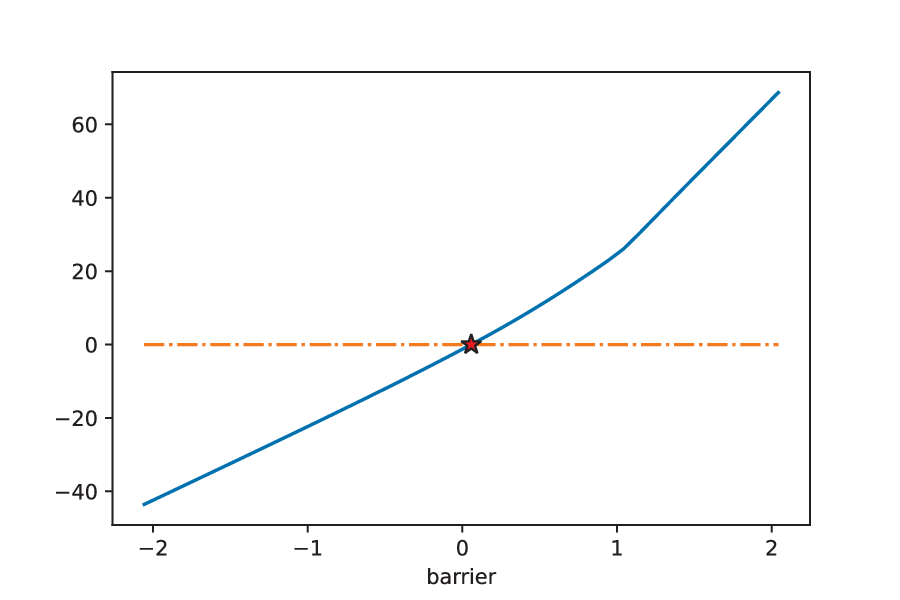} \\ Case 3
\end{minipage}
\caption{Plot of $\hat{\rho}_M(b)$ for Case $i$ under the cost function $f_i$ as in \eqref{examples_f}, for $i = 1,2,3$. The root (indicated by a star) becomes an approximation of the optimal barrier $b^*$.
} \label{plot_b_star}
\end{center}
\end{figure}

With the approximated optimal barrier $b^*$, we shall now confirm the optimality by comparing the expected total costs $v_{b^*}$ with $v_b$ under suboptimal choices of $b$. In order to compute these, we continue using the set of paths  $(\hat{\mathbf{X}}, \hat{\mathbf{N}}^\eta)$. 
%The reflected path with lower barrier $b$ under $\p_x$ becomes
%$\hat{U}^{b,(m)}_{n\Delta_t } = (\hat{X}_{n\Delta_t }^{(m)} +x ) - \min_{0 \leq l \leq n} \{ (\hat{X}_{l \Delta_t }^{(m)}+x) - b) \wedge 0 \}$, $m = 1, \ldots, M$. 
Figure \ref{figure_value_function} shows the results. It can be confirmed that the selection $b^*$ indeed minimizes the total expected cost for  all starting points. 
%approximated values of the optimal value function along with those under suboptimal selections of $b$.
 
Finally, we confirm the convergence as $\eta \to \infty$. In Figure \ref{figure_derivatives}, we plot the value function when $\eta = 2,5,10,20,50,100,200,500,1000$ together with the classical case whose reflected path with lower barrier $b$ under $\p_x$ is approximated by
$(\hat{X}_{n\Delta_t }^{(m)} +x ) +  \max_{0 \leq l \leq n} (b- (\hat{X}_{l \Delta_t }^{(m)}+x) )^+$. It is observed in all cases that the optimal barrier and the value function converge decreasingly to those of the classical case, confirming  Theorem \ref{theorem_conv}.

%In view of the assumption that $v_{b^*}$ is twice differentiable for Case 2 and 3, it seems so.

%
\begin{figure}[htbp]
\begin{center}
\begin{minipage}{1.0\textwidth}
\centering
\begin{tabular}{cc}
\includegraphics[scale=0.6]{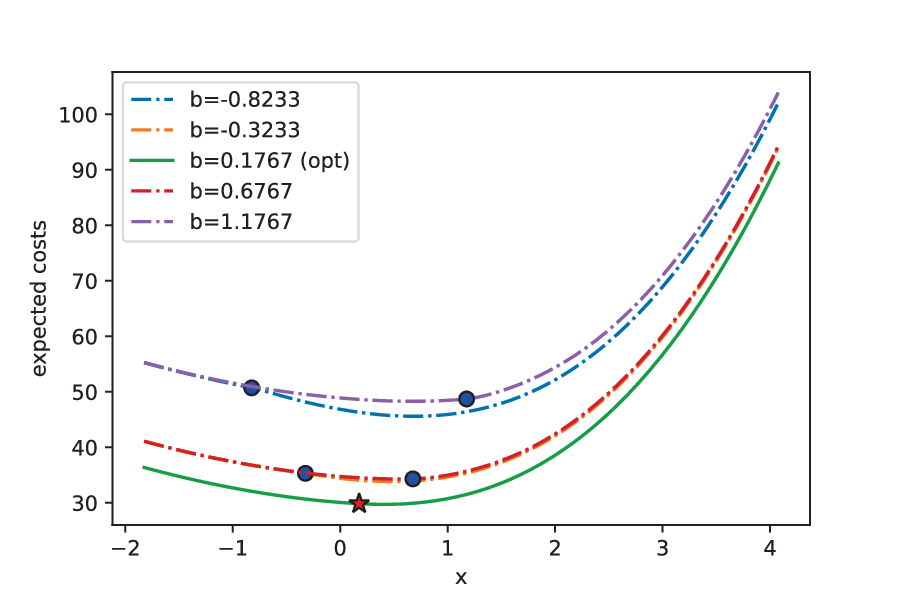} \\ Case 1
\\
 \includegraphics[scale=0.6]{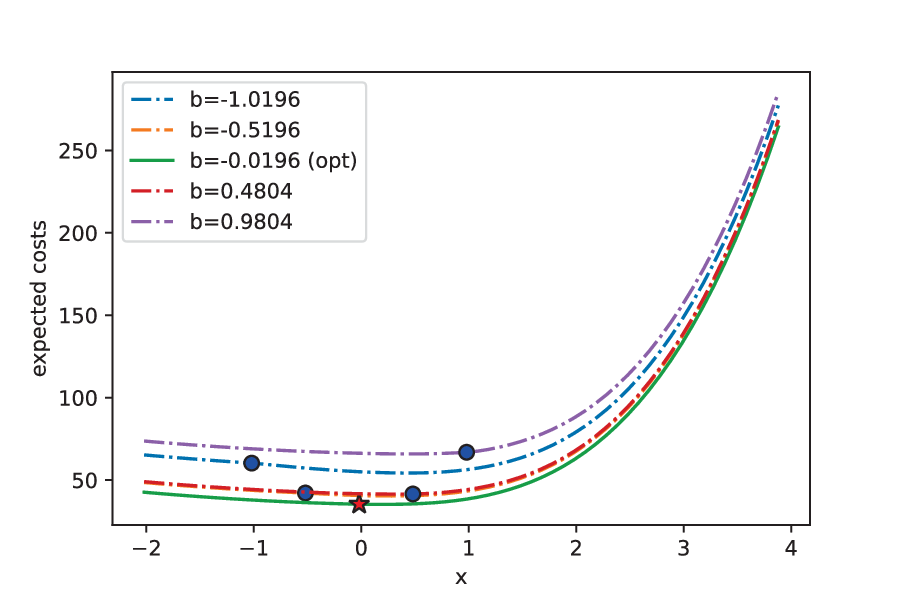}     \\
  Case 2 
 \end{tabular}
  \includegraphics[scale=0.6]{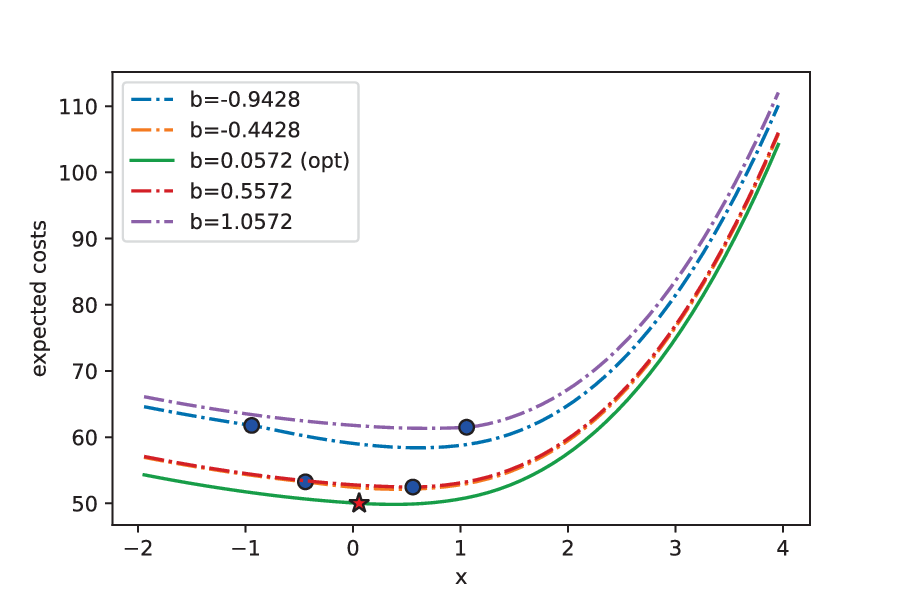} \\
  Case 3
\end{minipage}
\caption{Plot of the approximated value functions $v_{b^*}$ (solid) along with $v_b$ (dotted) for $b = b^*-1, b^*-0.5,  b^*+0.5,b^*+1.0$ for Case $i$ for $i = 1,2,3$. The points at the barriers are indicated by stars and circles for $b = b^*$ and $b \neq b^*$, respectively.
} \label{figure_value_function}
\end{center}
\end{figure}

%The reflected path with lower barrier $b$ under $\p_x$ becomes

%smoothness of the value function $v_{b^*}$.
%We compute the derivative $v_{b^*}'$ as in \eqref{40} via simulation using the same sample paths $\hat{\mathbf{X}}$. In order to evaluate the second derivative, we compute $(v_{b^*}'(x+\delta)-v_{b^*}'(x))/\delta$ where $\delta = 0.01$. These results are summarized in Figure \ref{figure_derivatives}. While Cases 2 and 3 do not satisfy the conditions in Lemma \ref{Lem205a}, the approximation of the second derivative appears to be continuous, justifying Assumption  \ref{assump_C_line}. \blue{TODO}
%
%\blue{TODO: about convergence in $\eta$}

\begin{figure}[htbp]
\begin{center}
\begin{minipage}{1.0\textwidth}
\centering
\begin{tabular}{cc}
\includegraphics[scale=0.6]{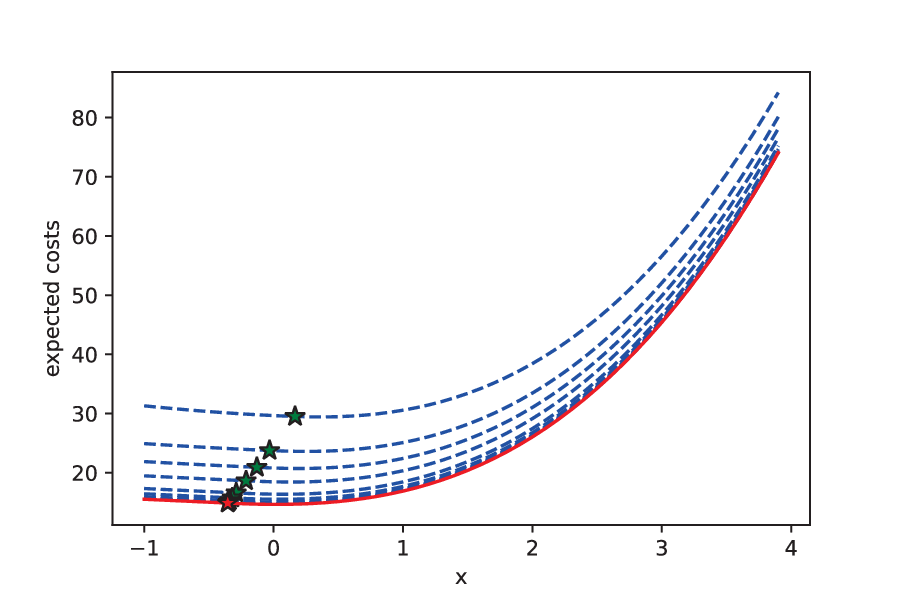} \\ Case 1\\
 \includegraphics[scale=0.6]{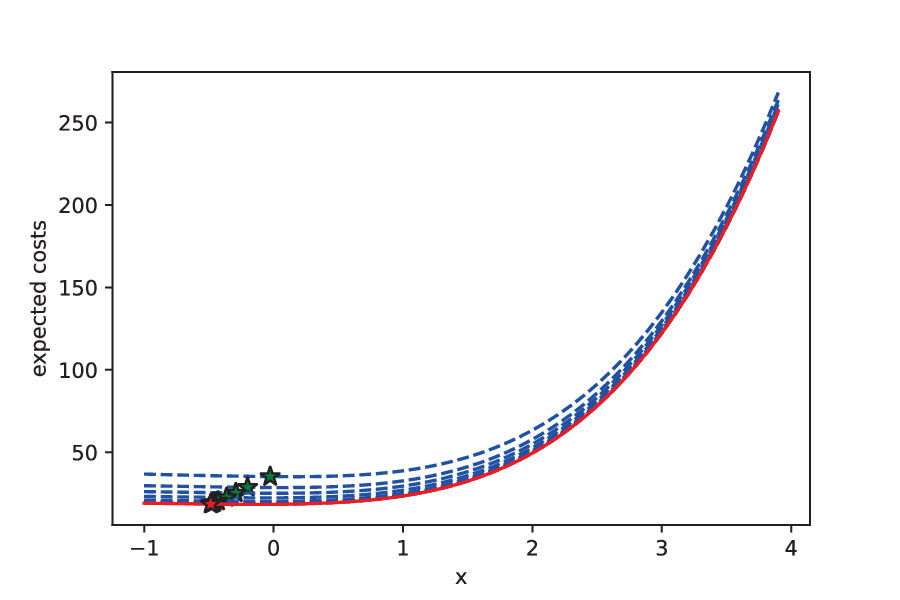}    \\
   Case 2 
 \end{tabular} \\
 \includegraphics[scale=0.6]{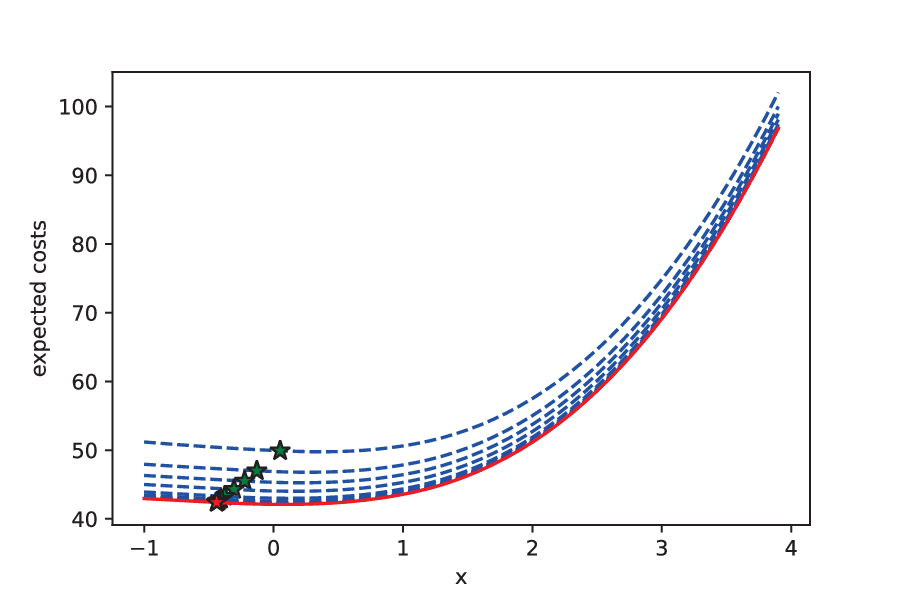} \\
 Case 3
\end{minipage}
\caption{Plot of the approximated value functions $v_{b^*}$ (dotted) for $\eta = 2,5,10,20,50,100,200,500,1000$ along with that in the classical case (solid). The points at the barriers are indicated by stars.
} \label{figure_derivatives}
\end{center}
\end{figure}

\section{Concluding remarks} \label{section_concluding_remarks}

{In this paper, we solved the stochastic control problem of minimizing the sum of running and controlling costs under the constraint that control opportunities are restricted to independent Poisson arrival times. For a general \lev process model, we showed the optimality of a simple barrier strategy, with its barrier  analytically provided as a root of the equality \eqref{opt_b_review}. Furthermore, we demonstrated that the optimal solutions in the Poissonian setting converge to those in the continuous-observation setting. These results potentially provide a new approach to the classical case, {using techniques developed}
%by way of the techniques developed
 in this paper for Poisson observation models.
}

  {One important extension is to consider the case where one can control the process in both directions. This scenario has been studied in the continuous-observation case driven by spectrally negative Lévy processes, as discussed in \cite{Baurdoux_Yamazaki}, where it is shown that it is optimal to reflect the process at both upper and lower barriers.}

 {Another natural extension is to consider the case with a fixed intervention cost. In this case, the optimal strategy is expected to be of the two-barrier type. More specifically, it is expected to be a variant of the $(s,S)$-policy (see, e.g., \cite{Benkerouf_Bensossan, Bensoussan_Liu_Sethi}), which moves the process to a certain point, say $\bar{b}$, whenever it is observed to be below a different point, say $\underline{b}$, at Poisson observation times. This is a reasonable conjecture based on Yamazaki \cite{Yam2017}, who showed the optimality of such a policy for the continuous-observation case under a spectrally one-sided Lévy model.}

%If your paper includes appendices, then precede the first of them by the command
\appendix
%and then carry on using the \section and \subsection commands, as above.

\section{Proofs} \label{appendix_proof}

\subsection{The proof of Lemma \ref{Lem301}}\label{SecA01}
%Among the conditions of the admissibility, the one that is not clear is the finiteness condition \eqref{1}. 
We fix $b, x\in \bR$. {Let $U^{b,\infty}$ and $R^{b,\infty}$ be those defined in Section \ref{section_convergence} for the controlled and control processes in the classical setting under the barrier strategy with barrier $b$. }
%We prove 
%\begin{align}
%\E_x \left[ \int_0^\infty e^{-qt}{|f({U^b_t  })|}\diff t\right]<\infty, \label{2}
%\end{align}
%and
%\begin{align}
%\E_x \left[ \int_{[0, \infty)} e^{-qt}\diff R^b_t\right]<\infty, \label{30}
%\end{align}
%respectively. 
%Maybe it would be cleaner to write like this? 
%\blue{[to be consistent, use $\infty$ instead of tilde.]}
First we have a bound
\begin{align}
X_t \leq U_t^b \leq U_t^{b,\infty},\quad 0 \leq R^b_t \leq R^{b,\infty}_t.
\label{31}
\end{align}
%where $U_t^{b,\infty} = X_t + R^{b,\infty}_t$ is the classical reflected process at $b$ with $R^{b,\infty}_t = \sup_{s\in[0, t]} (b-X_s)^+$. 
%\begin{align}
%\tilde{U}_t^b= X_t + {\sup_{s\in[0, t]} (b-X_s )\lor 0}, \qquad t\geq 0.
%\end{align}}
  By the convexity of $f$, we have
$|f(U_t^b)| \leq |f(U_t^{b,\infty})| + |f(X_t)| + c$
where
\begin{align}\label{35}
c=
\begin{cases}
|\inf_{y\in \R}f(y) |, \quad &\text{ if it exists}, \\
 0, \quad &\text{ otherwise}.
 \end{cases}
\end{align}
By Remark \ref{Rem201} and \cite[(A.3)]{NobYam2020} under Assumptions \ref{Ass201} and \ref{Ass202}, we obtain {(i)}.
%{
%\begin{align}
%\E_x \left[ \int_0^\infty e^{-qt}{|f({U^b_t  })|}\diff t\right]<\infty.
%\label{2} 
%\end{align}}
By \eqref{31} and since \cite[Lemma 3]{NobYam2020} {holds} under Assumptions \ref{Ass201} and \ref{Ass202}, we obtain {(ii).} 

%By \eqref{31} \blue{and using $c$ in Section \ref{SecA01}}, we have 
%\blue{[moved here]} 
We have
\begin{align}
|v_b(x)|\leq \E_x \left[ \int_0^\infty e^{-qt} (|f(X_t)| + |f(U^{b, \infty}_t)| {+c}) \diff t\right]
+|C| \bE_x \left[ \int_{[0, \infty)}e^{-qt} \diff R^{b,\infty}_t\right], \label{34}
\end{align}
%The right hand side of \eqref{34} is
which is of
 polynomial growth by Remark \ref{Rem201} and the proof of \cite[Lemma 3]{NobYam2020}, {showing (iii).}

\subsection{Proof of Lemma \ref{Lem302}} \label{proof_Lem302}
Note that $T_b=T(k)$ for some $k\in\bN$, almost surely on $\{T_b < \infty\}$.
By the monotone convergence theorem and  the strong Markov property, we have 
\begin{align}
&\lim_{\varepsilon\downarrow 0}\bP_x \left(T_b= T_{b+\varepsilon}  , T_b < \infty \right)=\lim_{\varepsilon\downarrow 0}\sum_{k\in\bN}\bP_x \left(T_b= T_{b+\varepsilon}=T(k)  \right)\\
&=\sum_{k\in\bN}\lim_{\varepsilon\downarrow 0}\bP_x \left(X_{T(1)}\geq b+\varepsilon, X_{T(2)}\geq b+\varepsilon, \dots, 
X_{T(k-1)}\geq b+\varepsilon, X_{T(k)}<b \right)\\
&=\sum_{k\in\bN} \bP_x \left(X_{T(1)} >  b, X_{T(2)} >  b, \dots, 
X_{T(k-1)} > b, X_{T(k)}<b \right)\\
&=%\sum_{k\in\bN}\bE_x \left[1_{\{X_{T(1)}> b\}}\bE_{X_{T(1)}}\left[ 1_{\{X_{T(1)}> b\}} \bE_{X_{T(1)}} \left[\cdots 
%1_{\{X_{T(1)}> b\}} \bP_{X_{T(1)}}\left( X_{T(1)} <b \right) \cdots \right]\right]\right].\label{25}
\sum_{k\in\N} E^{(k)}(x), \label{25}
\end{align}
%{[How about changing the last expression above to:]
%\begin{align}
%=\sum_{k\in\N} E^{(k)} \bP_{\cdot}(X_{T(1)}<b) (x), 
%\end{align}
%where for non-negative measurable function $g$, we define 
%\begin{align}
%E^{(1)} g(x) = \bE_x \left[1_{\{X_{T(1)} >b\}} g(X_{T(1)})\right]
%,\quad E^{(l+1)} g(x) = \bE_x \left[1_{\{X_{T(1)} >b\}} E^{(l)}g(X_{T(1)})\right]
%,\quad l\in\N. 
%\end{align}
%}
{%[How about]
%\begin{align}
%=\sum_{k\in\N} E^{(k)}(x)
%\end{align}
where %for non-negative measurable function $g$, we define 
\begin{align}
E^{(1)} (x) = \bP_x \left( X_{T(1)} <b \right)
,\quad E^{(l+1)}(x) = \bE_x \left[1_{\{X_{T(1)} >b\}} E^{(l)}(X_{T(1)})\right]
,\quad l\in\N. 
\end{align}
}

%where the last equaity holds by the strong Markov property. 
%we have 
%\begin{align}
%\sum_{k\in\bN}
%\bP_x& \left(X_{T(1)}> b, X_{T(2)}> b, \dots, 
%X_{T(k-1)}> b, X_{T(k)}<b \right)\\
%&=\sum_{k\in\bN}\bE_x \left[1_{\{X_{T(1)}> b\}}\bE_{X_{T(1)}}\left[ 1_{\{X_{T(1)}> b\}} \bE_{X_{T(1)}} \left[\dots 
%1_{\{X_{T(1)}> b\}} \bP_{X_{T(1)}}\left[ X_{T(1)} <b \right] \dots \right]\right]\right].\label{25}
%\end{align}
%Since the potential of $X$ has no atoms by Assumption \ref{Ass202}(1) and \cite[Proposition I.15]{Ber1996}, we have
%%\begin{align}
%%\bP_y (X_{T(1)}\geq b)-\bP_y (X_{T(1)}>b)=
%$\bP_y (X_{T(1)}= b)
%={\eta}\bE_y \left[ \int_0^\infty e^{-{\eta}t} 1_{\{X_t =b\}} \diff t\right]=0$ % \label{26}
%%\end{align}
% for $y\in\bR$.
%Hence,

{By Remark \ref{remark_mass},} 
 with $\{X_{T(1)}> b\}$ replaced by $\{X_{T(1)} \geq b\}$ {in the definition of $E^{(k)}$} and going backwards from \eqref{25}, we have
%{From \eqref{24}, \eqref{25} and \eqref{26}}, we have 
%\begin{align}
$\lim_{\varepsilon\downarrow 0}\bP_x \left(T_b= T_{b+\varepsilon} , T_b < \infty  \right)
%&= \sum_{k\in\bN}\bE_x \left[1_{\{X_{T(1)}\geq  b\}}\bE_{X_{T(1)}}\left[ 1_{\{X_{T(1)}\geq b\}} \bE_{X_{T(1)}} \left[\dots 
%1_{\{X_{T(1)}\geq  b\}} \bP_{X_{T(1)}}\left[ X_{T(1)} <b \right] \dots \right]\right]\right]\\
%&=\sum_{k\in\bN}\bP_x \left(X_{T(1)}\geq b, X_{T(2)}\geq b, \dots 
%X_{T(k-1)}\geq b, X_{T(k)}<b \right)\\
=\sum_{k\in\bN}\bP_x \left( T_b=T(k)\right)=\bP_x \left( T_b < \infty  \right)$. 

%\blue{[Maybe we can just say the following holds similarly?]}
%{[It's almost the same, but there are some differences. In the argument above, we had ``$\lim_{\varepsilon\downarrow 0} 1_{\{ X_{T(1)} \geq b+\varepsilon\}}=1_{\{ X_{T(1)} > b\}}$'' 
%in the integrals and used the fact about the potential to get \eqref{a001}. 
%However, in the following argument, we had $\lim_{\varepsilon\downarrow0}\bP_{X_{T(1)}}\left[ X_{T(1)} <b-\varepsilon \right]=\bP_{X_{T(1)}}\left[ X_{T(1)} <b \right]$ in the integrals and we did not use the argument about the potential. 
%The part using the monotone convergence theorem is the same. 
%]} \blue{I see. Lets keep these computations for now.}
On the other hand, by the monotone convergence theorem, %for $\varepsilon>0$, we have
\begin{align}
\lim_{\varepsilon\downarrow 0}\bP_x &\left(T_b= T_{b-\varepsilon}, T_b<\infty \right)
%= \lim_{\varepsilon\downarrow 0}\sum_{k\in\bN}\bP_x \left(T_b= T_{b-\varepsilon}=T(k)  \right)\\
\\
&=\sum_{k\in\bN} \lim_{\varepsilon\downarrow 0} \bP_x \left(X_{T(1)}\geq b, X_{T(2)}\geq b, \dots, 
X_{T(k-1)}\geq b, X_{T(k)}<b-\varepsilon \right)\\
%&= \sum_{k\in\bN}\bE_x \left[1_{\{X_{T(1)}\geq b\}}\bE_{X_{T(1)}}\left[ 1_{\{X_{T(1)}\geq b\}} \bE_{X_{T(1)}} \left[\dots 
%1_{\{X_{T(1)}\geq b\}} \bP_{X_{T(1)}}\left[ X_{T(1)} <b-\varepsilon \right] \dots \right]\right]\right]. 
&= \sum_{k\in\bN}\bP_x \left(X_{T(1)}\geq b, X_{T(2)}\geq b, \dots, 
X_{T(k-1)}\geq b, X_{T(k)}<b \right)  =\bP_x (T_b<\infty). 
\end{align}
%By the same argument as above with monotone convergence theorem, we have 
%\begin{align}
%&\lim_{\varepsilon\downarrow0}\bP_x \left(T_b =T_{b-\varepsilon}, T_b<\infty\right)\\
%&=\sum_{k\in\bN}\bP_x \left(X_{T(1)}\geq b, X_{T(2)}\geq b, \dots X_{T(k-1)}\geq b, X_{T(k)}<b \right)\\
%&=\sum_{k\in\bN}\bP_x \left( T_b=T(k)\right)= \bP_x \left(T_b< \infty \right).
%\end{align}
%The proof is complete. 
%Since $\lim_{\varepsilon\to0}\bP_x \left(T_b= T_{b+\varepsilon}, T_b<\infty \right)=\bP_x (T_b<\infty)$ for $x\in\bR$ by the above discussion and since the map $b\mapsto T_b$ is nonincreasing, the proof is complete. 
{Finally, because the map $b\mapsto T_b$ is nonincreasing, the proof is complete by monotone convergence. }
%\blue{[later the proof can be moved to the appendix.]}
%{[I think we should move it too.]}
%\end{proof}

\subsection{Proof of Lemma \ref{Lem401}} \label{proof_lem401}
%\blue{[this proof can be moved to the appendix.]}
%{[I agree with you.]}
Since $f^\prime_+$ is nondecreasing, % and since we have, for $b\in\bR$,
%\begin{align}
%\rho(b)=\bE_0 \left[\int_0^\infty e^{-qt} f^\prime_+ (U^0_t+b) \diff t \right], 
%\end{align}
the function {$\rho(b) = \bE_0 \left[\int_0^\infty e^{-qt} f^\prime_+ (U^0_t+b) \diff t \right]$} is nondecreasing. 
By {Corollary} \ref{Lem303}, monotone convergence, and Assumption \ref{Ass201} (3), we have $\lim_{b \uparrow\infty} \rho(b)= f^\prime_+(\infty) /q > -C$ and $\lim_{b \downarrow-\infty} \rho(b)= f^\prime_+(-\infty) /q< -C$. In what follows, we show the continuity of $\rho$. 

(i) We first prove that the potential of the process $U^b$ does not have mass. Recall the first control time  $T_b = T_b^{(1)}$.
% as in \eqref{first_control_time}{[We will delete \eqref{first_control_time}?]}. 
%We define inductively the subsequent control times by  $T^{[0]}_b=0$, $T^{[1]}_b=T_b$ and 
%We write 
%\begin{align}
%T^{[k]}_b :=\inf \{ t>T^{[k-1]}: R^b_t >R^b_{T^{[k-1]}}\}, \quad k\in\bN. 
%\end{align}
%{Note that $U_{T^{(k)}_b}^b = b$ on $\{T^{(k)}_b < \infty \}$ for $k \geq 1$.} \blue{[actually $T_b^{(k)}$ has already been defined. Just use this?]}
%{[That's right. I have erased the definition of $T_b^{[k]}$ and unified it with $T_b^{(k)}$. ]}
%where $T^{[0]}_b=0$. Note that $T^{[1]}_b=T_b$. 
For $x, y, b\in\bR$, we have 
\begin{align}
&\bE_x\left[\int_0^\infty e^{-qt} 1_{\{y\}} (U^b_t) \diff t\right]
=\sum_{k\in\bN}\bE_x \left[\int_{T^{(k-1)}_b}^{T^{(k)}_b}e^{-qt} 1_{\{y\}}(U^b_t) \diff t\right] \\
&\qquad\qquad=\bE_x \left[\int_0^{T_b}e^{-qt} 1_{\{y\}}(X_t) \diff t\right]
\\
&\qquad\qquad\qquad+\sum_{k\in\bN}\bE_x\left[e^{-q T_b}\right]{\left( \bE_b\left[e^{-qT_b}\right]\right)}^{k-1}\bE_b \left[\int_0^{T_b}e^{-qt} 1_{\{y\}}(X_t) \diff t\right],
\end{align}
which is equal to $0$ {by Remark \ref{remark_mass}.} 
%since the potential of $X$ does not have mass by Assumption \ref{Ass202}(1) and \cite[Proposition I.15]{Ber1996}. 
%\blue{[We assume $X$ is not a compound Poisson process? Is this all we need?]} 
%{[We assumed in Assumption \ref{Ass202}. I think we can prove the main theorem without the driftless condition. However, the continuity of $b\mapsto T_b$ may be vanished without this condition.]} \blue{[ok. Lets just add a reference containing the result that it does not have a mass when it is not a compound Poisson process.]}
%{[I added.]}
\par
(ii) %We prove that the function $\rho$ is continuous. 
Since $f^\prime_+$ is right-continuous and by the dominated convergence theorem {with Corollary \ref{Lem303}}, we have 
\begin{align}
\rho(b+\varepsilon)-\rho (b)
=\bE_0 \left[\int_0^\infty e^{-qt} (f^\prime_+ (U^0_t+b+\varepsilon)-f^\prime_+(U^0_t+b))  \diff t \right]
\xrightarrow{\varepsilon \downarrow 0} 0.
\end{align}
%as $\varepsilon \downarrow 0$. 
Let $D$ %\blue{[just change this to $D$?]} 
be the set of discontinuous point of $f^\prime_+$ {on $\R$}, which is 
%Then $D_{f^\prime_+}$ is 
{at most} countable set since $f^\prime_+$ is nondecreasing. 
By {Corollary} \ref{Lem303} and the dominated 
%\blue{monotone?}{[Since $f^\prime_+ (y) 1_{\{y\}}(U^b_t) $ is non-negative and decreasing w.r.t. $\varepsilon$. Thus, I guess we need to dominate using Lemma \ref{Lem303}. So, I think both is fine.]
convergence theorem, we have 
\begin{align*}
\rho(b)-\rho (b-\varepsilon)
&={\bE_0 \left[\int_0^\infty e^{-qt} (f^\prime_+ (U^0_t+b)-f^\prime_+(U^0_t+b-\varepsilon))  \diff t \right]} \\
&\xrightarrow{\varepsilon \downarrow 0} {\bE_0 \left[\int_0^\infty e^{-qt} \sum_{y\in D  }
(f^\prime_+ (y)-f^\prime_- (y)) 1_{\{y\}}(U^0_t+b)  \diff t \right]} \\ &{= \bE_b \left[\int_0^\infty e^{-qt} \sum_{y\in D  }
(f^\prime_+ (y)-f^\prime_- (y)) 1_{\{y\}}(U^b_t)  \diff t \right], }
\end{align*}
which is equal to $0$ since the potential of $U^b$ does not have mass as in (i). 

%\subsection{Proof of Lemma \ref{lemma_poly_growth}} \label{proof_lemma_poly_growth}
%\blue{[added]}

%{[For using the above identity, we need to prove 
%\begin{align}
% \bE_x \left[\int_0^\infty e^{-qt}|f^\prime_+ (X_t)|\diff t \right]. 
% \end{align}
% However, to prove this, we would need to do the same as in the proof of \cite[Lemma 3.3]{NobYam2020}. 
%]}
%
%{The proof of Lemma \ref{Lem303} is a direct corollary of \cite[Lemma 3.3]{NobYam2020}; it is given in Appendix \ref{Sec00A}. }
%{[I think we can delete Appendix \ref{Sec00A}. How about the following?] 
%We can prove Lemma \ref{Lem303} using Lemma \ref{Lem301}. We omit the proof since the proof is the same as that of \cite[Lemma 3.3]{NobYam2020}.} \blue{[I think we can delete the proof, but a slight modification of  \cite[Lemma 3.3]{NobYam2020} is necessary? Essentially, what we just need to say is that our $U^b$ is bounded from below by $X$ and bounded from above the classical reflected process $U^{b,\infty}$. We can just add a line.]}
%%\blue{[probably not exactly the same? Just say similar?]}
%%{[I added Appendix \ref{Sec00A}.]} \blue{what is $\tilde{U}$ in the proof?}
%{[Using the convexity of $f$ and Lemma \ref{Lem301}, I guess that the sentence written in the proof of A is an exact proof. If we use the boundedness method with $X$ and the resulting process $U^b$ of classical barrier strategy, we need to prove $\bE_x \left[\int_0^\infty e^{-qt}f^\prime_+ (X_t)\diff t \right]<\infty$, and for that we need to give the same proof for $X$ as for \cite[Lemma 3.3]{NobYam2020}.]}

\subsection{Proof of Lemma \ref{Lem502}} \label{proof_Lem502}
{
The proof is essentially the same as that of \cite[Lemma 9]{NobYam2020} by simply replacing the classical reflected process $U^{b^*,\infty}$ with the Poissonian version $U^{b^*}$. Following the same arguments, we obtain 
%\begin{align}
$v_{b^\ast}^{\prime\prime}(x)=\bE_x \left[ \int_0^{T_{b^\ast}} e^{-qt} f^{\prime\prime}(U^{b^\ast}_t) \diff t \right]$, 
%=\bE_0 \left[ \int_0^{T_{b^\ast-x}^-} e^{-qt} f^{\prime\prime}(U^{b^\ast-x}_t) \diff t \right],
%\end{align}
which can be shown to be continuous by the dominated convergence theorem using the assumption that $f''$ is of polynomial growth, \eqref{14},  and Lemma \ref{Lem302}. For more details, see \cite[Section A.6]{NobYam2020}.
%the function $v_{b^\ast}^{\prime\prime}(x)$ is continuous. 
}

\section*{Acknowledgements}
K. Noba stayed at Centro de Investigaci\'on en Matem\'aticas in Mexico as a JSPS Overseas Research Fellow and received support regarding the research
environment there. K. Noba is grateful for their support during his visit. 
K. Noba was supported by JSPS KAKENHI grant no.\ JP21K13807. 
K. Yamazaki was supported by JSPS KAKENHI grant no.\ JP20K03758, JP24K06844 and JP24H00328
and
 the start-up grant by the School of Mathematics and Physics of the University of Queensland. Both authors were supported by JSPS Open Partnership Joint Research Projects grant no. JPJSBP120209921 and JPJSBP120249936.

\end{document}